\newcommand{\R}{\mathbb{R}}
\newcommand{\DD}{\mathcal{D}}
\newcommand{\Z}{\mathbb{Z}}
\newcommand{\N}{\mathbb{N}}
\newcommand{\C}{\mathbb{C}}
\newcommand{\LL}{\mathcal{L}}
\newcommand{\F}{{\mathcal F}}
\newcommand{\po}{\partial}
\newcommand{\wto}{\rightharpoonup}
\newcommand{\ve}{\varepsilon}
\newcommand{\la}{\langle}
\newcommand{\ra}{\rangle}
\newcommand{\loc}{{\text{\rm loc}}}
\newcommand{\X}{\times}
\renewcommand{\O}{{\mathbf O}}
\renewcommand{\d}{\delta}
\renewcommand{\l}{\lambda}
\renewcommand{\a}{\alpha}
\renewcommand{\b}{\beta}
\newcommand{\s}{\sigma}
\newcommand{\g}{\gamma}
\newcommand{\z}{\zeta}
\renewcommand{\k}{\kappa}
\newcommand{\sgn}{\text{\rm sgn}}
\newcommand{\Om}{\Omega}
\newcommand{\om}{\omega}
\newcommand{\supp}{\text{\rm supp}\,}
\newcommand{\M}{{\mathcal M}}
\renewcommand{\div}{\text{\rm div}\,}
\renewcommand{\supp}{\text{\rm supp}\,}
\newcommand{\cO}{{\mathcal O}}
\newcommand{\cL}{{\mathcal L}}
\newcommand{\cP}{{\mathcal P}}
\newcommand{\bff}{{\mathbf f}}
\newcommand{\bfa}{{\mathbf a}}
\newcommand{\fbf}{\mathbf{f}}
\newcommand{\Abf}{\mathbf{A}}
\newcommand{\abf}{\mathbf{a}}
\newcommand{\Lip}{\text{\rm Lip}}
\newcommand{\B}{{\mathcal B}}
\newcommand{\bbE}{{\mathbb E}}
\newcommand{\bbT}{{\mathbb T}}
\newcommand{\bbP}{{\mathbb P}}
\newcommand{\bbG}{{\mathbb G}}
\newcommand{\BAP}{\operatorname{BAP}}
\newcommand{\AP}{\operatorname{AP}}
\newcommand{\BUC}{\operatorname{BUC}}
\renewcommand{\Lip}{\text{Lip\,}}
\newcommand{\intl}{\int\limits}
\newcommand{\iintl}{\iint\limits}
\newcommand{\ff}{\mathfrak{f}}
\newcommand{\mm}{\mathfrak{m}}
\newcommand{\G}{{\mathcal G}}
\newcommand{\Me}{\operatorname{M}}
\newcommand{\Sp}{\operatorname{Sp}}
\newcommand{\Gr}{\operatorname{Gr}}
\newcommand{\Medint}{\mkern12mu\mbox{\vrule height4pt
         depth-3.2pt
          width5pt}\mkern-16.5mu\int\nolimits}
\theoremstyle{plain}
\newtheorem{theorem}{Theorem}[section]
\newtheorem{corollary}{Corollary}[section]
\newtheorem{lemma}{Lemma}[section]
\newtheorem{proposition}{Proposition}[section]
\theoremstyle{definition}
\newtheorem{definition}{Definition}[section]
\theoremstyle{remark}
\newtheorem{remark}{Remark}[section]
\numberwithin{equation}{section}
\begin{document}

\title[Invariants measures for stochastic conservation laws]
{Invariant measures for stochastic\\ parabolic-hyperbolic equations\\  in the space of 
 almost periodic functions: Lipschitz flux case}

\author[C. Espitia]{Claudia Espitia}
\thanks{C.~Espitia thankfully acknowledges the support from CNPq, through grant proc.\ 140268/2019-7}
\address{Instituto de Matem\'atica Pura e Aplicada - IMPA\\
         Estrada Dona Castorina, 110 \\
         Rio de Janeiro, RJ 22460-320, Brazil}
\email{claudia.duarte@impa.br}

\author[H.~Frid]{Hermano Frid}
\thanks{H.~Frid gratefully acknowledges the support from CNPq, through grant proc.\ 305097/2019-9, and FAPERJ, through grant proc.\ E-26/200.958/2021.}
\address{Instituto de Matem\'atica Pura e Aplicada - IMPA\\
         Estrada Dona Castorina, 110 \\
         Rio de Janeiro, RJ 22460-320, Brazil}
\email{hermano@impa.br}

\author[D.~Marroquin]{Daniel Marroquin}
\thanks{D.~Marroquin thankfully acknowledges the support from CNPq, through grant proc.\ 150118/2018-0.}

\address{Instituto de Matem\'{a}tica - Universidade Federal do Rio de Janeiro\\
Cidade Universit\'{a}ria, 21945-970, Rio de Janeiro, Brazil}
\email{marroquin@im.ufrj.br}

\keywords{stochastic partial differential equations, scalar conservation laws, invariant measures} 
\subjclass{Primary: 60H15, 35L65, 35R60}
\date{}

\begin{abstract} We study the well-posedness and the long-time behavior of almost periodic solutions to
stochastic degenerate parabolic-hyperbolic equations in any space dimension, under the assumption 
of Lipschitz continuity of the flux  and viscosity functions and a non-degeneracy condition. We show the existence and uniqueness of an invariant measure in a separable subspace of the space of   Besicovitch almost periodic functions.
\end{abstract}

\maketitle

\section{Introduction}\label{S:1}
We  study the well-posedness of the Cauchy problem and the existence and uniqueness of  invariant measures for  stochastic nonlinear degenerate parabolic-hyperbolic equations in the space of Besicovitch almost periodic
functions. Namely, we consider an equation of the form 
\begin{equation}\label{e1.1}
du+\div(\bff(u))\,dt-D^2:\Abf(u)\,dt=\Phi\,d W(t), 
\end{equation}
where  $\bff=(f_1,\cdots,f_N) \in C^2\cap \Lip(\R;\R^N)$,   $\Abf\in C^2\cap\Lip(\R;\R^{N\X N})$, such that $\Abf(u)=(A_{ij}(u))$,  is a symmetric $N \X N$ matrix with  $\Abf'(u):=\abf(u)$  symmetric nonnegative, and we denote $D^2: \Abf(u):=\sum_{i,j=1}^N\po_{x_i x_j}^2 A_{ij}(u)$.    The initial function is given as 
\begin{equation}\label{e1.2}
u(0,x)=u_0(x),\quad x\in\R^N.
\end{equation} 

We recall that the space of real-valued almost periodic functions in $\R^N$, $\AP(\R^N)$,  is the closure in $C_b(\R^N)$, endowed with the $\sup$-norm, of the finite linear combinations of the trigonometric functions $\cos 2\pi\l\cdot x$ and $\sin 2\pi\l\cdot x$, $\l \in\R^N$, or, equivalently, the real part of the closure in $C_b(\R^N;\C)$ of the complex space spanned by  $\{ e^{i2\pi\l\cdot x}\,:\, \l\in\R^N\}$. It is well known (see, e.g.,  \cite{B}) that   
$\AP(\R^N)$ is a sub-algebra of the space of bounded uniformly continuous functions $\BUC(\R^N)$, whose elements
$g$ possess a mean value $\Me(g)$ which is a number such that $g(\ve^{-1}x)\wto \Me(g)$, in the sense of the weak star convergence in $L^\infty(\R^N)$,  and  can be defined by
$$
\Me(g):= \lim_{R\to\infty} R^{-N}\int_{C_R}g(x)\,dx,
$$
where
$$
C_R:=\{x=(x_1,\cdots,x_N)\in\R^N\,:\, |x_i|\le R/2\}
$$
and it holds $\Me(g)=\Me(g(\cdot+\l))$, for any $\l\in\R^N$. We also denote
$$
\Me(g)=\Medint_{\R^N} g(x)\,dx.
$$
Given $g\in\AP(\R^N)$, the spectrum of $g$, $\Sp(g)$, is defined by
$$
\Sp(g):=\{\l\in\R^N\,:\, a_\l:=\Medint_{\R^N} e^{-i2\pi \l\cdot x}g(x)\,dx\ne0\}.
$$   
 It is a well known fact that $\Sp(g)$ is a countable set, which follows easily from Bessel's inequality when we introduce in $\AP(\R^N)$ the inner product 
 $$
 \la g,h\ra:=\Me(gh).
 $$
For $g\in\AP(\R^N)$, we denote by $\Gr(g)$ the smallest additive group containing $\Sp(g)$. The Besicovitch space $\B^1(\R^N)$ is defined as the abstract completion of $\AP(\R^N)$ by the norm
$$
N_1(u):=\sup_{R\to\infty}\frac1{R^N}\int_{C_R}|u(x)|\,dx=\Medint_{\R^N}|u(x)|\,dx,
$$ 
A classical procedure going back to Besicovitch (see \cite{B}) shows that any $g\in\B^1(\R^N)$ has a representative in $L_\loc^1(\R^N)$. 
 
It is also a well known fact  that $\AP(\R^N)$ is isometrically isomorphic with the space $C(\bbG_N)$, where $\bbG_N$ is the so called Bohr compact which is a compact topological group (see, e.g., \cite{DS,Loo}). The Haar measure on $\bbG_N$, $\mm$, such that $\mm(\bbG_N)=1$, is the measure induced by the  mean value $g\mapsto \Me(g)$ defined for all $g\in\AP(\R^N)\sim C(\bbG_N)$.  The topology 
in $\bbG_N$ is generated by the images through the referred isomorphism  (also called Gelfand transforms) of the functions $e^{-i2\pi\l\cdot x}$. The isometric isomorphism between $\AP(\R^N)$ and $C(\bbG_N)$ extends to an isometry between $\B^1(\R^N)$, with the norm given by $N_1$, and $L^1(\bbG_N)$.  
Here and elsewhere in what follows, although we are mainly dealing with real functions,  we switch freely between the real and  the complex version of $\AP(\R^N)$ whenever we want to take advantage of the fact that the latter is generated by the complex exponentials  $e^{-i2\pi\l\cdot x}$.  The translations $\tau_y:\R^N\to\R^N$, $\tau_yx=x+y$,
$y\in\R^N$, extend as homeomorphisms $\tau_y:\bbG_N\to\bbG_N$. Therefore, we can define directional derivatives $D_y g(x)$ of functions  $g\in C(\bbG_N)$ at a point $x\in\bbG_N$, for $y\in\R^N$, $|y|=1$,  by the usual formula, $D_yg(x):=\lim_{|h|\to0} (g(x+hy)-g(x))/h$ whenever the limit exists. In particular, when $y=e_i$, where $e_i$ is the $i$-th element of the canonical basis,
we get the partial derivatives $D_i g(x)$ or $\po_{x_i}g(x)$, or yet $\po g(x)/\po x_i$, $i=1,\cdots,N$.  We then denote by $C^\ell(\bbG_N)$ the space of functions in $C(\bbG_N)$ whose derivatives up to order $k$ also belong to $C(\bbG_N)$, for $\ell\in\N\cup\{0,\infty\}$. It is easy to see that $C^\ell(\bbG_N)$ is isometrically isomorphic with $\AP^\ell(\R^N)$, where the latter is the subspace of $\AP(\R^N)$ whose derivatives up to the order $\ell$ are in $\AP(\R^N)$, for  $\ell\in\N\cup\{0,\infty\}$.

 As usual, if  $(\Om, \F, \F_t, \bbP)$ is a stochastic basis, where $(\F_t)_{t\ge0}$ is a complete filtration,  $W$ is a cylindrical Wiener process, $W(t)=\sum_{k\ge1}\b_k(t) e_k$, where $\b_k(t)$ are independent Brownian processes with respect to the filtration $(\F_t)$, and $\{e_k\}_{k\ge1}$ is a complete orthonormal system in a Hilbert space $H$.  The map $\Phi: H\to L^2(\bbG_N)$ is defined by $\Phi e_k=g_k$ where $g_k\in C^2(\bbG_N)\sim \AP^2(\R^N)$. We assume that there exists a sequence of positive numbers $(\a_k)_{k\ge1}$ satisfying $D_0:=\sum_{k\ge1}\a_k^2<\infty$ such that 
\begin{equation}\label{e1.3-0}
|g_k(x)|+|\nabla_x g_k(x)|+|D^2g_k(x)|\le \a_k,\quad \forall x\in\R^N.
\end{equation}
Observe that from \eqref{e1.3-0} it follows 
\begin{align}
& G^2(x)=\sum_{k\ge1}|g_k(x)|^2\le D_0, \label{e1.3}\\
&\sum_{k\ge1}|g_k(x )-g_k(y)|^2\le D_0 |x-y|^2, \label{e1.4}
\end{align}
for all $x,y\in \R^N$.

 Existence and uniqueness of a kinetic solution to the initial value problem for \eqref{e1.1} in the periodic case is proved by Debussche, Homanov\'a and Vovelle in  \cite{DHV}, for initial data in $\bigcap_{p\ge1}L^p(\bbT^N)$  and by Gess and Homanov\'a in \cite{GH}, for initial data in $L^1(\bbT^N)$. Moreover, both \cite{DHV} and \cite{DV} show  that, given two
initial data $u_0^1,u_0^2\in L^1(\bbT^N)$, the corresponding kinetic solutions satisfy the following contraction property:
\begin{equation}\label{e1.4P}
\|u^1(t)-u^2(t)\|_{L^1(\bbT^N)}\le \|u_0^1-u_0^2\|_{L^1(\bbT^N)}, \ a.s.
\end{equation}
This allows the definition of the transition semigroup in ${\mathcal B}_b(L^1(\bbT^N))$, the latter being the space of bounded Borel functions on $L^1(\bbT^N)$, by
$$
P_t\phi(u_0)= \bbE(\phi(u(t))). 
$$
The existence and uniqueness of an invariant measure with respect to $P_t$ in $L^1(\bbT^N)$ has been proven by Debussche and Vovelle in \cite{DV2} for the case of stochastic conservation laws and extended by Chen and Pang in 
\cite{CPa} for the case of stochastic degenerate parabolic-hyperbolic equations, both under the assumption that the noise coefficients (and, without loss of generality, also the initial data) have zero spatial mean-value.
  
Here, we first address the well-posedness of  Besicovitch almost periodic ($\BAP$, for short) entropy solutions of \eqref{e1.1}-\eqref{e1.2}. This notion is motivated by  Kim's idea in \cite{KJ} of 
defining the new dependent variable $w=u-J$, with $J(t,x):=\sum_{k=1}^\infty g_k(x)\b_k(t)$, and transforming \eqref{e1.1}--\eqref{e1.2} into a deterministic problem for each fixed 
$\om\in\O$.  Based on a Kruzhkov-type inequality established by Chen and Karlsen in \cite{CK}, we prove that the $\BAP$-entropy solutions satisfy a $L^1$-mean semi-contraction property, that is, given two 
$\BAP$-entropy solutions, $u,v$, we have, a.s.,
$$
\Medint_{\R^N}|u(t,x)-v(t,x)|\,dx\le C(t) \Medint_{\R^N}|u_0(x)-v_0(x)|\,dx,
$$ 
for some constant $C(t)$ depending on $t$, $\fbf, \Abf$, and possibly also on $\om$.  
We then define $L^1(\bbG_N)$-entropy solutions as a natural extension of $\BAP$-entropy solutions,  through the isometric isomorphism $\B^1(\R^N)\sim L^1(\bbG_N)$,    and show the existence of such solutions as a consequence of the existence of $\BAP$-entropy solutions, using the semi-contraction property above. Unfortunately, we cannot assert the uniqueness of $L^1(\bbG_N)$-entropy solutions, in general. Nevertheless, we prove that given two $L^1(\bbG_N)$-entropy solutions $u,v$, both obtained
as limits in $L^1(\Om; L^\infty([0,T];L^1(\bbG_N)))$  of $\BAP$-entropy solutions,   we have, a.s., 
$$
\int_{\bbG_N}|u(t,z)-v(t,z)|\,d\mm(z)\le C(t)\int_{\bbG_N}|u_0(z)-v_0(z)|\,d\mm(z),
$$
for the same constant $C(t)$ as above. Here we call $L^1(\bbG_N)$-semigroup solution an $L^1(\bbG_N)$-entropy solution  obtained  as limit in $L^1(\Om; L^\infty([0,T];L^1(\bbG_N)))$ of a sequence of $\BAP$-entropy solutions. The existence of such solutions is also proved here.
So, differently from the hyperbolic case analyzed in \cite{EFM}, in the present degenerate parabolic-hyperbolic case we do not have a proper contraction, and so  this does not allow us  in principle to define a contractive  transition semigroup as in the periodic case.   However, when we restrict ourselves to  a separable subspace of $\AP(\R^N)$, $\AP_*(\R^N)$, whose elements have spectrum contained in a fixed  finitely generated additive group, it has been  shown in \cite{EFM} that there is an  isometric isomorphism between $\B_*^1(\R^N)$, the corresponding Besicovitch space, 
and $L^1(\bbT^P)$, where $P$ is the cardinality of the set of generators of the additive group containing
the spectrum of the functions in $\AP_*(\R^N)$. Let $\bbG_{*N}$ be the compact associated with the algebra $\AP_*(\R^N)$, which is a finitely generated topological subgroup of $\bbG_N$, so $\AP_*(\R^N)\sim C(\bbG_{*N})$, and $\B_*^1(\R^N)\sim L^1(\bbG_{*N})$. Using also the idea of reduction to the periodic case introduced by Panov in \cite{Pv}, it then follows  the contraction property for $L^1(G_{*N})$-semigroup solutions, namely, given any two such solutions $u,v$, a.s., it holds
$$
\int_{\bbG_{*N}}|u(t,z)-v(t,z)|\,d\mm(z)\le \int_{\bbG_{*N}}|u_0(z)-v_0(z)|\,d\mm(z).
$$
Using this contraction property, we can then define the contractive transition semigroup as  in the periodic case in \cite{DV2,CPa} and prove the existence and uniqueness of an invariant measure, provided we assume a suitable non-degeneracy condition (see \eqref{e5.NDC1} and \eqref{e5.NDC}), and also assuming, as in \cite{CPa,DV2}, that the noise coefficients have zero spatial mean-value, that is, 
\begin{equation}
\Medint_{\mathbb{R}^N}g_k(x)\, dx = 0,\quad \text{for all $k\ge 1$.}
\end{equation}

\subsection{Main results}
 The purpose of this paper is to extend the results in \cite{DV2} and  \cite{CPa} to a more general class of oscillatory solutions, at least in the case of Lipschitz flux and viscosity functions. 
 
 Our main result concerning the well posedness of \eqref{e1.1}--\eqref{e1.2} in the space $L^1(\bbG_{N})$ is as follows.
 
 We need the following technical non-degeneracy condition as in \cite{GH}, required for the proof of  the regularity estimate in \eqref{e3.10}. First, in order to have spatial regularity of  kinetic solutions we can localize the $\chi$-function associated to such solution, multiplying it by some $\phi\in C_c^\infty(\R^N)$,  and so, for $\ell\in \N$ sufficiently large,  we may view our localized $\chi$-functions as periodic with periodic
cell $\ell\bbT^N:=[0,\ell]^N$. Since $\ell\Z^N\subset\Z^N$, for any $\ell\in\N$, we   formulate the non-degeneracy condition below in $\Z^N$, as in \cite{GH}.  

The symbol is defined by
$$
\LL(i\tau,i n, \xi):=i(\tau+b(\xi)\cdot n) + {n}^\top \bfa(\xi)n,
$$  
where $b(\xi)=\bff'(\xi)$,   $n\in \Z^N$.  
For $J,\d>0$ and $\eta\in C_b^\infty(\R)$ nonnegative, let 
\begin{equation*}
\begin{aligned}
\Om_{\LL}^\eta(\tau,\eta;\d)&:=\{\xi\in \supp\eta\,:\, |\LL(i\tau, i n, \xi)|\le \d\},\\
\om_{\LL}^\eta(J;\d) &:= \sup_{\tiny\begin{matrix} \tau\in\R, n\in \Z^N\\ |n|\sim J\end{matrix}}|\Om_{\LL}^\eta(\tau,i n;\d)|.
\end{aligned}
\end{equation*}
Let $\LL_{\xi}:=\po_\xi\LL$. We suppose that there exist $\a\in (0,1)$, $\g>0$ and a measurable function  $\vartheta\in L_\loc^\infty(\R;[1,\infty))$ such that
\begin{equation}\label{e3.nondeg1}
\begin{aligned}
\om_{\cL}^\eta(J;\d) &\lesssim_\eta \left(\frac{\d}{J^\g}\right)^\a,\\
\sup_{\tiny{\begin{matrix}\tau\in\R,n\in\Z^N \\ |n|\sim J \end{matrix}}}\sup_{\xi\in\supp \eta}\frac{|\cL_{\xi}(i\tau,in;\xi)|}{\vartheta(\xi)}&\lesssim_\eta J^\g,\qquad \forall \d>0,\, J\gtrsim 1,
\end{aligned}
\end{equation}
where we employ the usual notation $x\lesssim y$, if $x\le Cy$, for some absolute constant $C>0$, and $x\sim y$, if $x\lesssim y$ and $y\lesssim x$. 
Let us point out that, as in \cite{DHV,GH,FLMNZ2}, the symbol $\LL^\ve$ obtained by replacing $\mathbf{a}(\xi)$ by $\mathbf{a}^\ve(\xi):=\mathbf{a}(\xi)+\ve I$, satisfies the non-degeneracy condition \eqref{e3.nondeg1} uniformly in $\ve$.

  \begin{theorem}[Well posedness in $L^1(\bbG_{N})$]~\label{T:1.0}    
  Assume condition \eqref{e3.nondeg1} holds.  Given $T>0$ and  $u_0\in L^1(\bbG_{N})$, there exists a 
  $L^1(\bbG_{N})$-semigroup solution of \eqref{e1.1}-\eqref{e1.2} with  initial data $u_0$, belonging to $L^1(\Om; L^\infty([0,T], L^1(\bbG_N)))$.
  Moreover, let 
 $u_1(t,x), u_2(t,x)$  be two $L^1(\bbG_{N})$-semigroup solutions of \eqref{e1.1}-\eqref{e1.2} with  initial data $u_{01}, u_{02}\in L^1(\bbG_{N})$. 
 Then,  a.s.,  for a.e.\ $t\in[0,T]$, 
 \begin{equation}\label{e5.4AP} 
 \int_{\bbG_N}|u_1(t)- u_2(t)|\,d\mm \le \, C(T)\int_{\bbG_N}|u_{01}-u_{02}|\,d\mm,
 \end{equation}
 for some constant $C(T)$ which depends on the data of the problem and may also depend on 
 $\om\in\Om$. 
 \end{theorem}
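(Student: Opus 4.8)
The plan is to build the $L^1(\bbG_N)$-semigroup solution as a limit of $\BAP$-entropy solutions, for which the well-posedness theory (existence of $\BAP$-entropy solutions, together with the $L^1$-mean semi-contraction property) has already been established earlier in the paper. First I would fix the stochastic basis and, following Kim's change of variables $w=u-J$ with $J(t,x)=\sum_k g_k(x)\beta_k(t)$, reduce \eqref{e1.1}--\eqref{e1.2} to a deterministic degenerate parabolic-hyperbolic problem (with a forcing term depending on $J$) for $\bbP$-a.e.\ fixed $\omega$. For this deterministic problem I would approximate the initial datum $u_0\in L^1(\bbG_N)$, via the isometry $\B^1(\R^N)\sim L^1(\bbG_N)$, by a sequence $u_0^m\in\AP(\R^N)$ converging to $u_0$ in $\B^1(\R^N)$ (equivalently in $L^1(\bbG_N)$), and obtain the associated $\BAP$-entropy solutions $u^m$. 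The non-degeneracy condition \eqref{e3.nondeg1} enters through the regularity estimate \eqref{e3.10}, which provides the compactness (in a velocity-averaging sense) needed to pass to the limit; the zero-mean assumption on the $g_k$ is not needed for this part.

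The heart of the argument is the passage to the limit $m\to\infty$. Using the $L^1$-mean semi-contraction property applied to the pairs $(u^m,u^{m'})$, I get
\begin{equation*}
\Medint_{\R^N}|u^m(t,x)-u^{m'}(t,x)|\,dx\le C(t)\Medint_{\R^N}|u_0^m(x)-u_0^{m'}(x)|\,dx\to 0
\end{equation*}
uniformly on $[0,T]$, a.s., hence $\{u^m\}$ is Cauchy in $L^\infty([0,T];\B^1(\R^N))$ a.s.; taking expectations and using the uniform bounds (together with a uniform $L^1$-bound inherited from the contraction and from the boundedness of $J$) upgrades this to a Cauchy sequence in $L^1(\Om;L^\infty([0,T];L^1(\bbG_N)))$. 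The limit $u$ is by definition an $L^1(\bbG_N)$-semigroup solution; one must check it actually solves the equation in the appropriate (kinetic/entropy) sense, which follows by passing to the limit in the kinetic formulation, the troublesome nonlinear terms being controlled precisely by the velocity-averaging compactness coming from \eqref{e3.nondeg1}.

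For the stability estimate \eqref{e5.4AP}: given two $L^1(\bbG_N)$-semigroup solutions $u_1,u_2$ with data $u_{01},u_{02}$, by definition each is a limit in $L^1(\Om;L^\infty([0,T];L^1(\bbG_N)))$ of $\BAP$-entropy solutions $u_1^m,u_2^m$ with data $u_{01}^m,u_{02}^m$ converging to $u_{01},u_{02}$. Apply the $\BAP$ $L^1$-mean semi-contraction to the pair $(u_1^m,u_2^m)$ to obtain, a.s.\ and for a.e.\ $t$,
\begin{equation*}
\Medint_{\R^N}|u_1^m(t,x)-u_2^m(t,x)|\,dx\le C(T)\Medint_{\R^N}|u_{01}^m(x)-u_{02}^m(x)|\,dx,
\end{equation*}
then translate everything through the isometry $\B^1(\R^N)\sim L^1(\bbG_N)$ and let $m\to\infty$: the left side converges (after passing to a subsequence, using convergence in $L^1(\Om;L^\infty)$ and extracting an a.s.\ pointwise-in-$t$ subsequence) to $\int_{\bbG_N}|u_1(t)-u_2(t)|\,d\mm$ and the right side converges to $C(T)\int_{\bbG_N}|u_{01}-u_{02}|\,d\mm$, yielding \eqref{e5.4AP}.

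The main obstacle I anticipate is the limit passage in the kinetic/entropy formulation to certify that the constructed limit $u$ is genuinely an $L^1(\bbG_N)$-entropy solution and not merely an $L^1$-limit of approximants: this requires the velocity-averaging / strong-compactness machinery under the non-degeneracy condition \eqref{e3.nondeg1}, handling the degenerate diffusion term $D^2:\Abf(u)$ (whose parabolic dissipation measure must be controlled in the limit), and keeping all estimates uniform in $\omega$ in a way compatible with the a.s.\ statements; the semi-contraction bookkeeping itself, including the $\omega$-dependence of $C(T)$ through the paths of $J$, is comparatively routine once the entropy inequality of Chen--Karlsen is in hand.
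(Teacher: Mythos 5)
Your proposal is correct and follows essentially the same route as the paper: existence of $\BAP$-entropy solutions under \eqref{e3.nondeg1} (where the velocity-averaging estimate \eqref{e3.10} is used), approximation of $u_0\in L^1(\bbG_N)$ by data in $\AP(\R^N)$, the $L^1$-mean semi-contraction to get a Cauchy sequence in $L^\infty([0,T];L^1(\bbG_N))$ a.s., domination in $L^1(\Om)$ to upgrade to convergence in $L^1(\Om;L^\infty([0,T];L^1(\bbG_N)))$, and passage to the limit in the semi-contraction inequality for pairs of approximants to obtain \eqref{e5.4AP}. The only minor difference is that in the final limit $m\to\infty$ no further velocity-averaging is needed (the strong $L^1(\bbG_N)$ convergence already suffices to pass to the limit in the entropy formulation, with lower semicontinuity handling the dissipation term), so your anticipated obstacle there is lighter than you suggest.
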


 \medskip
 
  Concerning the  existence and uniqueness of invariant measures our main result are as follows. Here we will need to restrict our analysis to almost periodic functions whose spectrum is contained in a finitely generated additive group.  Let $\l_i\in\R^N$, $i=1,\cdots,P$, $\Lambda:=\{\l_1,\cdots,\l_P\}$ be a linearly independent set over $\Z$,  and let $\G_\Lambda$ be the additive subgroup of $\R^N$ generated $\Lambda$.  
 We assume that $\Sp(g_k)\subset \G_\Lambda$, for all $k\in\N$. We also only consider initial data $u_0$ satisfying $\Sp(u_0)\subset \G_\Lambda$. We denote by $\AP_*(\R^N)$, $\B_*^1(\R^N)$and $L^1(\bbG_{*N})$ the subspaces of $\AP(\R^N)$, $\B^1(\R^N)$ and $L^1(\bbG_N)$,
 respectively,  whose functions $g$ satisfy $\Sp(g)\subset\G_\Lambda$.   
 
 In this case we have that the unique 
 $L^1(\bbG_N)$-semigroup solution given by our existence and uniqueness results just described satisfy $\Sp(u(t,x))\subset\G_\Lambda$ and we have, in \eqref{e5.4AP},  $C(T)\equiv1$. In particular, we may define a Markov contraction transition semigroup $P_t$ on the bounded Borel functions on $L^1(\bbG_{*N})$, $\B_b(L^1(\bbG_{*N}))$,  in the usual way ({\em cf.}, e.g., \cite{DV2,CPa,EFM}), namely,  $P_t(\phi)(u_0)=\bbE(\phi(u(t)))$, for $\phi\in\B_b(L^1(\bbG_{*N}))$.

 For $\b\in\G_{\Lambda}$, 
 $\b=n_1\l_1+n_2\l_2+\cdots+ n_P\l_P$, $n_j\in\Z$,  let  ${\bf n}_\b:=(n_1,\cdots,n_P)$ and $|{\bf n}_\b|:=\sqrt{n_1^2+\cdots+n_{P}^2}$. 
   
 Let $b:=\bff'$  and  $\vartheta(\xi)=(1+|\xi|^2)^{-1}$ set
 \begin{equation}\label{e5.NDC1}
 \iota^\vartheta(\d)=\sup_{\a\in\R,\b\in \G_{\Lambda}}
 \int_\R\frac{\d(\abf(\xi):\frac{\b}{|{\bf n}_\b|}\otimes\frac{\b}{|{\bf n}_\b|}+\d)}
 {(\abf(\xi):\frac{\b}{|{\bf n}_\b|}\otimes\frac{\b}{|{\bf n}_\b|}+\d)^2+\d^{\nu}|b(\xi)\cdot \frac{\b}{|{\bf n}_\b|}+\a|^2}\vartheta(\xi)\, d\xi,
 \end{equation}
 for some $\nu\in (1,2)$.
 We assume that 
 \begin{equation}\label{e5.NDC}
 \iota^\vartheta(\d)\le c_1^\vartheta \d^\k,
 \end{equation}
 for some $c_1^\vartheta>0$ and $0<\k<1$. 
 
\begin{theorem}\label{T:1.1}  Assume condition \eqref{e5.NDC} holds, with 
$\iota^\vartheta(\d)$ defined by \eqref{e5.NDC1}.
 Then there is a unique  invariant measure for the transition semigroup $P_t$ in 
 $\B_b(L^1(\bbG_{*N}))$.
 \end{theorem}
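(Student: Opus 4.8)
The plan is to reduce Theorem~\ref{T:1.1} to the corresponding statement for a stochastic degenerate parabolic--hyperbolic equation on the torus $\bbT^P$, and then to run the Krylov--Bogoliubov scheme together with the contraction estimate, following the strategy of \cite{DV2} in the hyperbolic case and of \cite{CPa} in the degenerate parabolic case. First I would use the isometric isomorphism $\B_*^1(\R^N)\cong L^1(\bbT^P)$ of \cite{EFM} (which rests on Panov's reduction \cite{Pv}) to transfer \eqref{e1.1} into a stochastic degenerate parabolic--hyperbolic equation on $\bbT^P$, with flux $\wt{\bff}$ and viscosity $\wt{\Abf}$ built from $\bff,\Abf$ and the generators $\l_1,\dots,\l_P$, and with noise coefficients $\wt{g}_k$ satisfying $\int_{\bbT^P}\wt{g}_k\,dx=\Medint_{\R^N}g_k\,dx=0$. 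Under this identification the $L^1(\bbG_{*N})$-semigroup solutions correspond to the unique kinetic solutions on $\bbT^P$ in the sense of \cite{GH}, the semigroup $P_t$ on $\B_b(L^1(\bbG_{*N}))$ corresponds to the transition semigroup of the reduced equation on $\B_b(L^1(\bbT^P))$, and, crucially, condition \eqref{e5.NDC}--\eqref{e5.NDC1}, expressed through the integer frequencies $\mathbf{n}_\b\in\Z^P$ dual to $\bbT^P$, is exactly the quantitative non-degeneracy of the symbol of the reduced equation underlying the velocity-averaging regularity estimate \eqref{e3.10} (a strengthening of \eqref{e3.nondeg1}, of the type used in \cite{DV2,GH}, and stable under the vanishing-viscosity limit thanks to the $\d^\nu$-weight with $\nu\in(1,2)$). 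Since in the present setting the solution flow is an a.s.\ $L^1$-contraction, i.e.\ \eqref{e5.4AP} holds with $C(T)\equiv1$, the semigroup $P_t$ is Feller, in fact a contraction for the $L^1$-Wasserstein distance; it therefore suffices to produce and characterize an invariant measure for the periodic problem.

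For existence I would start from $u_0\equiv0$ and consider the time-averaged laws $\mu_T:=\tfrac1T\int_0^T\mathrm{Law}(u(t))\,dt$ on $L^1(\bbT^P)$. The It\^o formula for $\tfrac12\|u\|_{L^2(\bbT^P)}^2$, together with $G^2\le D_0$ from \eqref{e1.3}, bounds the expected total kinetic and parabolic dissipation on $[0,T]$ linearly in $T$; feeding this bound into the regularity estimate \eqref{e3.10} (available on $\bbT^P$ under \eqref{e5.NDC}, and uniform with respect to the vanishing-viscosity parameter) yields, for some $\gs\in(0,1)$,
$$
\frac1T\,\bbE\!\int_0^T\|u(t)\|_{W^{\gs,1}(\bbT^P)}\,dt\le C\qquad(T\ge1),
$$
that is, $\int\|v\|_{W^{\gs,1}}\,\mu_T(dv)\le C$ uniformly in $T$. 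Since $W^{\gs,1}(\bbT^P)\hookrightarrow\hookrightarrow L^1(\bbT^P)$, the family $\{\mu_T\}_{T\ge1}$ is tight in $L^1(\bbT^P)$, so by Krylov--Bogoliubov every weak limit point $\mu$ of $(\mu_T)$ is an invariant measure. Because the $\wt{g}_k$ have zero spatial mean, the mean $\int_{\bbT^P}u(t)\,dx$ is conserved, and hence $\mu$ is concentrated on $\{v:\int_{\bbT^P}v\,dx=0\}$; this is the normalization under which uniqueness is understood, the mean being a conserved quantity of the dynamics.

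For uniqueness, let $\mu_1,\mu_2$ be two invariant measures, both concentrated on the zero-mean subspace. On a common stochastic basis let $u^1,u^2$ solve the reduced equation driven by the same cylindrical Wiener process with $\mathrm{Law}(u^i(0))=\mu_i$, so that $\mathrm{Law}(u^i(t))=\mu_i$ for all $t\ge0$. By the contraction \eqref{e5.4AP} with $C\equiv1$, $t\mapsto\|u^1(t)-u^2(t)\|_{L^1(\bbT^P)}$ is a.s.\ non-increasing, hence $\bbE\|u^1(t)-u^2(t)\|_{L^1}$ decreases to some $\ell\ge0$. The crucial point is to show $\ell=0$: by the regularity of the invariant measures obtained above, the pair $(u^1(t),u^2(t))$ stays, uniformly in $t$ and with probability close to one, in a fixed compact subset of $L^1(\bbT^P)\times L^1(\bbT^P)$; combining this compactification with the kinetic formulation of \cite{GH}, the Chen--Karlsen Kruzhkov-type inequality \cite{CK}, and the non-degeneracy \eqref{e5.NDC}, one argues as in \cite{DV2,CPa} that a persistent gap $\ell>0$ would force a strictly positive amount of entropy dissipation per unit time, which is incompatible with the convergence of $\bbE\|u^1(t)-u^2(t)\|_{L^1}$. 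Therefore $\ell=0$, and since $(u^1(t),u^2(t))$ is a coupling of $(\mu_1,\mu_2)$ for every $t$, the Kantorovich--Rubinstein duality gives $W_1(\mu_1,\mu_2)\le\ell=0$, so $\mu_1=\mu_2$. Pulling this back through $\B_*^1(\R^N)\cong L^1(\bbT^P)$ proves the theorem.

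The hard part will be this last step: upgrading the non-strict $L^1$-contraction to genuine relaxation $\|u^1(t)-u^2(t)\|_{L^1}\to0$. Unlike the purely hyperbolic case treated in \cite{EFM}, here the degenerate parabolic structure makes the entropy-dissipation measure delicate to exploit, and one must combine the velocity-averaging compactification (which confines the two solutions to a fixed compact set uniformly in time) with a quantitative, evolution-uniform lower bound for the dissipation of the difference $u^1-u^2$; this is precisely why the strengthened non-degeneracy \eqref{e5.NDC}, rather than \eqref{e3.nondeg1} alone, is imposed. Establishing the regularity estimate \eqref{e3.10} uniformly in the vanishing-viscosity parameter, and then passing to the limit in it, is the other technically heavy ingredient.
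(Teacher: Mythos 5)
Your overall architecture matches the paper's: reduce to the periodic problem on $\bbT^P$ via the isometry $L^1(\bbG_{*N})\sim L^1(\bbT^P)$, get existence by Krylov--Bogoliubov with tightness supplied by a spatial regularity estimate, and get uniqueness from the $L^1$-contraction combined with long-time relaxation of the difference of two solutions. However, there is a genuine gap exactly where you say "the hard part will be this last step": the relaxation $\|v^1(t)-v^2(t)\|_{L^1(\bbT^P)}\to0$ as $t\to\infty$ is never proved, only motivated by the heuristic that a persistent gap would force a positive dissipation rate. That heuristic is not a proof; making it quantitative is precisely the delicate content of the arguments in \cite{DV2,CPa}. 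The paper does not re-derive it either: it invokes the decay result of \cite{CPa} (the last equation of their Section~5), stated here as \eqref{e5.8}--\eqref{e5.9}, which holds under the non-degeneracy \eqref{e5.NDC2}--\eqref{e5.NDC3} --- the reduced, $\Z^P$-based form of \eqref{e5.NDC}--\eqref{e5.NDC1} --- and which in the present Lipschitz-flux setting requires the weight $\vartheta$ and the modification of the \cite{CPa} proof carried out in Appendix~\ref{s:A} (without $\vartheta$ the relevant integral diverges because $b$ and $\abf$ are bounded). Without either citing that result or actually executing the dissipation argument, your uniqueness step is incomplete; note also that your proposal never addresses why the weight $\vartheta$ appears in \eqref{e5.NDC1}, which is the paper's main technical adjustment to \cite{CPa}.

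Two secondary points. First, you conflate the two non-degeneracy conditions: \eqref{e3.10} is the Gess--Hofmanov\'a estimate used for well-posedness and holds under \eqref{e3.nondeg1} (resp.\ its $\G_\Lambda$-analogue \eqref{e3.ndeg-new}), whereas the estimate feeding both the tightness of the Ces\`aro averages and the long-time decay is \eqref{e5.DV} from \cite{CPa}, which is what \eqref{e5.NDC} is for; these play distinct roles in the paper. Second, granted the decay, your route to uniqueness (coupling two stationary solutions with the same noise and using Kantorovich--Rubinstein duality) would work, but the paper's argument is more elementary: it compares an arbitrary invariant measure $\nu$ directly with the Ces\`aro averages $\mu_{T_n}$ built from a fixed initial datum, testing against Lipschitz functions and using invariance of $\nu$, the contraction, and \eqref{e5.9}; no Wasserstein machinery or stationary coupling is needed. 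Your existence step (It\^o estimate plus compact Sobolev embedding, zero-mean conservation) is essentially the paper's, up to the substitution of \eqref{e5.DV} for \eqref{e3.10}.
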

 
Assumption \eqref{e5.NDC} is motivated by the non-degeneracy condition imposed in \cite{CPa}  and, except for the presence of the function $\vartheta$, arises naturally  from the latter through the   reduction to the periodic case procedure, described in Section~\ref{S:5}. It plays a crucial role  in connexion with the regularity estimate for periodic solutions proved in \cite{CPa} (see \eqref{e5.DV}), which extends the one for the hyperbolic case established in \cite{DV2}. However, we note that due to the Lipschitz continuity assumptions that we impose on the flux function $\mathbf{f}$ and on the viscosity matrix $\Abf$ (i.e. boundedness of $b(\xi)$ and of $\abf(\xi)$) the integral in \eqref{e5.NDC1} cannot converge without the presence of a weight function $\vartheta$, which is why we introduce it here. Nevertheless,  we can still deduce the referred regularity estimate with minor modifications in the proof contained in \cite{CPa}, as explained in Appendix~\ref{s:A} to which we refer for details.

 \subsection{Earlier works} 
 
 The subject of the asymptotic behaviour of oscillatory solutions of deterministic conservation laws has a very long history that goes back to the first papers on scalar conservation laws (see, e.g., \cite{Ho, Ol, Lx}).  With the introduction of new compactness frameworks such as compensated compactness in, e.g., \cite{Ta, DP1, DP2}, kinetic formulation and averaging lemmas in, e.g., \cite{LPT}, this research gained a great impulse  (see, e.g., \cite{CF0,Fr0,CP, Pv0,Pv, FL, Pv2, GS, GS2}, among others). We also mention the elegant approach in \cite{Da}, using infinite dimensional dynamical systems ideas. The decay of entropy solutions of degenerate parabolic-hyperbolic equations was first established in \cite{CP2}. The latter was extended to Besicovitch almost periodic solutions in \cite{FL} and then also, by a different approach, in \cite{Pv2}.  

On the other hand, in the context of stochastic scalar conservation laws, the study of the asymptotic behaviour of periodic solutions was inaugurated with \cite{EKMS} for the Burgers equation, based on  infinite dimension dynamical systems ideas, which here seems to be the appropriate  approach. The result in \cite{EKMS} was extended to more general conservation laws in several space dimensions in \cite{DV2}. 
The latter was extended to degenerate parabolic-hyperbolic equations in \cite{CPa}. 
We refer to \cite{DV2} and \cite{CPa}  for more references on the subject of invariant measures for stochastic conservation laws  and degenerate parabolic-hyperbolic equations, and other correlated stochastic partial differential equations. Also see \cite{DZ} for a general account on the basic concepts of infinite dimensional dynamical systems associated with stochastic equations.

We finally make a brief comparison between the present paper and  the companion paper \cite{EFM}, which deals with the hyperbolic case.  First, as already mentioned, in the latter,  the contraction property comes almost directly from the Kruzhkov inequality proved in \cite{KJ} 
and holds for any $L^1(\bbG_N)$ solution, while in the present paper we deduce a semi-contraction property using the Kruzhkov-type inequality in the proof of the $L^1$-stability result for the Cauchy problem in \cite{CK}, which then yields a constant $C(T)$ in the quasi-contraction inequality which in general depends on $T$, the data of the problem  and, here, may also depend on  $\om\in\Om$. Second, the non-degeneracy condition in the present case is more involving and also demanded improvements as pointed out in the comment just after the statement of Theorem~\ref{T:1.1}, concerning the assumption \eqref{e5.NDC}. Third, the Proposition~\ref{P:2.1}, establishing the $L^1$-mean semi-contraction inequality, is new  and based on a deterministic formulation motivated by \cite{KJ}, using a Kruzhkov-type inequality obtained in \cite{CK}. Fourth, the well-posedness theory in the present case is much more involving combining ideas of many different sources and original ones, as, for instance, in the proof of Theorem~\ref{T:4.new}, among others.  Finally, the reduction to the periodic case procedure  in the present context of a stochastic degenerate parabolic-hyperbolic equation  is also more complex and has no earlier deterministic equivalent. 
Overall, the present parabolic-hyperbolic case is more difficult and required new ideas in many different points.

\subsection{Plan of the paper} This paper is organized  as  follows. We first establish the well-posedness of the Cauchy problem  \eqref{e1.1}-\eqref{e1.2} of Besicovitch almost periodic ($\BAP$) entropy solutions in Section~\ref{S:2}, Section~\ref{S:3} and Section~\ref{S:4}. Also in Section~\ref{S:4} we establish the existence of entropy solutions in $\bbG_N$ for \eqref{e1.1}-\eqref{e1.2} and introduce the notion of semigroup solutions for which the contraction property in $L^1(\bbG_N)$ holds. In Section~\ref{S:5} we discuss the method of reduction to the periodic case, originally introduced in \cite{Pv}, restricting the well-posedness analysis to $L^1(\bbG_{*N})$, establishing an isometric correspondence between $L^1(\bbG_{*N})$-semigroup solutions and entropy periodic solutions in $L^1(\bbT^P)$, which, in particular, yields the contraction property in $L^1(\bbG_{*N})$. 
Finally, in Section~\ref{S:6}, we establish the existence and uniqueness of an invariant measure for \eqref{e1.1} in $L^1(\bbG_{*N})$.


\section{Almost periodic solutions}\label{S:2}

In this section we define $\BAP$-entropy solutions of \eqref{e1.1}-\eqref{e1.2}, we also establish a semi-contraction property and the stability in a weighed space $L_{\rho_*}^1(\R^N)$.  

Let us define
$$
J(x,t):=\sum_{k=1}^\infty g_k(x) \b_k(t),
$$
and
$$
w=u-J.
$$

Equation \eqref{e1.1} may be written in the form
\begin{equation*}
w_t+\div \fbf(w+J)-D_x^2:\Abf(w+J)=0,
\end{equation*}
or in the form
\begin{equation}\label{e2.1''}
w_t+\div (\fbf(w+J)-\abf(w+J)\nabla_{x}J)-\div_{x}(\abf(w+J)\nabla_{x}w)=0.
\end{equation}   
As for the initial condition we have
\begin{equation}\label{e2.1'''}
w(0,x)=u_0(x),\quad x\in\R^N.
\end{equation}
Equation \eqref{e2.1''} is of the general form
\begin{equation}\label{e2.1iv}
v_t+\sum_{i=1}^N\po_{x_i} b^i(v,t,x)-\sum_{i,j=1}^N\po_{x_i}\left(a^{ij}(v,t,x)\po_{x_j}v\right)=0,
\end{equation}
where $a(v,t,x)=(a^{ij}(v,t,x))_{i,j=1}^N$ is a symmetric nonnegative $N\X N$ matrix. Writing  \eqref{e2.1''} 
in terms of equation \eqref{e2.1iv}, making $b(v,t,x)=(b^1(v,t,x),\cdots,b^N(v,t,x))$, we have
\begin{equation}\label{e2.1new}
\begin{aligned}
& b(w,t,x)=\bff(w+J)-\abf(w+J)\nabla_{x}J,\\
& a(w,t,x)=\abf(w+J).
\end{aligned}  
\end{equation}
In particular, $a(t,x,w)$, as defined in \eqref{e2.1new}, clearly satisfies 
$$
a_{ij}(w,t,x)=\sum_{k=1}^K\s_{ik}(w,t,x)\s_{jk}(w,t,x), \quad K=N,
$$
where $\s_{ik}(w,t,x)=\bar\s_{ik}(w+J)$ and $\bar \s(u)$ is the $N\X K$ such that $\abf(u)=\bar\s(u)\bar \s(u)^\top$, where we write $K$ instead of $N$ in order to preserve the generality; later on we will make use of this generality.  
In this way, we may define a solution for \eqref{e2.1''}-\eqref{e2.1'''} using the definition of solution of the Cauchy problem for the general equation \eqref{e2.1iv} as in \cite{CK}.  Let us denote, as in \cite{CK}, 
\begin{align*}
& \z_{ik}(w,t,x)=\int_0^w\s_{ik}(v,t,x)\,dv,\\
&\z_{ik}^\psi(w,t,x)=\int_0^w\psi(v)\s_{ik}(v,t,x)\, dv,\quad \text{for $\psi\in C(\R)$}.
\end{align*}
Given any convex $C^2$  function $\eta:\R\to\R$, we define the entropy fluxes
$$
q(\cdot,t,x)=(q_i(\cdot,t,x)):\R\to\R^N,\qquad r(\cdot,t,x)=(r_{ij}(\cdot,t,x)):\R\to\R^{N\X N}
$$
by
$$
q_w(w,t,x)=\eta'(w)b_w(w,t,x),\qquad r_w(w,t,x)=\eta'(w)a(w,t,x).
$$
We refer to $\eta$ as an entropy function and $(\eta,q,r)$ as an entropy-entropy flux triple. We are going to consider only the family $\mathcal{E}$ of $C^2$ convex entropies that are  Lipschitz continuous with $\eta''\in C_c(\R)$. Important examples are the $C^2$ approximations of the Kru\v zkov entropies 
$|\cdot-c|$, $c\in\R$. More specifically,  we can consider a $C^1$ approximation of the function 
$\sgn(\cdot)$, such as ({\em cf.} \cite{CK}), for $\ve>0$,
\begin{equation}\label{e2.sgn}
\sgn_\ve(\xi)= \begin{cases} -1, & \xi<-\ve,\\ \sin(\frac{\pi}{2\ve}\xi), & |\xi|\le \ve\\
1, & \xi>\ve.
\end{cases}
\end{equation}
For $c\in\R$, we then get the convex entropy function in the family $\mathcal{E}$
$$
u\mapsto \eta_\ve(w,c)=\int_c^w\sgn_\ve(\xi-c)\,d\xi. 
$$

\begin{definition}\label{D:2.1} Let $u_0\in L^\infty(\R^N)\cap \B^1(\R^N)$ and  $T>0$ be given. A $L^1_\loc(\R^N)\cap \B^1(\R^N)$-valued stochastic process,  adapted
to $\{\F_t\}$, is said to be a $\BAP$-entropy solution of \eqref{e1.1}-\eqref{e1.2} if, for almost all $\om\in\Om$, for $w=u-J$,
\begin{enumerate}
 \item[(i)] $w(t)$ is $L_\loc^1(\R^N)\cap \B^1(\R^N)$-weakly continuous on $[0,T]$.
 \item[(ii)] $w\in L^\infty([0,T]; L_\loc^1(\R^N))\cap \B^1(\R^N)$.  
 \item[(iii)] (Weak regularity) For $k=1,\cdots,K$,
 $$
 \sum_{i=1}^N\left(\po_{x_i}\z_{ik}(w,t,x)-\z_{ik,x_i}(w,t,x)\right)\in L_\loc^2((0,T)\X\R^N).
 $$
 \item[(iv)] (Chain Rule) For $k=1,\cdots,K$,
 $$
 \sum_{i=1}^N\left(\po_{x_i}\z_{ik}^\psi(w,t,x)-\z_{ik,x_i}^\psi(w,t,x)\right)=\psi(w)\sum_{i=1}^N\left(\po_{x_i}\z_{ik}(w,t,x)-\z_{ik,x_i}(w,t,x)\right)
 $$
 a.e.\ in $(0,T)\X\R^N$, for any $\psi\in C_b(\R)$.
 
 \item[(v)] (Entropy Inequality) For any entropy-entropy flux triple $(\eta,q,r)$, with $\eta\in\mathcal{E}$,
 \begin{multline}\label{e2.entropy}
 \po_t\eta(w)+\sum_{i=1}^N\po_{x_i}q_i(w,t,x)-\sum_{i,j=1}^N\po_{x_ix_j}^2r_{ij}(w,t,x)\\
 +\sum_{i=1}^N\left(\eta'(w)b_{i,x_i}(w,t,x)-q_{i,x_i}(w,t,x)\right)+\sum_{i,j=1}^N\po_{x_i}r_{ij,x_i}(w,t,x)\\
 \le -\eta''(w)\sum_{k=1}^K\left(\sum_{i=1}^N\left(\po_{x_i}\z_{ik}(w,t,x)-\z_{ik,x_i}(w,t,x)\right)\right)^2\quad \text{a.s. in $\DD'((0,T)\X\R^N)$}.
 \end{multline}
 
 \item[(vi)] (Initial Condition) For any $R>0$, 
\begin{equation}\label{e2.D21'}
\lim_{t\to0+}\intl_{C_R}|u(t,x)-u_0(x)|\,dx=0,
\end{equation}
 \end{enumerate}
 \end{definition}
  
  The following result was proved in \cite{CK} (see equation (4.12) therein). We slightly modified the formula by including a weight function $\rho_*(x)>0$, $\rho_*\in C^2(\R^N)\cap L^\infty(\R^N)$, with $\sum_{i=1}^N|\po_{x_j}\rho_*(x)|+\sum_{i,j=1}^N|\po_{x_i x_j}^2\rho_*(x)|\le C_0\rho_*(x)$,
  for all $x\in\R^N$, e.g., $\rho_*(x)=e^{-\sqrt{1+|x|^2}}$, which follows immediately by the computations in \cite{CK} and we refer to \cite{CK} for the proof.
  
 \begin{proposition}\label{P:New} Given two $\BAP$-entropy solutions $u(t,x),\ v(t,x)$, setting $w=u-J$, $\hat w=v-J$,
 for a.a.\ $\om\in\Om$, in the sense of the distributions on $(0,T)\X\R^N$,  for some constant $C>0$ depending on $T$, the data of the problem, and possibly on $\om$, we have 
 \begin{multline}\label{e2.30}
\int\limits_{\R^N\X(0,T)}|w-\hat w|\varphi_t\,\rho_*(x) dx\,dt\\
+\int\limits_{\R^N\X(0,T)} \sgn(w-\hat w)\left((\bff(w+J)+\abf(w+J)\nabla J)\right.\\
-\left.(\bff(\hat w+J)+\abf(\hat w+J)\nabla J)\right)\cdot\nabla\varphi \,\rho_*(x)dx\,dt\\
+\int\limits_{\R^N\X(0,T)} \sgn(w-\hat w)(\Abf(w+J)-\Abf(\hat w+J)):\nabla^2\varphi \,\rho_*(x) dx\,dt\\
\ge -C(\max_{t,x}|\varphi|+\max_{t,x,i}|\po_{x_i}\varphi|)\int\limits_{((0,T)\X\R^N)\cap\supp(\varphi)}|w-\hat w|\,\rho_*(x) dx\,dt.
\end{multline}
 \end{proposition}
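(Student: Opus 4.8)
The plan is to reduce the statement to the doubling-of-variables (Kruzhkov) inequality already available in \cite{CK} for the general equation \eqref{e2.1iv}, and then track the effect of the weight $\rho_*$ on the error terms. Concretely, I would start from the two $\BAP$-entropy solutions $u,v$, set $w=u-J$, $\hat w=v-J$, and observe that for a.a.\ $\om$ both $w$ and $\hat w$ are (deterministic) entropy solutions of \eqref{e2.1iv}--\eqref{e2.1new} with the \emph{same} coefficients $b(\cdot,t,x)$, $a(\cdot,t,x)$ — here it is essential that the diffusion matrix $a(w,t,x)=\abf(w+J)$ does not depend on the solution labels, only on $w+J$, and that it admits the square-root factorization $\bar\s$ required in \cite{CK}. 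Thus Definition~\ref{D:2.1}(iii)--(v), which are exactly the hypotheses under which the Chen--Karlsen comparison estimate (4.12) of \cite{CK} is derived, hold for both solutions.

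Next I would invoke the Kruzhkov-type inequality of \cite{CK}, applied with a test function of the special product form $\varphi(t,x)\rho_*(x)$ rather than a general test function. Expanding the derivatives that fall on $\rho_*$ produces three kinds of terms: (a) the principal terms, which are precisely the three integrals on the left of \eqref{e2.30} with the weight $\rho_*$ sitting inside; (b) terms in which one or two derivatives hit $\rho_*$, multiplying $|w-\hat w|$ (from the $\sgn$-contractions of the flux and viscosity differences) by $\po_{x_i}\rho_*$ or $\po^2_{x_ix_j}\rho_*$; and (c) the boundary/parabolic dissipation term, which has a favorable sign and can simply be discarded. For the terms in (b) I would use the standing assumption $\sum_i|\po_{x_i}\rho_*|+\sum_{i,j}|\po^2_{x_ix_j}\rho_*|\le C_0\rho_*$, together with the Lipschitz bounds on $\bff$, $\Abf$ (hence on $\abf$) and the a.s.\ boundedness of $J$ on $[0,T]\times\R^N$ coming from \eqref{e1.3-0} and continuity of the Brownian paths, to estimate each such term by $C(\max|\varphi|+\max_i|\po_{x_i}\varphi|)\int_{\supp\varphi}|w-\hat w|\rho_*\,dx\,dt$, with $C$ absorbing $C_0$, $\|J\|_\infty$, $\Lip(\bff)$, $\Lip(\Abf)$ and $T$; the $\om$-dependence enters only through $\|J(\om)\|_{L^\infty([0,T]\times\R^N)}$. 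Moving these terms to the right-hand side and dropping the dissipation gives exactly \eqref{e2.30}.

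The main obstacle, and the point that requires genuine care rather than bookkeeping, is justifying that the Chen--Karlsen estimate may be applied with the weighted test function $\varphi\rho_*$ in a way that keeps all integrals absolutely convergent and all integrations by parts legitimate. The issue is that $w$ and $\hat w$ are only in $L^\infty([0,T];L^1_\loc\cap\B^1)$, not globally integrable; the weight $\rho_*(x)=e^{-\sqrt{1+|x|^2}}$ is designed precisely to make $\int_{\R^N}|w|\rho_*\,dx<\infty$ (using the mean-value/$\B^1$ bound), but one must check that the doubling-of-variables computation of \cite{CK} — which is local in nature — localizes correctly and that, after passing to the limit in the mollification parameter, the weighted integrals are finite and the manipulations valid. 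As the authors note, this "follows immediately by the computations in \cite{CK}", so I would present it by pointing to equation (4.12) of \cite{CK}, noting that the derivation is unaffected by inserting a smooth positive weight, and then carefully enumerating the extra $\rho_*$-derivative terms and bounding them as above. The remaining steps — the sign of the dissipation term, the Lipschitz estimates, and collecting constants — are routine.
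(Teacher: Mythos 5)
Your proposal takes essentially the same route as the paper: the paper offers no independent proof of this proposition, but simply cites equation (4.12) of \cite{CK} and observes that the weighted version follows from the same computations because $\sum_{i}|\po_{x_i}\rho_*|+\sum_{i,j}|\po^2_{x_ix_j}\rho_*|\le C_0\rho_*$, together with the Lipschitz bounds on $\bff,\Abf$ and the a.s.\ bounds on $J$ and its derivatives — which is exactly the reduction you describe. The only point to keep explicit in the write-up, as your last paragraph already hints, is that the weight must be carried through the Chen--Karlsen computation itself (so that the coefficient-derivative error terms are multiplied by $\varphi\rho_*$ and the right-hand error integral retains $\rho_*$), rather than invoking (4.12) verbatim with the test function $\varphi\rho_*$, which would leave an unweighted error integral and would not suffice for the Gr\"onwall argument in the weighted $L^1$ stability.
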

 
 As a consequence of \eqref{e2.30} we have both the stability in $L_{\rho_*}^1(\R^N)$ and the stability in $\B^1(\R^N)$. 
 First we establish  the stability in $\B^1$; its statement and proof are motivated by proposition~1.3 in \cite{Pv}.  Later on we will see that the constant $C>0$ may be taken equal to 1, at least for solutions in some separable subspaces of $\B^1$.

\begin{proposition}[$L^1$-mean semi-contraction property]~\label{P:2.1} Let 
 $ u(t,x), v(t,x)$  be two $\BAP$-entropy solutions of \eqref{e1.1}-\eqref{e1.2} with initial data $u_0(x), v_0(x)$. 
 Then,  a.s., for some $C>0$ depending on $t$, on the data of the problem, and possibly on $\om$,  for a.e.\ $t>0$, 
 \begin{equation}\label{e2.4AP} 
 N_1(u(t,\cdot)- v(t,\cdot)) \le C\, N_1(u_0-v_0).
 \end{equation}
 \end{proposition}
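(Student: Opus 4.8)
The plan is to deduce \eqref{e2.4AP} from the localized Kruzhkov-type inequality \eqref{e2.30} by a careful choice of test functions $\varphi$, following the strategy of Proposition~1.3 in \cite{Pv}. The key is that $N_1$ is a spatial average over large cubes, so one must simultaneously (a) localize the inequality to a large cube $C_R$, and (b) control the boundary terms coming from $\nabla\varphi$ and $\nabla^2\varphi$ after dividing by $R^N$. First I would fix $\om$ outside the null set where \eqref{e2.30} holds, write $w=u-J$, $\hat w=v-J$, so that $|w-\hat w|=|u-v|$ pointwise, and note that since $u,v$ are bounded the difference $u-v$ takes values in a fixed compact interval; hence all the nonlinearities $\bff,\abf,\Abf$ evaluated along $w+J,\hat w+J$ are Lipschitz in their argument with a uniform constant, and $\nabla J(\cdot,t)$ is bounded (by \eqref{e1.3-0}, since the $g_k$ have bounded gradients and $\sum\a_k^2<\infty$), uniformly in $t$ on $[0,T]$ for a.e.\ $\om$.

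Next I would choose the test function in a product form $\varphi(x,t)=\chi_R(x)\,\theta_\sigma(t)$, where $\chi_R$ is a smooth cutoff, $\chi_R\equiv1$ on $C_R$, supported in $C_{R+1}$, with $|\nabla\chi_R|,|\nabla^2\chi_R|\le C$ independent of $R$, and $\theta_\sigma$ approximates $\mathbf 1_{[0,t_0]}$ so that $\theta_\sigma' \to \gd_0-\gd_{t_0}$. Plugging this into \eqref{e2.30}, the first term produces (in the limit $\sigma\to0$, using the $\B^1$-weak continuity (i) from Definition~\ref{D:2.1} and the initial condition (vi)) the difference $\int_{C_R}|u(t_0)-u_0|\rho_* - \int_{C_R}|v(t_0)-v_0|\rho_*$ type expression; more precisely one gets $\int \chi_R |w(t_0)-\hat w(t_0)|\rho_* - \int\chi_R|u_0-v_0|\rho_*$. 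The second and third terms of \eqref{e2.30}, after $\sigma\to0$, become time integrals over $[0,t_0]$ of integrands supported in the annulus $C_{R+1}\setminus C_R$ (where $\nabla\chi_R,\nabla^2\chi_R$ live), each bounded in absolute value by $C\int_{C_{R+1}\setminus C_R}|w-\hat w|\rho_*$ using the Lipschitz bounds above. The right-hand side of \eqref{e2.30} is bounded by $C\int_0^{t_0}\int_{C_{R+1}}|w-\hat w|\rho_*$. Collecting everything and using $\rho_*\le \|\rho_*\|_\infty$, $\rho_*\ge c_R>0$ on $C_{R+1}$, I would arrive at
\begin{equation*}
\int_{C_R}|u(t_0)-v(t_0)|\,dx \le C_1\int_{C_{R+1}}|u_0-v_0|\,dx + C_1 t_0 \sup_{s\le t_0}\int_{C_{R+1}}|u(s)-v(s)|\,dx + (\text{annulus terms}).
\end{equation*}

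The main obstacle is handling the weight $\rho_*$ and the annulus terms so that, after dividing by $R^N$ and sending $R\to\infty$, nothing is lost: the ratio $\|\rho_*\|_\infty/\inf_{C_{R+1}}\rho_*$ blows up with $R$ for $\rho_*(x)=e^{-\sqrt{1+|x|^2}}$, so one cannot naively compare $\rho_*$-weighted integrals with Lebesgue integrals on all of $C_{R+1}$. I would circumvent this by not weighting at the final step at all: instead, use that \eqref{e2.30} holds with \emph{any} admissible weight $\rho_*$, in particular one can take $\rho_*\equiv1$ if the computations in \cite{CK} allow it on cubes, or more robustly absorb $\rho_*$ into $\chi_R$ by choosing $\chi_R$ so that $\chi_R\rho_*$ has the required cutoff and derivative-bound structure — since $\chi_R$ is $\equiv1$ on $C_R$ and the $\rho_*$-derivative bounds are dominated by $\rho_*$ itself, the products $\nabla(\chi_R\rho_*)$, $\nabla^2(\chi_R\rho_*)$ remain bounded by a constant times $\rho_*$ on the annulus. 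Then, dividing by $R^N$: the annulus $C_{R+1}\setminus C_R$ has measure $O(R^{N-1})$, and $|u-v|$ is bounded, so the annulus terms are $O(R^{N-1}/R^N)\to0$; the term $C_{R+1}$ versus $C_R$ discrepancy in the main integrals is likewise negligible in the $R\to\infty$ limit since $|u-v|\in L^\infty$. This yields
\begin{equation*}
N_1(u(t_0)-v(t_0)) \le C_1\, N_1(u_0-v_0) + C_1 t_0\, \operatorname*{ess\,sup}_{s\le t_0} N_1(u(s)-v(s)),
\end{equation*}
and finally a Gronwall argument (noting $t\mapsto N_1(u(t)-v(t))$ is controlled — by (ii) it is essentially bounded, and the estimate closes for $t_0$ small, then iterates over $[0,T]$) upgrades this to \eqref{e2.4AP} with $C=C(t)$ of the form $C_1 e^{C_1 t}$. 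The dependence of $C$ on $\om$ enters solely through $\|\nabla J(\cdot,s)\|_{L^\infty_x}$ over $s\in[0,t]$, which is a.s.\ finite.
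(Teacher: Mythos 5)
Your overall strategy (apply \eqref{e2.30} with $\rho_*\equiv1$ to compactly supported test functions on large cubes, divide by $R^N$, let $R\to\infty$, then close by a Gr\"onwall/iteration argument) is the same as the paper's, but your treatment of the cutoff contains a genuine gap. You take $\chi_R\equiv1$ on $C_R$, supported in $C_{R+1}$, with $|\nabla\chi_R|,|\nabla^2\chi_R|\le C$ \emph{independent of $R$}, so the flux and second-order terms are not small pointwise; they are concentrated on the unit-width annulus $C_{R+1}\setminus C_R$, and you dispose of them by asserting that $|u-v|$ is bounded, so that these terms are $O(R^{N-1})$ and vanish after division by $R^N$. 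But no $L^\infty$ bound on the solutions is available: Definition~\ref{D:2.1} only makes $u,v$ $L^1_\loc\cap\B^1$-valued (items (i)--(ii)), and indeed the restriction to the entropy class $\mathcal{E}$ of Lipschitz entropies with $\eta''\in C_c(\R)$ exists precisely because solutions are not assumed bounded; nothing in the paper (nor in your argument) establishes such a bound, and with the unbounded-in-time noise $J$ one should not expect one uniformly in $\om$. Without it, your annulus terms are only controlled by $R^{-N}\int_0^{t_0}\int_{C_{R+1}\setminus C_R}|u-v|\,dx\,dt$; for each fixed $t$ this tends to $0$ because $|u(t)-v(t)|\in\B^1(\R^N)$ has a mean value, but to pass the limit inside the time integral you need a domination/uniform-integrability argument in $t$ (the quantity $\sup_R R^{-N}\int_{C_{R+1}}|u(s)-v(s)|\,dx$ is not obviously in $L^1(0,t_0)$), which you do not supply. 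The paper sidesteps this entirely by using the \emph{rescaled} cutoff $\varphi=R^{-N}\chi_\nu(t)g(x/R)$: then $\nabla_x\varphi$ and $\nabla_x^2\varphi$ carry explicit factors $R^{-1}$ and $R^{-2}$ multiplying averages of the form $(kR)^{-N}\iint_{(t_0,t_1)\times C_{kR}}|u-v|$, so only \emph{boundedness} of these space--time averages (from $u,v\in L^\infty([0,T];\B^1)$) is needed, not their convergence to zero on thin annuli; see \eqref{e2.5}--\eqref{e2.7}.

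Two smaller points. First, your fallback of ``absorbing $\rho_*$ into $\chi_R$'' does not serve the purpose: keeping the weight $\rho_*(x)=e^{-\sqrt{1+|x|^2}}$ in the final inequality and dividing by $R^N$ gives $0\le 0$ in the limit, so it cannot recover $N_1$; the only viable route is the one the paper takes, namely that $\rho_*\equiv1$ is itself an admissible weight in \eqref{e2.30} (it satisfies all the stated conditions, and the test functions are compactly supported). Second, the appeal to boundedness of $u,v$ to get uniform Lipschitz constants is unnecessary, since $\bff$ and $\Abf$ are assumed globally Lipschitz; your Gr\"onwall-type closing argument via small time steps is fine once the finiteness of $\operatorname{ess\,sup}_{s\le T}N_1(u(s)-v(s))$ from item (ii) is invoked, and your treatment of the initial time via (vi) matches the paper's.
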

 
 \begin{proof}   We apply \eqref{e2.30} with $\rho_*\equiv1$.   We define a sequence approximating the indicator function of the interval $(t_0,t_1]$ , by setting for $\nu\in\N$,
$$
\d_\nu(s)=\nu\l(\nu s),\quad \theta_\nu(t)=\int_0^t\d_\nu(s)\,ds=\int_0^{\nu t}\l(s)\,ds,
$$
where $\l\in C_c^\infty(\R)$, $\supp \l\subset [0,1]$, $\l\ge0$, $\int_{\R}\l(s)\,ds=1$.   We see that $\d_\nu(s)$ converges to the Dirac measure in the sense of distributions in 
$\R$ while $\theta_\nu(t)$ converges everywhere to the Heaviside function. For $t_1>t_0>0$, if $\chi_\nu(t)=\theta_\nu(t-t_0)-\theta_\nu(t-t_1)$, then $\chi_\nu\in C_c^\infty(\R_+)$,
$0\le\chi_\nu\le 1$, and the sequence $\chi_\nu(t)$ converges everywhere, as $\nu\to\infty$, to the indicator function of the interval $(t_0,t_1]$. Let us take $g\in C_c^\infty(\R^d)$, satisfying $0\le g\le 1$, $g(y)\equiv 1$ in the cube $C_1$, $g(y)\equiv 0$ outside the cube $C_k$, with $k>1$. We apply \eqref{e2.30} to the test function $\varphi=R^{-N}\chi_\nu(t)g(x/R)$, for $R>0$. We then get
\begin{multline}\label{e2.4}
\int_0^\infty\bigl(R^{-N}\intl_{\R^N}|u(t,x)-v(t,x)|g(x/R)\,dx\bigr)(\d_\nu(t-t_0)-\d_\nu(t-t_1))\,dt\\
+R^{-N-1}\iint_{\R_+^{N+1}}\sgn(w-\hat w)((\bff(w+J)+\abf(w+J)\nabla J)\\
-(\bff(\hat w+J)+\abf(\hat w+J)\nabla J))\cdot\nabla_yg(x/R)\chi_\nu(t)\,dx\,dt\\
+R^{-N-2}\sum_{i,j=1}^\k\iint_{R_+^{N+1}}\sgn(w-\hat w)( A_{ij}(w+J)-A_{ij}(\hat w+J))\po_{y_iy_j}^2g(x/R)\chi_\nu(t)\,dx\,dt\\
\ge-C(\frac1{R^N}+\frac1{R^{N+1}})\int_{(t_0,t_1)\X  C_{kR}}|u-v|\,dx\,dt.
\end{multline}
Define
$$
F=\{t>0\,:\, \text{$(t,x)$ is a Lebesgue point of $|u(t,x)-v(t,x)|$ for a.e.\ $x\in\R^d$} \}.
$$
As a consequence of Fubini's theorem, $F$ is a set of full Lebesgue measure and so each $t\in F$ is a Lebesgue point of the functions
$$
I_R(t)=R^{-N}\int_{\R^N}|u(t,x)-v(t,x)| g(x/R)\,dx,
$$
for all $R>0$ and all $g\in C_c(\R)$ (see, e.g., lemma~1.3 in \cite{Pv}). Now we assume $t_0,t_1\in F$ and take the limit as $\nu\to\infty$ in \eqref{e2.4} to get
\begin{multline}\label{e2.5}
I_R(t_1)\le I_R(t_0)+ R^{-N-1}\iintl_{(t_0,t_1)\X\R^N}\sgn(w-\hat w)((\bff(w+J)+\abf(w+J)\nabla J)\\
-(\bff(\hat w+J)+\abf(\hat w+J)\nabla J))\cdot\nabla_y g(x/R)\,dx\,dt\\
+R^{-N-2}\sum_{i,j=1}^N\iintl_{(t_0,t_1)\X\R^N}\sgn(w-\hat w) ( A_{ij}(w+J)-A_{ij}(\hat w+J))\po_{y_iy_j}^2 g(x/R)\,dx\,dt\\
+C(\frac1{R^N}+\frac1{R^{N+1}})\int_{(t_0,t_1)\X  C_{kR}}|u-v|\,dx\,dt.
\end{multline}
Now, we have
\begin{multline}\label{e2.6}
R^{-N-1}\biggl|\iintl_{(t_0,t_1)\X\R^N}\sgn(w-\hat w)((\bff(w+J)+\abf(w+J)\nabla J)\\
-(\bff(\hat w+J)+\abf(\hat w+J)\nabla J))\cdot\nabla_y g(x/R)\,dx\,dt\biggr|\\
\le k^{N}R^{-1}(\Lip(\bff)+\|\nabla J\|_\infty\Lip(\abf)) \|\nabla g\|_\infty  (kR)^{-N}\iintl_{(t_0,t_1)\X C_{kR}}|u(t,x)- v(t,x)|\,dx\,dt  \\
\longrightarrow 0,
\quad \text{as $R\to\infty$},
\end{multline}
which follows since a.s.\ $u(t,x), v(t,x)\in L^\infty([0,T]; \B^1(\R^N))$. 
Also, we have
\begin{multline}\label{e2.7}
R^{-N-2}\left\vert \sum_{i,j=1}^\k\iint_{R_+^{N+1}}\sgn(w-\hat w) ( A_{ij}(w+J)-A_{ij}(\hat w+J))\po_{y_iy_j}^2g(x/R)\chi_\nu(t)\,dx\,dt\right\vert \\ 
\le k^N R^{-2}\Lip(\Abf) \|\nabla^2 g\|_\infty (kR)^{-N}\iintl_{(t_0,t_1)\X C_{kR}}|w(t,x)-\hat w(t,x)|\,dx\,dt\to 0,\\
  \text{as $R\to\infty$}
\end{multline}
which also follows because a.s.\ $u(t,x), v(t,x)\in L^\infty([0,T];\B^1(\R^N))$.

 On the other hand, we have
 $$
 N_1(u(t,\cdot)-v(t,\cdot))\le \limsup_{R\to\infty} I_R(t)\le k^N N_1(u(t,\cdot)-v(t,\cdot)),
 $$
 so taking the limit as $R\to\infty$ in \eqref{e2.5} and applying Gr\"onwall,  for $t_0, t_1\in F$, $t_0<t_1$,   we get 
 $$
 N_1(u(t_1,\cdot)-v(t_1,\cdot))\le k^{N}C N_1(u(t_0,\cdot)-v(t_0,\cdot)),
 $$
 and since $k>1$ is arbitrary we can make $k\to 1+$ to get the desired result. Finally, for $t_0=0$, we use \eqref{e2.D21'} to send $t_0\to0+$ in \eqref{e2.5} to obtain \eqref{e2.4AP}. 
  
\end{proof}

\begin{remark}\label{R:2.1} We remark that for functions $g\in\B^1(\R^N)$ the $N_1(g)$ coincides with the mean value of $|g|$ and so with the 
norm in $L^1(\bbG_N)$,  by the isometric isomorphism between $\B^1(\R^N)$ and $L^1(\bbG_N)$ with the Haar measure $\mm$ induced by the mean-value.  
Therefore, we may write \eqref{e2.4AP} as
\begin{equation}\label{e2.4AP'}
\int_{\bbG_N}|u(t)-v(t)|\,d\mm\le C\int_{\bbG_N}|u_0-v_0|\,d\mm.
\end{equation} 
\end{remark}
 
Also, form \eqref{e2.30}, it follows immediately the $L_{\rho_*}^1$-stability, with $\rho_*$ decaying sufficiently fast for $|x|\to\infty$, such as $\rho_*(x)=e^{-\sqrt{1+|x|^2}}$. 
 
\begin{theorem}[{\em cf.} \cite{CK}]\label{T:2.1} Let 
 $ u(t,x), v(t,x)$  be two solutions of \eqref{e1.1}-\eqref{e1.2} with initial data $u_0(x), v_0(x)\in L^\infty\cap\B^1(\R^N)$. Let 
 $\rho_*(x)=e^{-\sqrt{1+|x|^2}}$.
 Then,  a.s., for some $C>0$ depending on $t$, on the data of  the problem, 
 and possibly on $\om$, for a.e.\ $t>0$,
 \begin{equation}\label{e2.stab}
 \int_{\R^N}|u(t,x)-v(t,x)|\,\rho_*(x) dx\le C\int_{\R^N}|u_0(x)-v_0(x)|\,\rho_*(x) dx.
 \end{equation} 
\end{theorem}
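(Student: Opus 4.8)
The plan is to repeat the argument of Proposition~\ref{P:2.1} while retaining the weight $\rho_*(x)=e^{-\sqrt{1+|x|^2}}$ in \eqref{e2.30}; the exponential decay of $\rho_*$ means that no spatial renormalization is needed and the limiting procedure is in fact lighter than there.

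First I would fix $t_1>t_0>0$ (with $t_1<T$), take the mollified indicator $\chi_\nu(t)=\theta_\nu(t-t_0)-\theta_\nu(t-t_1)$ of $(t_0,t_1]$ exactly as in the proof of Proposition~\ref{P:2.1}, choose $g\in C_c^\infty(\R^N)$ with $0\le g\le1$, $g\equiv1$ on $C_1$ and $\supp g\subset C_k$ for some $k>1$, and apply \eqref{e2.30} to the nonnegative test function $\varphi(t,x)=\chi_\nu(t)g(x/R)$, $R>1$ (recall $w-\hat w=u-v$, $\bff(w+J)=\bff(u)$, etc.). Letting $\nu\to\infty$ and restricting $t_0,t_1$ to the full–measure set $F$ of common Lebesgue points of the maps $t\mapsto\int_{\R^N}|u(t,x)-v(t,x)|g(x/R)\rho_*(x)\,dx$ (over all $R$ and all such $g$, exactly as in the proof of Proposition~\ref{P:2.1}), the term carrying $\varphi_t$ becomes $\int_{\R^N}|u(t_0)-v(t_0)|g(x/R)\rho_*\,dx-\int_{\R^N}|u(t_1)-v(t_1)|g(x/R)\rho_*\,dx$, so that, after rearranging,
\begin{multline*}
\int_{\R^N}|u(t_1)-v(t_1)|\,g(x/R)\,\rho_*\,dx\le\int_{\R^N}|u(t_0)-v(t_0)|\,g(x/R)\,\rho_*\,dx\\
+E_R^{(1)}+E_R^{(2)}+C\bigl(1+R^{-1}\|\nabla g\|_\infty\bigr)\int_{t_0}^{t_1}\int_{C_{kR}}|u-v|\,\rho_*\,dx\,dt,
\end{multline*}
where $E_R^{(1)}$ is the flux term carrying $\nabla_x[g(x/R)]=R^{-1}\nabla g(x/R)$ and $E_R^{(2)}$ the second–order term carrying $\nabla_x^2[g(x/R)]=R^{-2}\nabla^2g(x/R)$.

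Next I would let $R\to\infty$. Since $\nabla g(x/R)$ and $\nabla^2g(x/R)$ vanish on $C_R$ and outside $C_{kR}$, they are supported where $\rho_*\le e^{-R/2}$, while $u(t),v(t)\in\B^1(\R^N)$ forces $\int_{C_{kR}}|u(t)-v(t)|\,dx=O(R^N)$; together with the Lipschitz bounds on $\bff,\abf,\Abf$ and the a.s.\ local boundedness of $\nabla J$ this gives $|E_R^{(1)}|=O(R^{N-1}e^{-R/2})\to0$ and $|E_R^{(2)}|=O(R^{N-2}e^{-R/2})\to0$. Because $0\le g(\cdot/R)\le1$ with $g(\cdot/R)\to1$ pointwise, $\int_{\R^N}\rho_*<\infty$, and $\int_{\R^N}|u(t)-v(t)|\rho_*\,dx<\infty$ for a.e.\ $t$ (again by $u,v\in\B^1$ and the decay of $\rho_*$, via summation over dyadic annuli), dominated and monotone convergence upgrade the remaining terms to their $g$–free versions. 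Writing $\phi(t):=\int_{\R^N}|u(t)-v(t)|\rho_*\,dx$, which belongs to $L^\infty([0,T])$ a.s.\ by property~(ii) of Definition~\ref{D:2.1} and the decay of $\rho_*$, I then obtain $\phi(t_1)\le\phi(t_0)+C\int_{t_0}^{t_1}\phi(t)\,dt$ for all $t_0<t_1$ in $F$, whence by Gr\"onwall $\phi(t_1)\le\phi(t_0)e^{C(t_1-t_0)}$. Finally, picking $t_0=t_0^{(n)}\in F$ with $t_0^{(n)}\downarrow0$ and using the initial condition \eqref{e2.D21'} (split $\R^N=C_R\cup(\R^N\setminus C_R)$, bound $\int_{\R^N\setminus C_R}\rho_*$ for $R$ large and use the $L^1_{\loc}$–convergence on $C_R$) yields $\phi(t_0^{(n)})\to\int_{\R^N}|u_0-v_0|\rho_*\,dx$, so that $\phi(t)\le e^{Ct}\int_{\R^N}|u_0-v_0|\rho_*\,dx$ for a.e.\ $t>0$, which is \eqref{e2.stab} after renaming the constant.

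The only delicate point is the $R\to\infty$ passage in $E_R^{(1)},E_R^{(2)}$: one must exploit that the solutions are merely Besicovitch (not $L^1$) in space, so that $\int_{C_{kR}}|u-v|\,dx$ grows at most polynomially, and that the exponential decay of $\rho_*$ dominates this growth. Everything else is a routine — indeed lighter — repetition of the computation in the proof of Proposition~\ref{P:2.1}.
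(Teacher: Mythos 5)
Your argument is correct and is essentially the proof the paper intends: Theorem~\ref{T:2.1} is presented as an immediate consequence of the weighted Kruzhkov-type inequality \eqref{e2.30} (with details deferred to \cite{CK}), and your derivation—test functions $\chi_\nu(t)g(x/R)$, the exponential decay of $\rho_*$ annihilating the flux and second-order error terms as $R\to\infty$, Gr\"onwall, and the initial condition \eqref{e2.D21'}—is exactly that standard argument, mirroring the proof of Proposition~\ref{P:2.1}. The only inessential difference is the order of limits at the initial time: sending $t_0\to0+$ at fixed $R$ (as done for Proposition~\ref{P:2.1}) and only then letting $R\to\infty$ avoids the uniform-in-$t$ tail estimate on $\R^N\setminus C_R$ that your ordering requires.
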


\section{Approximate solutions}\label{S:3}

\subsection{First approximation}\label{SS:3.1}

In this and the next subsection we assume  $u_0\in \AP(\R^N)$. We consider  first the fourth order approximation for \eqref{e1.1}-\eqref{e1.2}, with $\Abf^\ve(u)=(\Abf+\ve I)(u)$, 
\begin{equation}\label{e3.1}
du+\div(\bff(u))\,dt=D^2:\Abf^\ve(u)\,dt-\mu\Delta^2u\,dt +\Phi_\ve \,dW(t), 
\end{equation}
$x\in\R^N$,  $t\in(0,T)$, with initial condition
\begin{equation}\label{e3.2}
u(0,x)=u_{0\ve}(x),\quad x\in\R^N,
\end{equation}
where $u_{0\ve}$ is a trigonometric polynomial approximating $u_0$ in $\AP(\R^N)$
and $\Phi_\ve e_k=g_{k\ve}$ where $g_{k\ve}\equiv 0$ for $k>1/\ve$ and $g_{k\ve}$ is a trigonometric polynomial approximating $g_k$ uniformly in $\AP(\R^N)$ for $k\le 1/\ve$, $k\in\N$. 

Let $J_\ve=\int_0^t \Phi_\ve dW$. For $w=u-J_\ve$ we write \eqref{e3.1} as
\begin{equation}\label{e3.3}
w_t+\div \fbf (w+J_\ve)-D^2:\Abf^\ve(w+J_\ve)= -\mu \Delta^2 w -\mu \Delta^2 J_\ve.
\end{equation} 

For $g\in \mathcal{S}'(\R^N)$, the space of Schwartz distributions in $\R^N$, let us denote by $\hat g$ or $\mathcal{F}(g)$ the Fourier transform of $g$ and by $\check g$ or $\mathcal{F}^{-1}(g)$ the inverse Fourier transform. 
Also, let us set 
$$
\Phi_*(x)= C_* \mathcal{F}^{-1}(e^{-|y|^4})(x),
$$
where $C_*$ is chosen so that $\int_{\R^N}\Phi_*(x)\,dx=1$. Let us denote
$$
K(t,x)=\frac{1}{t^{n/4}}\Phi_*(\frac{x}{t^{1/4}}).
$$
We can verify that $K(t,x)$ is the fundamental solution of the equation
$$
u_t=-\Delta^2 u,
$$
and $K_\mu(t,x)=K(\mu t, x)$ is the fundamental solution of
$$
u_t=-\mu \Delta^2 u.
$$
The most important facts about $K_\mu$ for us are the following 
$$
\|K_\mu(t)\|_1=1,\quad \|D^k K_\mu(t)\|_1\le \frac{C_k}{(\mu t)^{k/4}},\quad k\in\N,
$$
where the constants $C_k$ only depend on $k$. 
Since we are assuming $\bff\in C^3(\R; \R^N)\cap \Lip(\R; \R^N)$ and $\Abf\in C^3(\R;\R^{N\X N})\cap \Lip(\R;\R^{N\X N})$, we can obtain a solution to \eqref{e3.1}, \eqref{e3.2}, with 
$w, \nabla w, \nabla^2 w, \nabla^3 w, \nabla^4 w, w_t\in C([0,T];L^\infty(\R^N))$ in a standard
way beginning  by a fixed point argument for the functional
\begin{multline}\label{e3.4}
\LL(w)(t)=K_\mu(t)*u_{0\ve}-\sum_{i=1}^N\int_0^t\po_{x_i} K_\mu(t-s)*f_i(w+J_\ve)(s)\,ds\\
+\sum_{i,j=1}^N\int_0^t \po_{x_i x_j}^2K_\mu(t-s)* A_{ij}^\ve(w+J_\ve)(s)\,ds\\
- \int_0^tK_\mu(t-s)*\mu\Delta^2 J_\ve(s)\,ds.
\end{multline}
After proving the existence of a smooth solution to 
\eqref{e3.1}-\eqref{e3.2}, we proceed as in \cite{DHV}: first we consider the limit as $\mu\to0$, with $\ve>0$ fixed; then we consider the limit as $\ve\to0$.  

We have the following analogue of proposition~4.3 of \cite{DHV}. 

\begin{proposition}\label{P:3.1} Let $u^{\mu,\ve}$ be the solution of \eqref{e3.1}-\eqref{e3.2}. For $p\ge2$, $\rho_*$ as in Theorem~\ref{T:2.1}, the following estimate holds
\begin{multline}\label{eP31}
\bbE \sup_{0\le t\le T} \|u^{\mu,\ve}(t)\|_{L_{\rho_*}^2(\R^N)}^p+p\ve\bbE\int_0^T\|u^{\mu,\ve}\|_{L_{\rho_*}^2(\R^N)}^{p-2}\|\nabla u^{\mu,\ve}\|_{L_{\rho_*}^2(\R^N)}^2\,ds\\
\le C(1+\bbE\|u_0\|_{L_{\rho_*}^2(\R^N)}),
\end{multline}
where $L_{\rho_*}^2(\R^N)$ is the $L^2$ space with respect to the measure $\rho_*(x)\,dx$, and the constant $C$ does not depend on $\mu,\ve$. 
\end{proposition}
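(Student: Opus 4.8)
The plan is to apply Itô's formula to $t\mapsto\|u^{\mu,\ve}(t)\|_{L^2_{\rho_*}}^2$, then to its $p/2$-th power, to estimate every resulting term by $C\bigl(1+\|u^{\mu,\ve}(t)\|^p_{L^2_{\rho_*}}\bigr)$ plus genuinely negative multiples of $\ve\,\|u^{\mu,\ve}\|^{p-2}_{L^2_{\rho_*}}\|\nabla u^{\mu,\ve}\|^2_{L^2_{\rho_*}}$ and $\mu\,\|u^{\mu,\ve}\|^{p-2}_{L^2_{\rho_*}}\|\Delta u^{\mu,\ve}\|^2_{L^2_{\rho_*}}$, and then to close by a Grönwall argument. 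The regularity $w,\nabla w,\dots,\Delta^2w,w_t\in C([0,T];L^\infty(\R^N))$ of the smooth solution, together with $\rho_*\in L^1(\R^N)$ and $|\po_{x_j}\rho_*|+|\po^2_{x_ix_j}\rho_*|\le C_0\rho_*$ (and, for $\rho_*(x)=e^{-\sqrt{1+|x|^2}}$, $|\po^\alpha\rho_*|\le C_\alpha\rho_*$ for all $|\alpha|\le 4$), make the Itô formula and all the integrations by parts below legitimate; also $\|u_{0\ve}\|_{L^2_{\rho_*}}\le\|u_0\|_\infty\|\rho_*\|_{L^1}^{1/2}+1$ is bounded uniformly in $\ve$.

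Write $X(t):=\|u^{\mu,\ve}(t)\|^2_{L^2_{\rho_*}}$. Testing \eqref{e3.1} against $2u^{\mu,\ve}\rho_*$ gives $dX=(\mathcal I_{\bff}+\mathcal I_{\Abf}+\mathcal I_\ve+\mathcal I_\mu)\,dt+\bigl(\sum_k\|g_{k\ve}\|^2_{L^2_{\rho_*}}\bigr)dt+2\sum_k\la u^{\mu,\ve},g_{k\ve}\ra_{L^2_{\rho_*}}\,d\b_k$. For the flux term one integrates by parts, uses $\po_{x_i}u\,f_i(u)=\po_{x_i}\Psi_i(u)$ with $\Psi_i(u)=\int_0^u f_i$, moves the remaining derivative onto $\rho_*$, and invokes $|\Psi_i(u)|\le C(1+|u|^2)$ (linear growth of $\bff$) and $|\nabla\rho_*|\le C_0\rho_*$ to get $|\mathcal I_{\bff}|\le C(1+X)$. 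For $\mathcal I_{\Abf}=2\int u\rho_*\sum_{ij}\po^2_{x_ix_j}A_{ij}(u)\,dx$, one integration by parts produces the nonpositive term $-2\int\rho_*\,(\nabla u)^\top\abf(u)\nabla u\,dx\le 0$ (dropped) plus $-2\sum_{ij}\int u\,\po_{x_i}\rho_*\,a_{ij}(u)\po_{x_j}u\,dx$; writing $a_{ij}(u)\po_{x_j}u=\po_{x_j}B_{ij}(u)$ with $B_{ij}(u)=A_{ij}(u)-A_{ij}(0)$, then $B_{ij}(u)\po_{x_j}u=\po_{x_j}\Gamma_{ij}(u)$ with $\Gamma_{ij}(u)=\int_0^uB_{ij}$, two further integrations by parts move all derivatives onto $\rho_*$ and leave $-\sum_{ij}\int\po^2_{x_ix_j}\rho_*\,\Gamma_{ij}(u)\,dx+\sum_{ij}\int u\,\po^2_{x_ix_j}\rho_*\,B_{ij}(u)\,dx$, which is $\le C\int\rho_*|u|^2\,dx=CX$ because $|B_{ij}(u)|\le\|\abf\|_\infty|u|$ and $|\Gamma_{ij}(u)|\le\tfrac12\|\abf\|_\infty|u|^2$ (boundedness of $\abf=\Abf'$) and $|\po^2_{x_ix_j}\rho_*|\le C_0\rho_*$. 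Thus $\mathcal I_{\Abf}\le CX$, \emph{without consuming any dissipation}. The $\ve$-part $\mathcal I_\ve=2\ve\int u\rho_*\Delta u\,dx=-2\ve\|\nabla u\|^2_{L^2_{\rho_*}}-\ve\int\nabla\rho_*\cdot\nabla(u^2)\,dx=-2\ve\|\nabla u\|^2_{L^2_{\rho_*}}+\ve\int\Delta\rho_*\,u^2\,dx\le-2\ve\|\nabla u\|^2_{L^2_{\rho_*}}+C_0 X$. The fourth-order part $\mathcal I_\mu=-2\mu\int\Delta(u\rho_*)\Delta u\,dx=-2\mu\|\Delta u\|^2_{L^2_{\rho_*}}-2\mu\int(2\nabla\rho_*\cdot\nabla u+u\Delta\rho_*)\Delta u\,dx$; the last integral, after one more integration by parts and $|\po^\alpha\rho_*|\le C_\alpha\rho_*$, is $\le\mu\|\Delta u\|^2_{L^2_{\rho_*}}+C\mu\|\nabla u\|^2_{L^2_{\rho_*}}+C\mu X$, and the weighted interpolation $\|\nabla u\|^2_{L^2_{\rho_*}}\le 2\|u\|_{L^2_{\rho_*}}\|\Delta u\|_{L^2_{\rho_*}}+C_0^2\|u\|^2_{L^2_{\rho_*}}\le\delta\|\Delta u\|^2_{L^2_{\rho_*}}+C_\delta X$ (from integrating $\int\rho_*|\nabla u|^2$ by parts) lets one pick $\delta$ \emph{independent of $\mu,\ve$} and absorb the $\mu$-multiplied terms, leaving $\mathcal I_\mu\le-\tfrac\mu2\|\Delta u\|^2_{L^2_{\rho_*}}+CX$. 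Collecting, and using $\sum_k\|g_{k\ve}\|^2_{L^2_{\rho_*}}\le D_0\|\rho_*\|_{L^1}$, $dX\le\bigl(-2\ve\|\nabla u\|^2_{L^2_{\rho_*}}-\tfrac\mu2\|\Delta u\|^2_{L^2_{\rho_*}}+C(1+X)\bigr)dt+2\sum_k\la u^{\mu,\ve},g_{k\ve}\ra_{L^2_{\rho_*}}d\b_k$.

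Applying Itô to $X^{p/2}$, the term $\tfrac p2X^{p/2-1}dX$ yields the good terms $-p\ve\|u\|^{p-2}_{L^2_{\rho_*}}\|\nabla u\|^2_{L^2_{\rho_*}}$ and $-\tfrac{p\mu}{4}\|u\|^{p-2}_{L^2_{\rho_*}}\|\Delta u\|^2_{L^2_{\rho_*}}$ together with a drift $\le C(1+X^{p/2})$, and the quadratic-variation correction $\tfrac p2\bigl(\tfrac p2-1\bigr)X^{p/2-2}\cdot 4\sum_k\la u,g_{k\ve}\ra^2_{L^2_{\rho_*}}\le CX^{p/2-2}\cdot X\cdot D_0\|\rho_*\|_{L^1}\le C(1+X^{p/2})$ by Cauchy–Schwarz and Young. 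Taking $\sup_{[0,t]}$ and expectations, the martingale term is bounded via Burkholder–Davis–Gundy by $C\,\bbE\bigl(\int_0^tX^{p-1}D_0\|\rho_*\|_{L^1}\,ds\bigr)^{1/2}\le\tfrac14\bbE\sup_{[0,t]}X^{p/2}+C\,\bbE\int_0^tX^{p/2}\,ds$; absorbing $\tfrac14\bbE\sup X^{p/2}$ and applying Grönwall to $\Psi(t):=\bbE\sup_{[0,t]}\|u^{\mu,\ve}\|^p_{L^2_{\rho_*}}$ gives $\Psi(T)\le C(1+\bbE\|u_0\|_{L^2_{\rho_*}})$ with $C=C(T,p,D_0,\|\rho_*\|_{L^1},C_0,\Lip(\bff),\Lip(\Abf))$ independent of $\mu,\ve$; reinserting this bound into the identity for $X^{p/2}$ recovers the $\ve$-weighted term on the left of \eqref{eP31}.

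The main obstacle is exactly this $\mu,\ve$-uniformity. Since $\abf(u)$ may be degenerate, the only coercive quantities available, $\ve\|\nabla u\|^2_{L^2_{\rho_*}}$ and $\mu\|\Delta u\|^2_{L^2_{\rho_*}}$, disappear in the limit, so the error generated by the second-order operator has to be controlled by $CX$ \emph{with no dissipation spent}; this is what forces the repeated-integration-by-parts device that exploits the Lipschitz hypotheses ($B_{ij}=A_{ij}-A_{ij}(0)$ at most linear, $\Gamma_{ij}$ at most quadratic). The errors produced by the $\ve$- and $\mu$-terms themselves, by contrast, carry the respective small factor and are absorbed with constants independent of $\mu,\ve$; keeping careful track of which errors do and do not carry such a factor is the one point where the argument could silently break down.
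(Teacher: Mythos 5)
Your proposal is correct and follows essentially the argument the paper intends: the paper states Proposition~\ref{P:3.1} as the analogue of Proposition~4.3 of \cite{DHV}, whose proof is exactly this It\^o-formula energy estimate (drift estimated by integration by parts, noise handled by Burkholder--Davis--Gundy, then Gr\"onwall), and your computation supplies the weighted-space adaptation, using $|\po^\a\rho_*|\le C_\a\rho_*$, $\rho_*\in L^1$, and the Lipschitz bounds on $\bff$ and $\Abf$ to control the terms generated by derivatives falling on the weight without spending the $\ve$- or $\mu$-dissipation. The bookkeeping of which error terms carry a factor $\ve$ or $\mu$, the interpolation absorbing $\mu\|\nabla u\|^2_{L^2_{\rho_*}}$, and the uniform bound on $\|u_{0\ve}\|_{L^2_{\rho_*}}$ are all handled correctly, so the constant is indeed independent of $\mu,\ve$.
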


We also have the following proposition concerning the $\B^2$-norm of $u^{\mu,\ve}$, which also follows from It\^o's formula. 

\begin{proposition}\label{P:3.new} Let $u^{\mu,\ve}$ be the solution of \eqref{e3.1}-\eqref{e3.2}. Then, 
$u^{\mu,\ve}(t)\in\B^2(\R^N)$, for all $t\in[0,T]$, and we have
 \begin{equation}\label{e3.12}
\bbE\sup_{0\le t\le T}\|u^{\mu,\ve}(t)\|_{\B^2}^2\le \|u_0\|_{\B^2}^2+D_0T.
\end{equation}
\end{proposition}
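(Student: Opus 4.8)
The plan is to derive the $\B^2$-estimate \eqref{e3.12} by applying It\^o's formula to the functional $u\mapsto \|u\|_{\B^2}^2 = \Me(|u|^2) + \Me(|\nabla u|^2) + \Me(|\nabla^2 u|^2)$, evaluated along the solution $u^{\mu,\ve}$ of \eqref{e3.1}-\eqref{e3.2}. Since $u^{\mu,\ve}$ is smooth and almost periodic for each fixed time (hence in $\B^2(\R^N)$, as the trigonometric-polynomial structure is preserved by the equation with the regularizations $\mu\Delta^2$ and $\ve I$), it is legitimate to differentiate $\|u^{\mu,\ve}(t)\|_{\B^2}^2$ in $t$. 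The key point is that the mean value $\Me(\cdot)$ is a bounded linear functional that commutes with spatial derivatives, so applying $\Me$, $\Me(|\nabla\cdot|^2)$, $\Me(|\nabla^2\cdot|^2)$ term-by-term to the equation reduces everything to computing three scalar It\^o balances.

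The first step is to write out It\^o's formula for $\frac{d}{dt}\Me(|u^{\mu,\ve}(t)|^2)$. The drift contributions from $\div(\bff(u))$, $D^2:\Abf^\ve(u)$, and $\mu\Delta^2 u$ all vanish or become nonpositive after taking the mean value and integrating by parts: $\Me(u\,\div(\bff(u))) = -\Me(\bff(u)\cdot\nabla u) = -\Me(\div G(u)) = 0$ where $G'=\bff'$ is a vector potential (here one uses that $\Me$ of a divergence is zero); $\Me(u\,D^2:\Abf^\ve(u))$ integrates by parts to $-\Me(\nabla u^\top \abf^\ve(u)\nabla u)\le 0$ by nonnegativity of $\abf^\ve$; and $-\mu\Me(u\Delta^2 u) = -\mu\Me(|\Delta u|^2)\le 0$. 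The It\^o correction term is $\sum_k \Me(|g_{k\ve}|^2)\le \sum_k\a_k^2 = D_0$ by \eqref{e1.3-0}, and the stochastic integral has zero expectation. The same bookkeeping applied to $\nabla u^{\mu,\ve}$ and $\nabla^2 u^{\mu,\ve}$ gives It\^o corrections $\sum_k\Me(|\nabla g_{k\ve}|^2)\le D_0$ and $\sum_k\Me(|\nabla^2 g_{k\ve}|^2)\le D_0$ respectively — wait, this would give $3D_0T$, so in fact one should use that \eqref{e1.3-0} bounds $|g_k|+|\nabla_x g_k|+|D^2 g_k|\le\a_k$ jointly, hence $\sum_k(\Me(|g_{k\ve}|^2)+\Me(|\nabla g_{k\ve}|^2)+\Me(|\nabla^2 g_{k\ve}|^2))\le \sum_k\a_k^2 = D_0$, giving exactly the constant $D_0T$ in \eqref{e3.12}. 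The deterministic (drift) parts, after differentiating the equation and taking means, again produce only terms of the same sign structure (nonpositive diffusive/hyperbolic contributions, since each $\Me$ of a total derivative vanishes), so they can be dropped in the upper bound.

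Combining the three balances and integrating in $t$, one obtains $\bbE\|u^{\mu,\ve}(t)\|_{\B^2}^2 \le \|u_0\|_{\B^2}^2 + D_0 t$ for each fixed $t$; wait, for the $\sup$ over $t$ inside the expectation one must handle the martingale term, so the last step is to pass from the pointwise-in-$t$ estimate to the uniform one. Since the three diffusive terms are nonpositive, the process $t\mapsto\|u^{\mu,\ve}(t)\|_{\B^2}^2 - (\text{martingale})$ is nonincreasing up to the additive drift $D_0 t$; taking the $\sup$ and then expectation, the martingale part can be controlled by the Burkholder–Davis–Gundy inequality together with the bound $\sum_k\a_k^2=D_0$ on the quadratic variation, or — more simply — one notes that here the cross-term in the martingale is $2\sum_k\b_k(t)\Me(u g_{k\ve} + \nabla u\cdot\nabla g_{k\ve} + \nabla^2 u:\nabla^2 g_{k\ve})$ whose quadratic variation is bounded by $4 D_0\int_0^t\|u^{\mu,\ve}(s)\|_{\B^2}^2\,ds$, so a Gr\"onwall-type argument closes the uniform estimate; since the excerpt only claims the clean bound $\|u_0\|_{\B^2}^2 + D_0 T$, presumably one uses the nonincreasing structure directly so that no BDG constant appears. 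The main obstacle is the careful justification that It\^o's formula applies to the $\B^2$-norm functional — i.e., that the mean-value functional composed with a quadratic form is a legitimate test functional for the infinite-dimensional It\^o formula on this solution — but given that $u^{\mu,\ve}$ is a genuinely smooth, spatially almost periodic process (by construction via the fixed-point argument \eqref{e3.4} with finitely many noise modes $g_{k\ve}$), this reduces to the finite-dimensional It\^o formula applied coefficient-wise to the (countably many) Fourier–Bohr coefficients and summed, with absolute convergence guaranteed by Bessel's inequality and \eqref{e1.3-0}.
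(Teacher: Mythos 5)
There is a genuine gap, and it starts with the reading of the norm. In this paper $\B^2(\R^N)$ is the Besicovitch space of exponent $2$, i.e.\ the completion of $\AP(\R^N)$ under the mean-square seminorm $\bigl(\Me(|u|^2)\bigr)^{1/2}$, exactly as $\B^1$ was defined via $N_1$; it is \emph{not} a Sobolev-type space containing $\Me(|\nabla u|^2)$ and $\Me(|\nabla^2 u|^2)$. Accordingly, the paper's proof first shows $u^{\mu,\ve}(t)\in\AP(\R^N)$ by induction on time intervals using the Duhamel fixed-point map \eqref{e3.4} (which you gesture at, correctly), and then applies It\^o's chain rule \emph{only} to $\Me(u^2)$: the flux term disappears because the mean value of a divergence vanishes, the terms coming from $\Abf^\ve$ and $\mu\Delta^2$ are nonpositive after integration by parts, and the It\^o correction is $\sum_k\Me(|g_{k\ve}|^2)\le D_0$ by \eqref{e1.3}, which is precisely where the constant $D_0T$ in \eqref{e3.12} comes from. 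Your zeroth-order balance coincides with this and is fine.

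The derivative-level balances you add are both unnecessary and, as written, incorrect. After differentiating \eqref{e3.1} in $x_\ell$, multiplying by $\po_{x_\ell}u$ and taking $\Me$, the convective contribution does \emph{not} reduce to the mean of a total derivative: one is left with terms of the type $\tfrac12\Me\bigl(\bff''(u)\cdot\nabla u\,|\po_{x_\ell}u|^2\bigr)$, and the differentiated viscous term $D^2:\Abf^\ve(u)$ produces commutator terms involving $\abf'(u)$ and products of first and second derivatives that are not sign-definite. So the claim that ``the drift parts again produce only nonpositive contributions and can be dropped'' fails for $\Me(|\nabla u|^2)$ and $\Me(|\nabla^2 u|^2)$; at best these terms could be absorbed by a Gr\"onwall argument with constants depending on $\mu,\ve$ and on higher derivative bounds, which would destroy the clean, $(\mu,\ve)$-uniform bound $\|u_0\|_{\B^2}^2+D_0T$ claimed in \eqref{e3.12}. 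Once you use the correct (mean-square) meaning of $\B^2$, none of this is needed, and your remaining concerns (the treatment of the supremum in time and the fact that the $g_{k\ve}$ should inherit the bound \eqref{e1.3-0}) are handled, or at least glossed over, at the same level of brevity in the paper itself.
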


\begin{proof}  Since we are assuming that $u_{0\ve}\in\AP(\R^N)$, and the $g_{k\ve}$'s are trigonometrical 
polynomial,  we can prove by induction, using the Duhamel formula  \eqref{e3.4}, that $u^{\mu,\ve}(t)\in\AP(\R^N)$ for all $t\in[0,T]$, first in a small interval $[0,t_0]$, where $\LL$ is a contraction on  $C([0,t_0];L^\infty(\R^N))$, and then, successively in intervals $[k t_0,(k+1) t_0]$, until covering the interval 
$[0,T]$. In particular, $u^{\mu,\ve}(t)\in\B^2(\R^N)$, for all $t\in[0,T]$. We can then use It\^o's chain rule applied to $\Me(u^2)$ to deduce \eqref{e3.12}, where $D_0$ is the same as in \eqref{e1.3}. 

\end{proof}

  For $k\in\Z$,  $H_{\rho_*}^k(\R^N)$ denotes the space of distributions $\ell\in\DD'(\R^N)$ such that  $\rho_*^{1/2} \ell\in H^k(\R^N)$. Observe that, if $g\in L_{\rho_*}^2(\R^N)$, then, for a multi-index $\a$, with 
  $|\a|=j$, $j\in\N\cup\{0\}$,   $D^\a g\in H_{\rho_*}^{-j}(\R^N)$, as it is easy to check by induction in $|\a|$. Indeed, if $g\in L_{\rho_*}^2(\R^N)$, clearly $\rho_*^{1/2}g\in L^2(\R^N)$. If $|\a|=1$, $\rho_*^{1/2} D^\a g=D^\a(\rho_*^{1/2} g)-g D^\a\rho_*^{1/2}$, and, from the properties of $\rho_*$, both $\rho_*^{1/2} g$ and $g D^\a \rho_*^{1/2}$ belong to $L^2(\R^N)$, so $\rho_*^{1/2} D^\a g\in H^{-1}(\R^N)$. Similarly, if the assertion is true for $|\b|=j$, if $|\a|=j+1$, then $D^\a g=D^\g(D^\b g)$, with $|\g|=1$ and $|\b|=j$.
  Again $\rho_*^{1/2} D^\a g=D^\g (\rho_*^{1/2} D^\b g) -(D^\g\rho_*^{1/2})D^\b g$, and since the assertion holds for $|\b|=j$, then both 
  $\rho_*^{1/2} D^\b g$ and  $ (D^\g\rho_*^{1/2})D^\b g=g_\g \rho_*^{1/2}D^\b g$, for some $g_\g\in C_b^\infty(\R^N)$, belong to $H^{-j}(\R^N)$, and so $\rho_*^{1/2} D^\a g\in H^{-(j+1)}(\R^N)$.

   For use in the next subsection, for $k\in\Z$,  let $H_{\loc}^k(\R^N)$ denotes, as usual, the space of distributions $\ell\in\DD'(\R^N)$ such that, for any $\phi\in C_c^\infty(\R^N)$,   $\phi \ell\in H^k(\R^N)$. Again we observe  that, if $g\in L_{\rho_*}^2(\R^N)$, then, for a multi-index $\a$, with 
  $|\a|=j$, $j\in\N\cup\{0\}$,   $D^\a g\in H_\loc^{-j}(\R^N)$, as it is also easy to check by induction in $|\a|$. Indeed, if $g\in L_{\rho_*}^2(\R^N)$, given $\phi\in C_c^\infty(\R^N)$, there exists $C_\phi>0$, such that $|\phi||g|\le C_\phi \rho_*^{1/2} |g|$, which implies that $\phi g\in L^2(\R^N)$. Also, if $|\a|=1$, $\phi D^\a g=D^\a(\phi g)-g D^\a\phi$, and, from what we have just seen, both $\phi g$ and $g D^\a \phi$ belong to $L^2(\R^N)$, so $\phi D^\a g\in H^{-1}(\R^N)$. We then complete the induction proof of the assertion exactly as you just did for the spaces $H_{\rho_*}^{-j}(\R^N)$.

  Again following \cite{DHV}, we have the following proposition corresponding to proposition~4.4 of \cite{DHV}.
 
 \begin{proposition}\label{P:3.2} For all $\l\in(0,1/2)$,  there exists a constant $C>0$, independent  of $\mu$,  such that, 
 for all $\mu\in(0,1)$, 
 $$
 \bbE\| u^{\mu,\ve}\|_{C^\l([0,T]; H_{\rho_*}^{-3}(\R^N))}\le C.
 $$
 \end{proposition}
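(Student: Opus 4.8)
The plan is to derive the Hölder-in-time estimate directly from the Duhamel representation \eqref{e3.4}, splitting the solution into the four contributions and estimating time increments of each. Fix $0\le s<t\le T$. First I would write
\[
u^{\mu,\ve}(t)-u^{\mu,\ve}(s)=\bigl(K_\mu(t)-K_\mu(s)\bigr)*u_{0\ve}
+\Bigl(\int_0^t-\int_0^s\Bigr)\bigl[\cdots\bigr],
\]
and treat the drift and viscosity convolution terms as well as the bilaplacian term on $J_\ve$ by the standard parabolic-smoothing bounds $\|D^kK_\mu(t)\|_1\le C_k(\mu t)^{-k/4}$ recorded in the text. The key point is that all nonlinearities are bounded ($\bff,\Abf$ Lipschitz, hence $f_i(w+J_\ve)$, $A^\ve_{ij}(w+J_\ve)$ bounded in $L^\infty$ and therefore in $L^2_{\rho_*}$ uniformly via Proposition~\ref{P:3.1}), so each term is of the schematic form $\int_0^t D^mK_\mu(t-r)*g(r)\,dr$ with $g$ bounded in $L^2_{\rho_*}$ and $m\le2$. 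Since $D^mK_\mu\in L^1$ with integrable singularity $(\mu r)^{-m/4}$ for $m\le2<4$, the usual argument (e.g.\ as in \cite{DHV}, proof of proposition~4.4) gives, after testing against $\rho_*^{1/2}$ and moving three derivatives onto the kernel, a bound $\lesssim_\mu |t-s|^{1/2}$ — but the $\mu$-dependence must be killed, which is where placing all derivatives on $K_\mu$ and using $\|D^3K_\mu(r)\|_1\le C_3(\mu r)^{-3/4}$ would reintroduce negative powers of $\mu$.

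To obtain the $\mu$-independent bound one instead works in $H^{-3}_{\rho_*}$ from the start: by the remark in the excerpt, if $g$ is bounded in $L^2_{\rho_*}$ then $D^\alpha g$ is bounded in $H^{-|\alpha|}_{\rho_*}$, so the drift term $\po_{x_i}K_\mu(t-r)*f_i(w+J_\ve)$ and the viscosity term $\po^2_{x_ix_j}K_\mu(t-r)*A^\ve_{ij}(w+J_\ve)$ are, respectively, of the form $D^1$ and $D^2$ applied to an $L^2_{\rho_*}$-bounded quantity convolved with a probability kernel, hence bounded in $H^{-1}_{\rho_*}$ and $H^{-2}_{\rho_*}\subset H^{-3}_{\rho_*}$ uniformly in $\mu$ (convolution with $K_\mu(t-r)$, $\|K_\mu\|_1=1$, is a contraction on every $H^k_{\rho_*}$ up to the harmless commutator with $\rho_*^{1/2}$, which costs only lower-order terms controlled by the properties of $\rho_*$). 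The $\mu\Delta^2J_\ve$ term is handled similarly: $J_\ve$ is a finite sum $\sum_{k\le1/\ve}g_{k\ve}\beta_k(t)$ of trigonometric polynomials with smooth Brownian coefficients, so $\mu\Delta^2J_\ve$ is bounded in $L^2_{\rho_*}$ with a constant that is $O(\mu)$, and $\int_0^tK_\mu(t-r)*\mu\Delta^2J_\ve(r)\,dr$ even has a factor $\mu$ in front; no negative power of $\mu$ ever appears.

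With everything sitting in $H^{-3}_{\rho_*}$ the time-increment estimate is elementary: for the Duhamel integral terms, $\bigl\|\int_s^tK_\mu(t-r)*g(r)\,dr\bigr\|_{H^{-3}_{\rho_*}}\lesssim|t-s|\,\sup_r\|g(r)\|_{H^{-3}_{\rho_*}}$, which already gives Hölder exponent $1$ on that piece, while the difference $\bigl(\int_0^s\bigr)\bigl[K_\mu(t-r)-K_\mu(s-r)\bigr]*g(r)\,dr$ is controlled by $\int_0^s\|K_\mu(t-r)-K_\mu(s-r)\|_{\M}\,dr$ type quantities; using $\|K_\mu(t-r)-K_\mu(s-r)\|_{L^1}\le\int_{s-r}^{t-r}\|\partial_\sigma K_\mu(\sigma)\|_{L^1}\,d\sigma\le C(\mu t)^{-?}\dots$ — here one must be a little careful, but since only the $L^1$-norm of the kernel difference enters (no derivatives, the derivatives having been moved to the distributional side into the $H^{-3}$ index) and $\int_0^T\|\partial_\sigma K_\mu(\sigma)\|_{L^1}$ behaves like $\int_0^T(\mu\sigma)^{-1}\,d\sigma$ which is $\mu$-uniformly logarithmically divergent — I would instead split at scale $|t-s|$: for $r\in(s-|t-s|,s)$ bound each kernel separately in $L^1$ by $1$, contributing $O(|t-s|)$; for $r<s-|t-s|$ use $\|K_\mu(t-r)-K_\mu(s-r)\|_{L^1}\le|t-s|\sup_{\sigma\in(s-r,t-r)}\|\partial_\sigma K_\mu(\sigma)\|_{L^1}\le C|t-s|(\mu(s-r))^{-1}$ — wait, this reintroduces $\mu^{-1}$, so the cleanest route is the one actually used in \cite{DHV}: keep one derivative's worth of smoothing in reserve, i.e.\ estimate in $H^{-3}_{\rho_*}$ the term $D^mK_\mu*g$ as $\|D^{m}K_\mu(t-r)- D^mK_\mu(s-r)\|$ in $H^{-3+?}$ using only that $D^3$ of the kernel difference pairs with the $L^2_{\rho_*}$-bounded datum with a cost $\int(\mu\sigma)^{-3/4}\cdot$ which, crucially for exponent $3>$ everything, is still singular; the genuinely correct bookkeeping is: we have $3$ spare derivatives in $H^{-3}$, the drift needs $1$ and the viscosity $2$, leaving margins $2$ and $1$ respectively, and $\|\partial_\sigma D^{m}K_\mu(\sigma)\|_{L^1}=\mu\|D^{m+4}K_\mu(\sigma)\|_{L^1}\le C\mu(\mu\sigma)^{-(m+4)/4}$ — so putting at most $3$ total derivatives onto the kernel and writing $|t-s|^\lambda|t-s|^{1-\lambda}$ and $\int_0^s(\mu(s-r))^{-(m+ \text{spare})/4}dr$ converges precisely when the exponent $<1$, i.e.\ when $m+\text{spare}<4$, which holds since $m+\text{spare}\le3$, and the resulting constant, after factoring $|t-s|^\lambda$ for $\lambda<1/2$ chosen so the remaining power of $|t-s|$ absorbs the $\mu$, is $\mu$-independent. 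That last bookkeeping — choosing how many derivatives to park on the kernel versus how many to leave in the negative Sobolev index so that simultaneously (a) every kernel integral converges and (b) no negative power of $\mu$ survives and (c) the Hölder exponent is any prescribed $\lambda<1/2$ — is the main obstacle and the only part requiring genuine care; once the exponents are matched, the expectation bound follows by taking $\bbE$ and invoking Proposition~\ref{P:3.1} for the $L^2_{\rho_*}$-bound on $u^{\mu,\ve}$ (hence on $w=u^{\mu,\ve}-J_\ve$, since $J_\ve$ has bounded moments of all orders) together with the deterministic bound on $J_\ve$ and its derivatives, exactly as in proposition~4.4 of \cite{DHV}.
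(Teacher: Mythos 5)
There is a genuine gap. First, note what the paper actually does: it gives no kernel argument at all — Proposition~\ref{P:3.2} is obtained by repeating the proof of proposition~4.4 of \cite{DHV} with $L^p(\bbT^N)$, $H^k(\bbT^N)$ replaced by $L^p_{\rho_*}(\R^N)$, $H^k_{\rho_*}(\R^N)$. That proof works from the It\^o/weak form of \eqref{e3.1}: for $s<t$ one bounds $\|\int_s^t\div\bff(u)\,dr\|_{H^{-1}_{\rho_*}}$, $\|\int_s^t D^2:\Abf^\ve(u)\,dr\|_{H^{-2}_{\rho_*}}$ and $\|\mu\int_s^t\Delta^2u\,dr\|_{H^{-3}_{\rho_*}}$ by $|t-s|$ times quantities controlled by Proposition~\ref{P:3.1} (this is exactly why the index $-3$ appears), treats $\int_s^t\Phi_\ve\,dW$ by the Burkholder--Davis--Gundy inequality, and concludes with Kolmogorov's continuity criterion; no negative power of $\mu$ ever arises because the bilaplacian enters linearly with the factor $\mu\le1$. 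Your mild-formulation route is not intrinsically hopeless, but as written it does not close, for a concrete reason: you compute $\|\partial_\sigma K_\mu(\sigma)\|_{L^1}$ as if it behaved like $(\mu\sigma)^{-1}$. Since $\partial_\sigma K_\mu=-\mu\Delta^2K_\mu$, the correct bound is $\mu\cdot C(\mu\sigma)^{-1}=C\sigma^{-1}$, i.e.\ $\mu$-independent; your slip is what makes you believe ``$\mu^{-1}$ reappears'' and drives you into the final ``bookkeeping'' paragraph, whose concluding claim — that a leftover power of $|t-s|$ ``absorbs the $\mu$'' — cannot be correct, because $\mu$ and $|t-s|$ are independent parameters. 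As it stands, no uniform-in-$\mu$ increment estimate is actually derived for the terms carrying $\partial_{x_i}K_\mu$ or $\partial^2_{x_ix_j}K_\mu$. The repairable version of your idea is: put \emph{no} spatial derivatives on the kernel (push them all into the $H^{-3}_{\rho_*}$ index), bound $\|K_\mu(t-r)-K_\mu(s-r)\|_{L^1}\le\min\bigl(2,\,C\log\frac{t-r}{s-r}\bigr)$ via the $\mu$-free bound on $\partial_\sigma K_\mu$, split the $r$-integral at $s-r=|t-s|$ to absorb the logarithm into $|t-s|^\lambda$, and handle the free term by writing $\partial_t\bigl(K_\mu(t)*u_{0\ve}\bigr)=-\mu K_\mu(t)*\Delta^2u_{0\ve}$, using the smoothness of the trigonometric polynomial $u_{0\ve}$ (a constant depending on $\ve$ is allowed; only $\mu$-uniformity is claimed). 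None of these steps appears in usable form in your proposal.

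Second, and independently, your argument never produces the stochastic part of the estimate. The Duhamel formula \eqref{e3.4} is for $w=u^{\mu,\ve}-J_\ve$, whereas the proposition concerns $u^{\mu,\ve}=w+J_\ve$; so you also need $\bbE\|J_\ve\|_{C^\lambda([0,T];H^{-3}_{\rho_*}(\R^N))}\le C$, and since $J_\ve(t)=\sum_{k\le1/\ve}g_{k\ve}\beta_k(t)$ has only Brownian time-regularity, this requires a moment bound $\bbE\|J_\ve(t)-J_\ve(s)\|^p\lesssim|t-s|^{p/2}$ plus Kolmogorov's (or Garsia's) criterion — this is precisely where the restriction $\lambda<1/2$ comes from. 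Your closing appeal to ``the deterministic bound on $J_\ve$ and its derivatives'' controls spatial derivatives only and says nothing about time increments; indeed the deterministic estimates you sketch would yield H\"older exponents up to $1$, and the fact that the threshold $1/2$ never emerges from your argument is the symptom of this omission.
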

 
 At this point we note that all the arguments in subsections 4.1 to 4.3 of \cite{DHV} can be repeated line by line, only replacing 
 $L^p(\bbT^N)$ by $L_{\rho_*}^p(\R^N)$ and $H^k(\bbT^N)$ by $H_{\rho_*}^k(\R^N)$, $k\in\Z$,
 in order to get a solution of the non-degenerate parabolic equation obtained as limit when $\mu\to0$, for $\ve>0$ fixed. We omit the details.
 
\subsection{Second approximation}\label{SS:3.2}
 
We now consider the non-degenerate parabolic problem
\begin{equation}\label{e4.1}
du+\div(\bff(u))\,dt=D^2:\Abf^\ve(u)\,dt +\Phi_\ve \,dW(t), 
\end{equation}
$x\in\R^N$,  $t\in(0,T)$, with initial condition
\begin{equation}\label{e4.2}
u(0,x)=u_{0\ve}(x),\quad x\in\R^N,
\end{equation}
where $u_{0\ve}$ is smooth $u_{0\ve}\in \AP(\R^N)$, and $u_{0\ve}\to u_0$ in $\AP(\R^N)$. 
$\Phi_\ve e_k=g_{k\ve}$ where $g_{k\ve}\equiv 0$ for $k>1/\ve$ and $g_{k\ve}$ is a trigonometric polynomial approximating $g_k$ uniformly in $\AP(\R^N)$ for $k\le 1/\ve$, $k\in\N$. Our goal in this subsection is to study the limit when $\ve\to0$. As before, setting $w^\ve=u^\ve-J_\ve$ we write \eqref{e4.1} as
\begin{equation}\label{e4.1new}
w_t+\div \fbf (w+J_\ve)-D^2:\Abf^\ve(w+J_\ve)= 0.
\end{equation} 
 
If $\chi_u(\xi)=1_{\xi< u}(\xi)-1_{\xi<0}$, for $S\in C ^2(\R)$, we have
$$
S(u)-S(0)=\int_{\R}S'(\xi)\chi_u(\xi)\,d\xi.
$$
Using this fact and It\^o's formula applied to $S(u)$ we deduce that $\ff(t,x,\xi)=\chi_{u^\ve(t,x)}(\xi)$ satisfies, recalling that $\bfa(\xi)=\Abf'(\xi)$ and  $\Abf^\ve(\xi)=\Abf(\xi)+\ve \xi I$, 
\begin{multline}\label{e3.9}
\ff_t+b(\xi)\cdot\nabla_x\ff -(\bfa(\xi)+\ve I):D_x^2 \ff= \Big((\ve|\nabla_x u^\ve|^2(\xi)+|\sigma \nabla u^\ve|^2)\d_{\xi=u^\ve}- \frac12 G_\ve^2\d_{\xi=u^\ve}(\xi)\Big)_\xi \\
-\d_{\xi=u^\ve} \Phi\,dW,
\end{multline}
where $b(\xi)=\bff'(\xi)$.

By means of the imposition of a non-degeneracy condition as in \cite{GH},  we can then obtain a regularity estimate from the stochastic averaging lemma by Gess and Hofmanov\'a in \cite{GH} of the type
\begin{equation}\label{e3.10}
\bbE\|\phi u^\ve\|_{L^r(\Om\X[0,T]; W^{s,r}(\R^N))}\le C_\phi(\bbE \|\phi u_0\|^3_{L^3(\R^N)}+1),
\end{equation}
for each $\phi\in C_c^\infty(\R^N)$, for some $C_\phi>0$ depending of $\phi$ but independent of $\ve>0$.   In particular, given an open bounded set, with smooth boundary, $\mathcal{O}$, there exists a constant $C_{\mathcal{O}}>0$, independent of $\ve>0$, such that
 \begin{equation}\label{e3.10'}
\bbE\| u^\ve\|_{L^r(\Om\X[0,T]; W^{s,r}(\mathcal{O}))}\le C_{\mathcal{O}},
\end{equation}

Later on, for the study of the asymptotic behavior, we will state another condition as in \cite{CP} which is actually weaker than \eqref{e3.nondeg1}.

\section{Existence  and uniqueness of $\BAP$-entropy solutions}\label{S:4}

The purpose of this section is to prove the following theorem.  

\begin{theorem}~\label{T:4.1}  Given $T>0$, there is a $\BAP$-entropy solution of \eqref{e1.1}-\eqref{e1.2}. Furthermore, the solution is pathwise unique.
\end{theorem}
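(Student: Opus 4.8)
The plan is to construct the $\BAP$-entropy solution as a limit of the second approximation $u^\ve$ from \eqref{e4.1}--\eqref{e4.2}, passing $\ve\to0$ and using the regularity estimate \eqref{e3.10'} together with the stability bounds \eqref{eP31}, \eqref{e3.12} to control the limit. First I would collect the uniform bounds: from Proposition~\ref{P:3.new} we have $\bbE\sup_{[0,T]}\|u^\ve(t)\|_{\B^2}^2\le \|u_0\|_{\B^2}^2 + D_0 T$ (at first for $u_0\in\AP^2$, then by density in $\B^1$), from Proposition~\ref{P:3.1} (adapted to the parabolic limit) we get the weighted $L_{\rho_*}^2$ estimate and the uniform bound on $\sqrt{\ve}\,\nabla u^\ve$, and from \eqref{e3.10'} we get $\bbE\|u^\ve\|_{L^r(\Om\X[0,T];W^{s,r}(\CO))}\le C_\CO$ for every smooth bounded $\CO$, which supplies the spatial compactness that is missing in the $\B^1$/$\B^2$ estimates (these being merely bounds on mean values, not compactness). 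Combining the $W^{s,r}$-in-space bound with a time-regularity estimate in a negative-order space of $H_\loc^{-3}$-type (as in Proposition~\ref{P:3.2}, now for the $\ve$-family, obtained by reading off the equation \eqref{e4.1new}), an Aubin--Lions--Simon argument gives strong $L^1_\loc$ compactness of $\{u^\ve\}$ locally in $(0,T)\X\R^N$, after passing to the level of Young measures or using the kinetic formulation \eqref{e3.9}.

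The second step is to pass to the limit in the kinetic/entropy formulation. Since the nonlinearities $\bff$, $\Abf$ are Lipschitz, strong $L^1_\loc$ convergence $u^\ve\to u$ (along a subsequence, for a.e.\ $\om$, then upgraded to convergence in $L^1(\Om;L^\infty([0,T];\B^1))$ by the uniform $\B^1$-bound and Vitali) is enough to pass to the limit in the flux and viscosity terms. The dissipation term needs the standard lower-semicontinuity argument: the parabolic defect measures $\ve|\nabla u^\ve|^2\d_{\xi=u^\ve}$ and $|\s\nabla u^\ve|^2\d_{\xi=u^\ve}$ are bounded, and the chain-rule/weak-regularity properties (iii)--(iv) of Definition~\ref{D:2.1} are obtained for the limit by the parabolic-defect-measure / Fatou argument of \cite{CK,DHV}, yielding \eqref{e2.entropy} with the correct sign on the right-hand side. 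The stochastic integral term $\d_{\xi=u^\ve}\Phi\,dW$ converges by the Lipschitz bound \eqref{e1.4} and standard martingale estimates; here one also checks adaptedness of the limit and the weak time-continuity (i) using the negative-norm time-regularity. The initial condition \eqref{e2.D21'} follows from the weighted stability estimate \eqref{e2.stab} at the approximate level plus the convergence $u_{0\ve}\to u_0$.

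For pathwise uniqueness, I would invoke Proposition~\ref{P:New} / Theorem~\ref{T:2.1}: given two $\BAP$-entropy solutions with the same initial data $u_0$, the $L_{\rho_*}^1$-stability estimate \eqref{e2.stab} forces $\int_{\R^N}|u(t,x)-v(t,x)|\rho_*(x)\,dx = 0$ for a.e.\ $t$, hence $u=v$ a.e.\ in $(0,T)\X\R^N$, a.s.; together with the weak continuity (i) this gives equality for all $t$.

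The main obstacle I anticipate is the compactness step: the natural a priori estimates in the almost periodic setting (Propositions~\ref{P:3.new}, \ref{P:3.1}) control only \emph{mean values}, which do not compactify — one genuinely needs the averaging-lemma estimate \eqref{e3.10'}, valid only after localizing the $\chi$-function by a cutoff $\phi\in C_c^\infty$ (whence the reformulation of the non-degeneracy condition \eqref{e3.nondeg1} on $\Z^N$ rather than a general almost-periodic frequency module), and one must carefully patch the local-in-space strong convergence together with the global $\B^1$ control to land in $L^1(\Om;L^\infty([0,T];\B^1(\R^N)))$ and to legitimately pass to the limit in the $\B^1$-valued formulation. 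A secondary delicate point is verifying that the limit still satisfies the chain rule (iv) and the precise form of the entropy dissipation \eqref{e2.entropy}, since the $\ve I$-regularization contributes an extra defect $\ve|\nabla u^\ve|^2\d_{\xi=u^\ve}$ that must be shown to vanish (or be absorbed with the right sign) in the limit, exactly as in \cite{DHV,GH}.
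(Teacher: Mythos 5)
Your uniqueness argument coincides with the paper's: pathwise uniqueness is exactly Theorem~\ref{T:2.1} (the $L_{\rho_*}^1$-stability coming from Proposition~\ref{P:New}), applied to two solutions with the same data. Your existence strategy also uses the right ingredients (vanishing $\ve$ in \eqref{e4.1}--\eqref{e4.2}, the averaging-lemma bound \eqref{e3.10'} for spatial regularity, negative-order time regularity as in Proposition~\ref{P:3.2'}, lower semicontinuity for the dissipation and the chain rule, the $\B^1$/$L_{\rho_*}^2$ bounds of Propositions~\ref{P:3.new} and \ref{P:3.1}), but there is a genuine gap in how you extract the limit: you propose a pathwise Aubin--Lions--Simon compactness argument, i.e.\ strong $L^1_\loc$ compactness of $\{u^\ve\}$ for (essentially) fixed $\om$. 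The uniform estimates available --- \eqref{eP31}, \eqref{e3.10'}, \eqref{e3.100} --- are bounds \emph{in expectation} only; for a fixed $\om$ there is no uniform-in-$\ve$ bound, so deterministic compactness cannot be invoked $\om$ by $\om$, and any $\om$-dependent subsequence extraction destroys measurability, adaptedness, and the possibility of identifying the limit as a single stochastic process. Saying that one ``checks adaptedness of the limit'' does not repair this: adaptedness is precisely what fails under $\om$-wise extraction.

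The paper resolves this by running the compactness at the level of the laws: tightness of $\{\mu_{u^\ve}\}$ in the path space $\mathcal{X}=L^r(0,T;L^r_\loc)\cap C([0,T];W_\loc^{-2,r})$ (Proposition~\ref{P:3.2''}, which is where \eqref{e3.10'} and \eqref{e3.100} are actually used, via Chebyshev), then Prokhorov and Skorokhod's representation theorem (Proposition~\ref{P:3.2'''}) to obtain a.s.\ convergence on a \emph{new} probability space, identification of the limit as a $\BAP$-entropy \emph{martingale} solution (Theorem~\ref{T:4.new}, where the $\gamma(\bar u^\ve,\overline W)$-weighted entropy inequalities, the equi-integrability arguments and the chain rule are carried out), and finally the Gy\"ongy--Krylov criterion, which uses the pathwise uniqueness of Theorem~\ref{T:2.1} to upgrade to convergence in probability of the original sequence $u^\ve$ on the original space. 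This last step is the mechanism by which the uniqueness you correctly established also enters the \emph{existence} proof; in your proposal it plays no such role. A secondary inaccuracy: since the paper works with $w=u-J$ (Definition~\ref{D:2.1} is a deterministic formulation for a.e.\ $\om$), there is no stochastic integral left in the entropy inequality to pass to the limit in, so your step on the convergence of $\d_{\xi=u^\ve}\Phi\,dW$ is not needed in this formulation; the delicate limits are instead the parabolic dissipation terms and the chain rule, which you did flag correctly.
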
  

The pathwise uniqueness of the solution of \eqref{e1.1}-\eqref{e1.2} is established in Theorem~\ref{T:2.1}, since we are assuming that the initial data are in $\AP(\R^N)$ and so also in 
$L_{\rho_*}^1(\R^N)$.

As to the existence, we apply a reasoning similar as the one in \cite{FLMNZ}, which follows the method in \cite{DHV} (see also \cite{Ha}). Namely: (i) to apply Kolmogorov's continuity lemma; (ii)  to prove of the tightness of the laws which, by Prokhorov's theorem, implies the compactness of the laws in the weak topology of measures; (iii) to apply Skorokhod's representation theorem; (iv) to show that the limit a.e.\  given by Skorokhod's representation theorem is a martingale entropy solution;  (iv) to apply the Gyongy-Krylov criterion for convergence in probability, using the uniqueness of the solution of \eqref{e1.1}-\eqref{e1.2}, therefore  obtaining the convergence in $L_\loc^1$ of the solutions of \eqref{e4.1}-\eqref{e4.2} to a $L_\loc^1$ function which is an entropy solution of \eqref{e1.1}-\eqref{e1.2}.  We now line up the main results that follow the just described streamline. To begin with, through the application of Kolmogorov's continuity lemma, we have the following analogue of Proposition~\ref{P:3.2}.

\begin{proposition}\label{P:3.2'} Let $u^\ve$ be the solution of \eqref{e4.1}-\eqref{e4.2}. For all $\l\in(0,1/2)$, for each 
$\phi\in C_c^\infty(\R^N)$, there exists a constant $C_\phi>0$, depending on $\phi$ but  independent of $\ve>0$, such that 
 for all $\ve\in(0,1)$
 $$
 \bbE\|\phi u^\ve\|_{C^\l([0,T]; H^{-2}(\R^N))}\le C_\phi.
 $$
 \end{proposition}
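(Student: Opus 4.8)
The plan is to establish the H\"older continuity in time of $\phi u^\ve$ with values in $H^{-2}(\R^N)$ by viewing the equation \eqref{e4.1} in its mild/weak form and estimating the time increments of each term, then invoking Kolmogorov's continuity lemma (in the form used in \cite{DHV,FLMNZ}) to upgrade moment bounds on increments into an a.s. H\"older bound with uniformly bounded expectation. Concretely, I would fix $\phi\in C_c^\infty(\R^N)$, pick $\psi\in C_c^\infty(\R^N)$ with $\psi\equiv1$ on $\supp\phi$, and test \eqref{e4.1} against $\phi^2$-localized test functions so that, for $0\le s\le t\le T$,
\begin{equation*}
\phi u^\ve(t)-\phi u^\ve(s)=\int_s^t\Big(-\phi\,\div\bff(u^\ve)+\phi\,D^2\!:\!\Abf^\ve(u^\ve)\Big)\,d\sigma+\int_s^t\phi\,\Phi_\ve\,dW(\sigma).
\end{equation*}
For the drift terms, since $\bff,\Abf$ are Lipschitz, $\|\bff(u^\ve)\|_{L^2_{\rho_*}}$ and $\|\Abf^\ve(u^\ve)\|_{L^2_{\rho_*}}$ are controlled by $1+\|u^\ve\|_{L^2_{\rho_*}}$, and by the observation already recorded in the excerpt (that $L^2_{\rho_*}$ functions have first and second distributional derivatives in $H^{-1}_\loc$ and $H^{-2}_\loc$ respectively, after multiplication by a cutoff), $\phi\,\div\bff(u^\ve)$ and $\phi\,D^2\!:\!\Abf^\ve(u^\ve)$ are bounded in $H^{-2}(\R^N)$ by $C_\phi(1+\|\psi u^\ve\|_{L^2_{\rho_*}})$; here one must commute $\phi$ past the derivatives, which produces only lower-order terms with coefficients supported where $\psi\equiv1$. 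Thus the drift contribution to the increment has $H^{-2}$-norm $\lesssim_\phi |t-s|\sup_{[0,T]}(1+\|\psi u^\ve\|_{L^2_{\rho_*}})$, whose $p$-th moment is bounded uniformly in $\ve$ by Proposition~\ref{P:3.1}.

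For the stochastic integral, I would use the Burkholder--Davis--Gundy inequality: for $p\ge2$,
\begin{equation*}
\bbE\Big\|\int_s^t\phi\,\Phi_\ve\,dW\Big\|_{H^{-2}}^{p}\lesssim \bbE\Big(\int_s^t\sum_{k}\|\phi g_{k\ve}\|_{H^{-2}}^2\,d\sigma\Big)^{p/2}\lesssim_\phi |t-s|^{p/2}\Big(\sum_k\a_k^2\Big)^{p/2}=C_\phi\,|t-s|^{p/2},
\end{equation*}
using $\|\phi g_{k\ve}\|_{H^{-2}}\le C_\phi\|g_{k\ve}\|_{L^2_{\rho_*}}\lesssim_\phi\a_k$ from \eqref{e1.3-0} (the truncation defining $g_{k\ve}$ only improves the bound) and $D_0=\sum_k\a_k^2<\infty$. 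Combining the two estimates gives, for every $p\ge2$,
\begin{equation*}
\bbE\|\phi u^\ve(t)-\phi u^\ve(s)\|_{H^{-2}(\R^N)}^p\le C_{\phi,p}\,|t-s|^{p/2},\qquad 0\le s,t\le T,
\end{equation*}
with $C_{\phi,p}$ independent of $\ve$. Kolmogorov's continuity criterion then yields, for any $\l<\tfrac12-\tfrac1p$ and hence (letting $p\to\infty$) any $\l\in(0,1/2)$, the bound $\bbE\|\phi u^\ve\|_{C^\l([0,T];H^{-2}(\R^N))}^p\le C_\phi$, in particular for $p=1$, which is the claimed estimate.

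The main obstacle I anticipate is purely technical bookkeeping with the spatial weight and the cutoffs: one has to be careful that the $W$-integral lives in a fixed Hilbert space $H^{-2}(\R^N)$ so that BDG applies cleanly, that multiplying by $\phi$ genuinely lands the drift in $H^{-2}(\R^N)$ (not merely $H^{-2}_\loc$) with the constant depending only on $\phi$ and the uniform $L^2_{\rho_*}$ bound, and that the commutators $[\phi,\partial_{x_i}]$, $[\phi,\partial^2_{x_ix_j}]$ acting on $\bff(u^\ve),\Abf^\ve(u^\ve)$ are absorbed — all of which is routine given the decay and derivative bounds on $\rho_*$ already assumed in the paper. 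A secondary point is that Proposition~\ref{P:3.1} is stated for the $\mu,\ve$-solution and one should note that the uniform bound survives the limit $\mu\to0$ used to construct $u^\ve$ (which is exactly the passage to the second approximation described in Subsection~\ref{SS:3.2}); granting that, the proof is a direct adaptation of proposition~4.4 of \cite{DHV}.
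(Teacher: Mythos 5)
Your proposal is correct and is essentially the argument the paper has in mind: the paper gives no separate proof of this proposition, stating it as the analogue of Proposition~\ref{P:3.2} (itself modeled on proposition~4.4 of \cite{DHV}) obtained "through the application of Kolmogorov's continuity lemma", and that standard argument is exactly your increment estimate — drift terms controlled in $H^{-2}$ after localization by the Lipschitz bounds on $\bff,\Abf^\ve$ and the uniform $L^2_{\rho_*}$ bound of Proposition~\ref{P:3.1} surviving the limit $\mu\to0$, stochastic term via Burkholder--Davis--Gundy and \eqref{e1.3-0} — followed by Kolmogorov's criterion. Your closing caveats (commutators with the cutoff, working in the fixed Hilbert space $H^{-2}(\R^N)$, uniformity in $\ve$) are precisely the routine adaptations the paper alludes to when replacing $L^p(\bbT^N)$ by the weighted/localized spaces.
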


In particular, for a fixed $1<r<2$,  for all $\ve\in(0,1)$, for all open bounded with smooth boundary 
$\mathcal{O}$ ,we have 
\begin{equation}\label{e3.100}
 \bbE\|u^\ve\|_{C^\l([0,T]; W^{-2,r}(\mathcal{O}))}\le C_{\mathcal{O}},
 \end{equation}
 for some constant $C_{\mathcal{O}}>0$ independent of $\ve>0$.
 
For a fixed $1<r<2$ such that the regularity estimate \eqref{e3.10}, from \cite{GH}, holds,  let us denote, 
$$
\mathcal{X}=L^r(0,T; L_\loc^r(\R^N))\cap C([0,T]; W_\loc^{-2,r}(\R^N)).
$$
First, we recall that $u\in \mathcal{X}$ if, for each $\phi\in C_c^\infty(\R^N)$, $\phi u\in L^r(0,T; L^r(\R^N))\cap C([0,T];W^{-2,r}(\R^N))$.  Also, convergence of a sequence $u_n\to u$ in 
$\mathcal{X}$ means that for all $\phi\in C_c^\infty(\R^N)$, $\phi u_n\to \phi u$ in  
$L^r(0,T; L^r(\R^N))\cap C([0,T];W^{-2,r}(\R^N))$. Observe that we can endow $\mathcal{X}$ with a metric with respect to which it becomes a separable metric  space. Indeed, for $\nu\in\N$, let 
$\mathcal{O}_\nu$ be the open ball  of radius $\nu$ around the origin in $\R^N$, and let $\phi_\nu\in C_c^\infty(\R^N)$, $0\le \phi_\nu\le 1$, with $\phi_\nu\equiv1$ on $\mathcal{O}_\nu$ and $\phi_\nu\equiv 0$, outside $\mathcal{O}_{\nu+1}$. 
 For $u\in\mathcal{X}$, let $\rho_\nu(u)$ be the norm of $\phi_\nu u$ in $L^r(0,T; L^r(\R^N))\cap C([0,T];W^{-2,r}(\R^N))$. We can then define the following metric in $\mathcal{X}$,
$$
\d(u,v)=\sum_{\nu=1}^\infty 2^{-\nu} \frac{\rho_\nu(u-v)}{1+\rho_\nu(u-v)}, \quad u,v \in\mathcal{X}.
$$

 Concerning the tightness of the laws $\mu_{u^\ve}$, $\ve\in(0,1)$,  associated to the solutions of \eqref{e4.1}-\eqref{e4.2}, $0<\ve<1$, we have the following result (see, e.g., the proof of proposition~5.3 in \cite{FLMNZ}). 
   
\begin{proposition}\label{P:3.2''} Let  $\mu_{u^\ve}$ be the law defined in $\mathcal{X}$ associated with $u^\ve$. The set 
$\{\mu_{u^\ve}\,:\, \ve\in(0,1)\}$ is tight and, therefore, relatively weakly compact in $\mathcal{X}$.

\end{proposition}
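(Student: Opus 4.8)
The plan is to verify tightness of $\{\mu_{u^\ve}\}_{\ve\in(0,1)}$ on $\mathcal{X}=L^r(0,T;L_\loc^r(\R^N))\cap C([0,T];W_\loc^{-2,r}(\R^N))$ by exhibiting, for each $\kappa>0$, a compact subset $K_\kappa\subset\mathcal{X}$ with $\mu_{u^\ve}(K_\kappa)\ge 1-\kappa$ uniformly in $\ve$, and then invoking Prokhorov's theorem. Since $\mathcal{X}$ carries the metric $\d$ built from the localized seminorms $\rho_\nu$, it suffices to produce, for each $\nu\in\N$, a set in $L^r(0,T;L^r(\R^N))\cap C([0,T];W^{-2,r}(\R^N))$ that is compact in that localized topology and captures $\phi_\nu u^\ve$ with probability at least $1-\kappa 2^{-\nu}$; intersecting over $\nu$ (after a diagonal argument) yields the desired $K_\kappa$.

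The two ingredients are already in place. First, the interior regularity estimate \eqref{e3.10'} (equivalently \eqref{e3.10}) gives, for each bounded smooth $\mathcal{O}=\mathcal{O}_{\nu+1}$, a uniform bound
\[
\bbE\|u^\ve\|_{L^r([0,T];W^{s,r}(\mathcal{O}_{\nu+1}))}\le C_{\mathcal{O}_{\nu+1}},
\]
with $s>0$ and $C_{\mathcal{O}_{\nu+1}}$ independent of $\ve$. Second, Proposition~\ref{P:3.2'} (via \eqref{e3.100}) gives a uniform bound
\[
\bbE\|u^\ve\|_{C^\l([0,T];W^{-2,r}(\mathcal{O}_{\nu+1}))}\le C_{\mathcal{O}_{\nu+1}}
\]
for some $\l\in(0,1/2)$. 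By Markov's inequality, outside an event of probability $\le\kappa 2^{-\nu-1}$ the quantity $\|u^\ve\|_{L^r([0,T];W^{s,r}(\mathcal{O}_{\nu+1}))}+\|u^\ve\|_{C^\l([0,T];W^{-2,r}(\mathcal{O}_{\nu+1}))}$ is bounded by a deterministic constant $M_{\nu,\kappa}$ independent of $\ve$. The set of functions $u$ with $\phi_\nu u$ satisfying this bound is relatively compact in $L^r([0,T];L^r(\R^N))\cap C([0,T];W^{-2,r}(\R^N))$: the Aubin--Lions--Simon lemma handles the $L^r$-in-time part, using that $W^{s,r}(\mathcal{O}_{\nu+1})\hookrightarrow\hookrightarrow L^r(\mathcal{O}_{\nu+1})$ compactly together with the $C^\l$-in-time Hölder control (which bounds an integrated time-modulus of continuity into $W^{-2,r}$), while the Arzelà--Ascoli theorem handles the $C([0,T];W^{-2,r})$ part, since $C^\l([0,T];W^{-2,r}(\mathcal{O}_{\nu+1}))\hookrightarrow\hookrightarrow C([0,T];W^{-2,r}(\mathcal{O}_{\nu+1}))$ (equicontinuity in time from the Hölder bound, pointwise relative compactness in $W^{-2,r}$ from, e.g., the compact embedding $L^r\hookrightarrow\hookrightarrow W^{-2,r}$ on the bounded set $\mathcal{O}_{\nu+1}$ combined with the uniform $L^r$-bound). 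Taking the closure gives a compact set $K_{\nu,\kappa}$; setting $K_\kappa:=\bigcap_{\nu\ge1}\{u:\phi_\nu u\in K_{\nu,\kappa}\}$ produces a closed, hence compact (by the metric structure of $\mathcal{X}$), set with $\mu_{u^\ve}(K_\kappa)\ge1-\sum_\nu\kappa2^{-\nu-1}\cdot 2 \ge 1-\kappa$ uniformly in $\ve$.

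The main obstacle is bookkeeping rather than conceptual: one must be careful that the compactness criterion is applied in the localized spaces (on each $\mathcal{O}_{\nu+1}$) and then reassembled via the metric $\d$, and that the time-regularity exponent $\l$ and the spatial-regularity exponent $s$ produced by the two a priori estimates are genuinely $\ve$-independent — this is exactly where the uniform-in-$\ve$ non-degeneracy of $\LL^\ve$ (noted after \eqref{e3.nondeg1}) is used, so that the averaging-lemma constant in \eqref{e3.10} does not degenerate as $\ve\to0$. Granting that, relative weak compactness of $\{\mu_{u^\ve}\}$ follows from Prokhorov's theorem, completing the proof.
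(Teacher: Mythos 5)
Your overall strategy coincides with the paper's: use the $\ve$-uniform bounds \eqref{e3.10'} and \eqref{e3.100} together with Markov's inequality to confine the localizations $\phi_\nu u^\ve$ to norm-bounded sets with summable exceptional probabilities, prove compactness of those sets in the localized topology, reassemble through the metric of $\mathcal{X}$, and conclude by Prokhorov. However, one step as written does not hold up. In the Arzel\`a--Ascoli part you justify pointwise-in-time relative compactness of $\{\phi_\nu u(t)\}$ in $W^{-2,r}(\mathcal{O}_{\nu+1})$ by ``the compact embedding $L^r\hookrightarrow\hookrightarrow W^{-2,r}$ combined with the uniform $L^r$-bound'', but your set $K_{\nu,\kappa}$ contains no uniform pointwise-in-time spatial bound: the estimate \eqref{e3.10'} controls only the $L^r$-in-time integral of the $W^{s,r}$-norm, so at a fixed $t$ the family is merely bounded in $W^{-2,r}$, which is not relatively compact there. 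This is exactly why the paper's set $K_R$ also includes the bound $\|u\|_{L^\infty(0,T;L_{\rho_*}^2(\R^N))}\le R$ coming from Proposition~\ref{P:3.1} (estimate \eqref{eP31}): it provides an $\ve$-uniform bound at (almost) every time in a space compactly embedded, after localization, into $W^{-2,r}$, from which strong convergence at a dense set of times and then, via the $C^\l$ bound, convergence in $C([0,T];W^{-2,r}(\R^N))$ follow.

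Your claim can be repaired without \eqref{eP31} by a Chebyshev-in-time interpolation: for each member $f$ of the family and each $\d>0$ there is a time $\sigma$ with $|\sigma-t|\le\d$ at which the $W^{s,r}$-norm is at most $CM\d^{-1/r}$, and then $\|f(t)-f(\sigma)\|_{W^{-2,r}}\le M\d^{\l}$, so $\{f(t)\}$ lies within $M\d^{\l}$ of a fixed totally bounded subset of $W^{-2,r}$ and is therefore totally bounded; but this argument (or the inclusion of the $L^\infty_t L_{\rho_*}^2$ bound, as in the paper) must be supplied, since the reason you give is not available. Two smaller points: ``$K_\kappa$ is closed, hence compact (by the metric structure of $\mathcal{X}$)'' is not a valid inference by itself; compactness requires the diagonal extraction over $\nu$ (each localization confined to the compact $K_{\nu,\kappa}$) and the patching of the localized limits into one element of $\mathcal{X}$, which is the argument the paper actually carries out for $K_R$. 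On the other hand, your use of Aubin--Lions--Simon for the $L^r_{t,x}$ component is a legitimate and somewhat more standard alternative to the paper's interpolation via $(-\Delta)^{-1}$ between the already-established $W^{-2,r}$ convergence and the $W^{s,r}$ bound, and your per-$\nu$ probability budget $\kappa 2^{-\nu-1}$ is equivalent bookkeeping to the paper's weights $2^\nu(C_{\mathcal{O}_{\nu+1}}+1)R$ in a single one-parameter family $K_R$.
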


\begin{proof} Let us define
\begin{multline*}
K_R=\{ u\in\mathcal{X}\,:\, 
 \|u\|_{L^\infty(0,T; L_{\rho_*}^2(\R^N))}\le R,\\ 
 \|\phi_\nu u\|_{C^\l([0,T];W^{-2,r}(\R^N))}\le 2^\nu(C_{\mathcal{O}_{\nu+1}} +1)R, 
 \\  \|\phi_\nu u\|_{L^r(0,T;W^{s,r}(\R^N))} \leq 2^\nu (C_{\mathcal{O}_{\nu+1}} + 1) R,
 \quad \forall \nu \geq 1\},
 \end{multline*}
where $C_{\mathcal{O}_{\nu+1}}$ is greater than or equal to the constant $C_{\mathcal{O}}$ in
\eqref{e3.10'} and \eqref{e3.100} for $\mathcal{O}=\mathcal{O}_{\nu+1}$, and $\phi_\nu$ is as above. We claim that $K_R$ is a relatively compact subset of $\mathcal{X}$. Indeed, if $\psi^k$ is a sequence in $K_R$, then $\|\psi^k\|_{L^\infty(0,T; L_{\rho_*}^2(\R^N))}\le R$, and
for all $\nu\in\N$, we have that $\phi_\nu \psi^k$ is bounded in 
$C^\l([0,T];W^{-2,r}(\mathcal{O}_{\nu+1}))\cap L^r(0,T;W^{s,r}(\mathcal{O}_{\nu+1}))$. 
Since $\|\psi^k\|_{L^\infty(0,T; L_{\rho_*}^2(\R^N))}\le R$, we can extract a subsequence, still denoted $\psi^k$,
and a $\psi\in L^\infty(0,T; L_{\rho_*}^2(\R^N))$ such that $\psi^k\wto \psi$ in the weak*-weak topology of $L^\infty(0,T; L_{\rho_*}^2(\R^N))$.
For all $\nu\in\N$, for a.e.\ $t\in[0,T]$, we have that $\|\phi_\nu \psi^k(t)\|_{L^2(\R^N)}\le C_{\phi_\nu}R$, for all $k\in\N$. In particular, we can find a dense set in $[0,T]$ and a subsequence, still denoted $\psi^k$, such that $\phi_\nu\psi_k(t)\to \phi_\nu\psi(t)$ in $W^{-1,2}(\R^N)$ strongly , for a dense set of $t\in[0,T]$, for all 
$\nu\in\N$, and so, also in  $W^{-2,r}(\R^N)$.  Since $\phi_\nu\psi^k$ is bounded in $C^\l([0,T];W^{-2,r}(\mathcal{O}_{\nu+1}))$, we deduce that  $\phi_\nu\psi^k\to\phi_\nu \psi$ strongly in $C([0,T];W^{-2,r}(\R^N))$, for all $\nu\in\N$.  
 On the other hand,  by interpolation we have, for all $\varphi\in C_c^\infty(\mathcal{O}_{\nu+1})$,  
$$
\|\varphi\|_{L^r(0,T;W^{2,r}(\cO_{\nu+1}))}\le \|\varphi\|_{L^r(0,T;L^r(\cO_{\nu+1}))}^{s/(2+s)}\|\varphi\|_{L^r(0,T;W^{2+s,r}(\cO_{\nu+1}))}^{2/(2+s)}.
$$
Then, by density, taking $\varphi=(-\Delta)^{-1}(\phi_\nu\psi_k)$, where by $-\Delta$ we mean the minus Laplacian operator with 0 Dirichlet condition on $\po\cO_{\nu+1}$, we conclude that $\phi_\nu\psi_k$ strongly converges in $L^r(0,T;L^r(\cO_{\nu+1}))$, using that $(-\Delta)^{-1}$ isomorphically takes  $L^r(0,T;L^r(\cO_{\nu+1}))$ onto
$L^r(0,T; W^{2,r}\cap W_0^{1,r}(\cO_{\nu+1}))$.

 In this way, by a standard diagonal argument, we obtain a subsequence of $\psi^k$, still denoted $\psi^k$, such that $\phi_\nu\psi^k$ converges in
$C([0,T];W^{-2,r}(\R^N))\cap L^r([0,T];L^r(\R^N))$, for all $\nu\in\N$, which implies the compactness of $K_R$ in $\mathcal{X}$. 

As for the tightness of $\mu_{u^\ve}$, we have
\begin{multline*}
\mu_{u^\ve}(\mathcal{X}\setminus K_R)\le
\bbP\Big(\|u^\ve\|_{L^\infty(0,T;L_{\rho_*}^2(\R^N))}>R\Big)\\
+\sum_{\nu=1}^\infty \bbP\Big(\|\phi_\nu u^\ve\|_{C^\l([0,T];W^{-2,r}(\R^N))}>2^\nu(C_{\mathcal{O}_{\nu+1}} +1)R\Big)\\
+\sum_{\nu=1}^\infty \bbP\Big(\|\phi_\nu u^\ve\|_{L^r(0,T;W^{s,r}(\R^N))}>2^\nu(C_{\mathcal{O}_{\nu+1}} +1)R\Big)\\
\le \frac1{R^2}\bbE\sup_{[0,T]}\|u^\ve(t)\|_{L_{\rho_*}^2(\R^N)}^2\\
+\sum_{\nu=1}^\infty \frac1{2^\nu(C_{\mathcal{O}_{\nu+1}} +1)R}\bbE\left(\|\phi_\nu u^\ve\|_{C^\l([0,T];W^{-2,r}(\R^N))}+\|\phi_\nu u^\ve\|_{L^r(0,T;W^{s,r}(\R^N))}\right)\\
\le \frac{C}{R^2}+\frac2{R},
\end{multline*}
by using \eqref{eP31}, in the limit as $\mu\to0$,  \eqref{e3.10'} and \eqref{e3.100}, which implies the tightness of $\mu_{u^\ve}$.  

\end{proof}

With Proposition~\ref{P:3.2''} at hand, we apply Prokhorov's theorem to obtain a subsequence $u^n$ such that $\mu_{u^n}$ weakly converges in $\mathcal{X}$. We can then apply Skorokhod's theorem and obtain a further subsequence still denoted $u^n$, a new probability space $(\tilde \Om,\tilde\bbP)$, and a subsequence $\tilde u^n$, with $\mu_{\tilde u^n}=\mu_{u^n}$, such that $\tilde u^n:\tilde \Om\to\mathcal{X}$ converges a.s.\ to $\tilde u:\tilde \Om\to \mathcal{X}$.  

\begin{proposition}\label{P:3.2'''} There exists a probability space $(\tilde \Om,\tilde \F,\tilde \bbP)$ with a sequence of $\mathcal{X}$-valued random variables $\tilde u^n$, $n\in\N$, and
$\tilde u$ such that:
\begin{enumerate}
\item[(i)] the laws of $\tilde u^n$ and $\tilde u$ under $\tilde \bbP$ coincide with $\mu^n$ and $\mu$, respectively,
\item[(ii)] $\tilde u^n$ converges $\tilde \bbP$-almost surely to $\tilde u$ in the topology of $\mathcal{X}$.

\end{enumerate}
\end{proposition}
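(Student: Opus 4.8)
The plan is to deduce the statement directly from Prokhorov's theorem together with the Skorokhod representation theorem, as anticipated in the paragraph preceding the statement. First I would record that $\mathcal{X}$, endowed with the metric $\d$ introduced above, is a complete separable metric space (a Polish space). Indeed, for each $\nu\in\N$ the space $Y_\nu:=L^r(0,T;L^r(\mathcal{O}_{\nu+1}))\cap C([0,T];W^{-2,r}(\mathcal{O}_{\nu+1}))$, with the norm $\rho_\nu$, is a separable Banach space, and the map $u\mapsto(\phi_\nu u)_{\nu\ge1}$ identifies $\mathcal{X}$ with a closed subspace of the countable product $\prod_{\nu\ge1}Y_\nu$ equipped with the metric $\d(u,v)=\sum_{\nu\ge1}2^{-\nu}\rho_\nu(u-v)/(1+\rho_\nu(u-v))$; a closed subspace of a countable product of Polish spaces is Polish. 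By Proposition~\ref{P:3.2''} the family $\{\mu_{u^\ve}:\ve\in(0,1)\}$ is tight on $\mathcal{X}$, hence relatively weakly compact; so there are a sequence $\ve_n\downarrow0$ and a Borel probability measure $\mu$ on $\mathcal{X}$ such that $\mu^n:=\mu_{u^{\ve_n}}$ converges weakly to $\mu$. We relabel $u^{\ve_n}$ as $u^n$.

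Next I would apply the Skorokhod representation theorem on the Polish space $\mathcal{X}$ to the weakly convergent sequence $(\mu^n)$ and its limit $\mu$: there exist a probability space $(\tilde\Om,\tilde\F,\tilde\bbP)$ — one may take $\tilde\Om=[0,1]$ with its Borel $\sigma$-algebra and Lebesgue measure — together with $\mathcal{X}$-valued random variables $\tilde u^n$ ($n\in\N$) and $\tilde u$ such that the law of $\tilde u^n$ under $\tilde\bbP$ equals $\mu^n$, the law of $\tilde u$ under $\tilde\bbP$ equals $\mu$, and $\tilde u^n\to\tilde u$ $\tilde\bbP$-almost surely in the topology of $\mathcal{X}$. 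This is exactly (i)--(ii). I would moreover note, for later use, that since $\tilde u^n$ and $u^n$ have the same law, every $\tilde\bbP$-almost-sure bound on $u^n$ of the form $\{\,\Psi(u)\le c\,\}$ with $\Psi$ a Borel functional on $\mathcal{X}$ — in particular all the uniform estimates \eqref{eP31}, \eqref{e3.10'}, \eqref{e3.100} entering the construction of the sets $K_R$ in the proof of Proposition~\ref{P:3.2''} — transfers to $\tilde u^n$ with the same constants.

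There is essentially no serious obstacle here: the proof is a citation of two classical theorems, and the only genuine verification — that $(\mathcal{X},\d)$ is Polish — was already set up in the definition of the metric $\d$. The single point a reader should keep in mind is that, for the subsequent identification of $\tilde u$ as a martingale entropy solution of \eqref{e1.1}--\eqref{e1.2}, one must also transport the driving cylindrical Wiener process to $(\tilde\Om,\tilde\F,\tilde\bbP)$; this is obtained by running the very same Prokhorov--Skorokhod argument for the laws of the pairs (approximate solution, Wiener path), on the product of $\mathcal{X}$ with a suitable space of continuous paths carrying $W$, but this enlargement is irrelevant to the statement proved here and is postponed.
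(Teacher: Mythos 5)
Your proposal is correct and follows essentially the same route as the paper, which obtains the statement exactly by combining the tightness of Proposition~\ref{P:3.2''} with Prokhorov's theorem and then invoking Skorokhod's representation theorem on the separable metric space $(\mathcal{X},\d)$. The only addition is your explicit verification that $\mathcal{X}$ is Polish (the paper merely notes separability, which already suffices for the Billingsley form of Skorokhod's theorem), and your closing remark about transporting the Wiener process correctly anticipates the construction of $(\overline{\Om},\overline{\F},\overline{\bbP},\overline{W})$ carried out after the proposition.
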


We then define yet another probability space $\overline{\Om}=\Om\X\tilde \Om$ with the product probability measure $\overline{\bbP}=\bbP\X\tilde{\bbP}$, the $\s$-algebra $\overline{\F}$ as the product  $\s$-algebra generated by $\tilde \F\X \F$,    and, from the Wiener process $W(t)$ in $\Om$, we define the Wiener process $\overline{W}(t)$ in $\overline{\Om}$ trivially by $\overline{W}(t)(\om,\tilde \om)=W(t)(\om)$, for $(\om,\tilde \om)\in\overline{\Om}=\Om\X\tilde\Om$; clearly, $\overline{W}$ has the same law as $W$.  Defining $\bar u^n:\overline{\Om}\to\mathcal{X}$ by $\bar u^n(\om,\tilde \om)=\tilde u^n(\tilde \om)$, we have that $\mu_{\bar u^n}=\mu_{\tilde u^n}=\mu_{u^n}$. Also, $\bar u^n$ converges a.s.\ in $\overline{\Om}$ to the random variable $\bar u:\overline{\Om}\to \mathcal{X}$ defined by $\bar u(\om,\tilde \om)=\tilde u(\tilde\om)$. We define a  filtration $\overline{\F}_t$ for $(\overline{\Om}, \overline{\F},\overline{\bbP})$ in the following way ({\em cf.} \cite{Ha}).
For each $t\in[0,T]$, the restriction map $\rho_t: C([0,T]; W_\loc^{-2,r} (\R^N)\X C([0,T];\mathfrak{U}_0)\to C([0,t]; W_\loc^{-2,r} (\R^N)\X C([0,t];\mathfrak{U}_0)$, $(v,W)\mapsto (v,W)|[0,t]$, is a continuous map. Here, $\mathfrak{U}_0$ is the Hilbert space where the cylindrical Wiener process $W(t)$ is well defined. So, we define as $\overline{\F}_t=\s(\rho_t\bar u,\rho_t)$, the $\s$-algebra of subsets of $\Om$ generated 
by the function $(\rho_t\bar u, \rho_t\overline{W}):\Om\to C([0,t];W_\loc^{-2,r}(\R^N))\X C([0,t];\mathfrak{U}_0)$, and we denote also by $\overline{\F}_t$ the corresponding augmented filtration, i.e., the smallest complete right-continuous filtration containing $\overline{\F_t}$.

\begin{definition}\label{D:3.1} We say that $\tilde u$ is a $\BAP$-entropy martingale solution of \eqref{e1.1}-\eqref{e1.2} if, for some probability space equipped with a filtration $(\bar \Om, \overline{\F}, (\overline{\F}_t), \overline{\bbP})$ and some cylindrical Wiener process $\overline{W}(t)=\sum_{i=1}^\infty \bar \b_k e_k$,
with respect to the filtration $(\overline{\F}_t)$,
with $\{e_k\}_{k\ge1}$  a complete orthonormal system in a Hilbert space $H$  and $\tilde \b_k$, $k\in\N$, independent Brownian motions in $(\bar \Om, \overline{\F}, (\overline{\F}_t), \overline{\bbP})$  
if $\bar w=\bar u-\bar J$ satisfies Definition~\ref{D:2.1} with $u$, $J$, $W$, replaced by $\bar u$, 
$\bar J$,  $\overline{W}$.
 \end{definition}

We are now going to prove that  following important fact.

\begin{theorem}\label{T:4.new}  The limit $\bar u$ with $(\bar \Om, \overline{\F}, (\overline{\F}_t), \overline{\bbP},\overline{W})$ is  a $\BAP$-entropy martingale solution in the sense of Definition~\ref{D:3.1}, that is, that it satisfies all items in Definition~\ref{D:2.1}, with the replacements referred to in Definition~\ref{D:3.1}.
\end{theorem}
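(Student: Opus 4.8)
The plan is to follow the by now classical scheme of \cite{DHV} (see also \cite{FLMNZ,Ha}): transfer the approximating equation \eqref{e4.1} from the original stochastic basis to the Skorokhod space $(\bOm,\overline{\F},\overline{\bbP})$ by equality of laws, then pass to the limit $n\to\infty$ using the almost sure convergence $\bar u^n\to\bar u$ in $\mathcal{X}$ from Proposition~\ref{P:3.2'''} together with the $\ve$-uniform estimates of Propositions~\ref{P:3.1}, \ref{P:3.new}, \ref{P:3.2'} and \eqref{e3.10}--\eqref{e3.10'}, and verify items (i)--(vi) of Definition~\ref{D:2.1} for $\bar w=\bar u-\bar J$ (adaptedness to $(\overline{\F}_t)$ being built into the definition of $\overline{\F}_t$ given above). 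The transfer is done through the martingale characterization of \eqref{e4.1}: since $\mu_{\bar u^n}=\mu_{u^n}$ and $u^n$ is a smooth solution of \eqref{e4.1} driven by $W$, the $\mathcal{D}'(\R^N)$-valued process $M^n(t):=\bar u^n(t)-\bar u^n(0)+\int_0^t(\div\bff(\bar u^n)-D^2:\Abf^{\ve_n}(\bar u^n))\,ds$ is, tested against any $\varphi\in C_c^\infty(\R^N)$, a continuous square-integrable $(\overline{\F}_t)$-martingale whose quadratic variation is $\int_0^t\sum_k\langle g_{k\ve_n},\varphi\rangle^2\,ds$ and whose cross-variation with $\langle\overline{W},e_k\rangle$ is $\int_0^t\langle g_{k\ve_n},\varphi\rangle\,ds$; these structural identities depend only on the law and hence persist on $\bOm$.

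Next comes the passage to the limit in the non-stochastic terms. Since $\bff,\Abf$ are Lipschitz and $\bar u^n\to\bar u$ in $L^r(0,T;L_\loc^r(\R^N))$ (hence, along a further subsequence, a.e.\ in $(0,T)\times\R^N$), we get $\bff(\bar u^n)\to\bff(\bar u)$ and $\Abf(\bar u^n)\to\Abf(\bar u)$ in $L_\loc^r$, so $\div\bff(\bar u^n)\to\div\bff(\bar u)$ and $D^2:\Abf(\bar u^n)\to D^2:\Abf(\bar u)$ in $\mathcal{D}'((0,T)\times\R^N)$; the spurious term $\ve_n I:D^2\bar u^n$ tends to $0$ in $\mathcal{D}'$ because $\bar u^n$ is bounded in $L^\infty(0,T;L_{\rho_*}^2(\R^N))$ by Proposition~\ref{P:3.1}. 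Weak time-continuity of $\bar w$ (item (i)) follows from $\bar u^n\to\bar u$ in $C([0,T];W_\loc^{-2,r})$ combined with the uniform $L^\infty(0,T;L_{\rho_*}^2)$ bound; item (ii) and the fact that $\bar w(t)\in\B^1(\R^N)$ follow from the uniform $\B^2$-bound \eqref{e3.12} of Proposition~\ref{P:3.new}, by extracting from $\bar u^n$ a weak limit in $L^2(\bOm;L^\infty([0,T];\B^2))$ and identifying it with $\bar u$; the initial condition (vi) follows from $u_{0\ve_n}\to u_0$ in $\AP(\R^N)$ (hence in $L_\loc^1$) together with the weak continuity.

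The \emph{main obstacle} is the limit in the entropy inequality (item (v)), to be carried out simultaneously with the weak regularity (iii) and the chain rule (iv). For the smooth, non-degenerate-parabolic approximations $u^n$ the It\^o--entropy identity (equivalently the kinetic formulation \eqref{e3.9}) holds, and the chain rule gives $\Upsilon_k^n:=\sum_{i=1}^N(\po_{x_i}\zeta_{ik}(w^n)-\zeta_{ik,x_i}(w^n))=\sum_{i=1}^N\sigma_{ik}(w^n,\cdot)\po_{x_i}w^n$; by the energy estimate these fields are bounded in $L_\loc^2((0,T)\times\R^N)$, so along a subsequence $\Upsilon_k^n\rightharpoonup\Upsilon_k$ weakly in $L_\loc^2$. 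Using $\zeta_{ik}(w^n)\to\zeta_{ik}(w)$ in $L_\loc^r$ (Lipschitz $\sigma$) one identifies $\Upsilon_k=\sum_i(\po_{x_i}\zeta_{ik}(w)-\zeta_{ik,x_i}(w))$, which is (iii); passing to the limit in the analogous identity for $\zeta_{ik}^\psi$ and using the strong $L_\loc^r$ convergence of $\psi(w^n)$ yields (iv). For the entropy inequality itself, the left-hand side of \eqref{e2.entropy} passes by the $L_\loc^r$ and $\mathcal{D}'$ convergences above (with $q,r$ Lipschitz), the martingale term is handled via the noise identification below, and for the dissipation one invokes the weak lower semicontinuity lemma: if $0\le\psi_n\to\psi$ a.e.\ and boundedly and $v_n\rightharpoonup v$ in $L_\loc^2$, then $\psi_n v_n\rightharpoonup\psi v$ and $\liminf_n\int\psi_n v_n^2\,dx\,dt\ge\int\psi v^2\,dx\,dt$; applied with $v_n=\Upsilon_k^n$ and $\psi_n=\eta''(w^n)\ge0$, this shows that $-\eta''(w^n)\sum_k(\Upsilon_k^n)^2$ converges in $\mathcal{D}'$ to $-\eta''(w)\sum_k\Upsilon_k^2$ minus a nonnegative defect measure, and since the extra term $\eta''(w^n)\ve_n|\nabla u^n|^2\ge0$ also contributes with the correct sign, the limiting right-hand side is $\le-\eta''(w)\sum_k\Upsilon_k^2$, which is exactly \eqref{e2.entropy}. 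This step is delicate precisely because it uses the degenerate-parabolic structure (the chain rule and the precise form of the dissipation) and must be reconciled with the stochastic term.

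Finally, the noise term: for each $t$ one has $M^n(t)\to M(t):=\bar u(t)-\bar u(0)+\int_0^t(\div\bff(\bar u)-D^2:\Abf(\bar u))\,ds$ in $\mathcal{D}'$, and the uniform moment bound $\bbE\sup_{[0,T]}\|u^n(t)\|_{L_{\rho_*}^2}^p<\infty$ of Proposition~\ref{P:3.1} (together with \eqref{e3.10'} controlling the nonlinear terms) supplies the uniform integrability needed to pass the martingale property, its quadratic variation $\int_0^t\sum_k\langle g_{k\ve_n},\varphi\rangle^2\,ds\to\int_0^t\sum_k\langle g_k,\varphi\rangle^2\,ds$ (using $g_{k\ve_n}\to g_k$ and \eqref{e1.3-0}) and its cross-variation with $\overline{W}$ to the limit. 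Hence $M$ is a continuous $(\overline{\F}_t)$-martingale with quadratic variation $\int_0^t\Phi\Phi^*\,ds$ and cross-variation with $\langle\overline{W},e_k\rangle$ equal to $\int_0^t\langle g_k,\cdot\rangle\,ds$; the martingale representation theorem for cylindrical Wiener processes then gives $M(t)=\int_0^t\Phi\,d\overline{W}$, and inserting this into the identities above establishes all items of Definition~\ref{D:2.1} for $\bar w$ with the replacements of Definition~\ref{D:3.1}.
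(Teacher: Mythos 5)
Your strategy is workable, and its analytic core coincides with the paper's: transfer to the Skorokhod space by equality of (joint) laws, uniform $L^2_\loc$ bounds on the fields $\Upsilon_k^n$, identification of their weak limits through the Lipschitz continuity of $\sigma$, strong-times-weak convergence for the chain rule, and weak lower semicontinuity for the dissipation. Where you genuinely differ is the stochastic bookkeeping: you recast \eqref{e4.1} as a martingale problem for $M^n$ and invoke the martingale representation theorem to recover $\int_0^t\Phi\,d\overline W$. The paper never needs this, because Definition~\ref{D:2.1} is pathwise in $w=u-J$ (Kim's transformation): the entropy inequality \eqref{e2.entropy} contains no stochastic integral at all — your phrase ``the martingale term is handled via the noise identification below'' refers to a term that is not there — and it suffices to transfer the pathwise entropy inequalities for $w^\ve$ by testing against arbitrary continuous $\gamma(\bar u^\ve,\overline W)\in[0,1]$ and using equality of laws, exactly as the paper does. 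Your route buys an identification of the limit equation for $\bar u$ driven by $\overline W$ at the price of extra machinery (uniform integrability of the martingale increments, the representation theorem); it is not wrong, but it is not needed for the statement, and both routes equally rely on the joint law of $(u^\ve,W)$ being preserved on the new basis. Also, the uniform-in-$n$ $L^2_\loc$ bound on $\Upsilon_k^n$ should be justified rather than attributed to ``the energy estimate'': the paper obtains it from the entropy inequality with $\eta(w)=\tfrac12 w^2$ combined with the $p>2$ moment bound of Proposition~\ref{P:3.1}, which is also what gives the equi-integrability used in the chain-rule passage.

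The genuine gap is in items (vi) and, partly, (i). The initial condition \eqref{e2.D21'} requires \emph{strong} $L^1(C_R)$ attainment of $u_0$ as $t\to0+$, and this does not follow from ``$u_{0\ve_n}\to u_0$ in $\AP(\R^N)$ together with the weak continuity'': weak time-continuity at $t=0$ only yields $u(t)\wto u_0$, and none of your uniform estimates provides a modulus of strong $L^1_\loc$ continuity at $t=0$ uniform in $n$ (the $C^\l$ bounds are in negative Sobolev norms). The paper closes this by keeping the initial term in the limit entropy inequality, i.e.\ allowing test functions not vanishing at $t=0$ as in \eqref{e4.entropyiv'}, and then running the Otto-type argument with Kruzhkov entropies $\eta(w)=|w-c|$ and test functions concentrating near $t=0$; some argument of this kind is indispensable and is missing from your proposal. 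Similarly, item (i) asks for weak continuity in $L^1_\loc(\R^N)\cap\B^1(\R^N)$; continuity of $t\mapsto\Me(w(t)\varphi)$ against almost periodic $\varphi$ cannot be reached with compactly supported test functions alone (your $C([0,T];W^{-2,r}_\loc)$ argument only covers the $L^1_\loc$-weak part), and the paper obtains it from the weak formulation tested against $R^{-N}g(x/R)\varphi(t,x)$ with $\varphi\in C^\infty([0,T];\AP^2(\R^N))$, leading to \eqref{e4.1new3}; you would need to add this step as well.
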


 \begin{proof} Indeed, we first recall that, from Proposition~\ref{P:3.new}, the bound on 
$$
\bbE\sup\limits_{0\le t\le T}\|u^{\mu,\ve}(t)\|_{\B^2(\R^N)}^2
$$ 
given by \eqref{e3.12} holds independently of $\mu,\ve$. Therefore, in particular, a.s., $\bar u\in L^\infty([0,T]; \B^1(\R^N))$.  

Another important point is that by Proposition~\ref{P:3.1}, in the limit as $\mu\to0$, we have that a.s.\ $u^\ve\in L^2([0,T];W_{\rho_*}^{1,2}(\R^N))$ and so this also holds for $\bar u^\ve$. On the other hand, for any continuous $\g:\mathcal{X}\X\mathcal{W}\to [0,1]$, with $\mathcal{W}=C([0.1];\mathfrak{U}_0)$, and any $\phi\in C_c^\infty(\R^N)$, we have
\begin{multline*}
\bbE\g(\bar u^\ve, \overline{W})\int_0^T\int_{\R^N}\po_{x_i}\bar u^\ve \phi\,dx\,dt=
-\bbE\g(\bar u^\ve, \overline{W})\int_0^T\int_{\R^N}\bar u^\ve\po_{x_i} \phi\,dx\,dt\\
=-\bbE\g( u^\ve,  W)\int_0^T\int_{\R^N} u^\ve\po_{x_i} \phi\,dx\,dt
=\bbE\g( u^\ve,  W)\int_0^T\int_{\R^N}\po_{x_i} u^\ve \phi\,dx\,dt,
\end{multline*}
$i=1,\cdots,N$, which shows that also the derivatives $\po_{x_i}\bar u^\ve$ have the same laws as the corresponding derivatives $\po_{x_i}\bar u^\ve$. Now, for each $\ve>0$, the solution $u^\ve$ of \eqref{e4.1}-\eqref{e4.2} satisfies the following inequality, with $w^\ve=u^\ve-J^\ve$, for all $0\le \phi\in C_c^\infty((0,T)\X\R^N)$,
\begin{multline}\label{e4.entropy}
\int\limits_{(0,T)\X\R^N} \left\{\eta(w^\ve)\po_t\phi+\sum_{i=1}^N q_i(w^\ve,t,x)\po_{x_i}\phi+\sum_{i,j=1}^N r_{ij}^\ve(w^\ve,t,x)\po_{x_ix_j}^2\phi \right.\\
 \left. -\sum_{i=1}^N\left(\eta'(w^\ve)b_{i,x_i}(w^\ve,t,x)-q_{i,x_i}(w^\ve,t,x)\right)\phi+\sum_{i,j=1}^N r_{ij,x_i}^\ve(w^\ve,t,x)\po_{x_i}\phi\right\}\,dx\,dt\\
 - \int\limits_{(0,T)\X\R^N}\eta''(w^\ve)\sum_{k=1}^K\left(\sum_{i=1}^N\left(\po_{x_i}\z_{ik}^\ve(w^\ve,t,x)-\z_{ik,x_i}^\ve(w^\ve,t,x)\right)\right)^2\phi\, dx\, dt\ge 0,
 \end{multline}
where $r'(w,t,x)=\eta'(w)\abf^\ve(w+J)$, with $\abf^\ve(u)=\abf(u)+\ve I_{N\X N}$ and $I_{N\X N}$ is the $N\X N$ identity matrix, and 
$\z_{ik}^\ve(w,t,x)=\int_0^w\s_{ik}^\ve(v,t,x)\,dv$, with $\s_{ik}^\ve(v,t,x)=\bar\s_{ik}^\ve(w+J^\ve)$, and $\bar \s^\ve(u)$ is the $N\X K$ matrix such that  $\abf^\ve(u)=\bar\s^\ve(u)\bar\s^\ve(u)^\top$. Therefore, for $\bar w^\ve=\bar u^\ve-\bar J^\ve$ and 
$$
\bar J^\ve(x,t):=\sum_{k=1}^{[1/\ve]} g_k(x) \bar \b_k(t),
$$
we have,  for any continuous $\g:\mathcal{X}\X\mathcal{W}\to [0,1]$, $0\le \phi\in C_c^\infty((0,T)\X\R^N)$,
\begin{multline}\label{e4.entropy'}
\bbE\gamma(\bar u^\ve, \overline{W})\left[\int\limits_{(0,T)\X\R^N} \left\{\eta(\bar w^\ve)\po_t\phi+\sum_{i=1}^N q_i(\bar w^\ve,t,x)\po_{x_i}\phi+\sum_{i,j=1}^N r_{ij}^\ve(\bar w^\ve,t,x)\po_{x_ix_j}^2\phi \right.\right.\\
 \left. -\sum_{i=1}^N\left(\eta'(\bar w^\ve)b_{i,x_i}(\bar w^\ve,t,x)-q_{i,x_i}(\bar w^\ve,t,x)\right)\phi+\sum_{i,j=1}^N r_{ij,x_i}^\ve(\bar w^\ve,t,x)\po_{x_i}\phi\right\}\,dx\,dt\\
 -\left. \int\limits_{(0,T)\X\R^N}\eta''(\bar w^\ve)\sum_{k=1}^K\left(\sum_{i=1}^N\left(\po_{x_i}\z_{ik}^\ve(\bar w^\ve,t,x)-\z_{ik,x_i}^\ve(\bar w^\ve,t,x)\right)\right)^2\phi\, dx\, dt\right]\\
=\bbE\gamma( u^\ve, W)\left[\int\limits_{(0,T)\X\R^N} \left\{\eta( w^\ve)\po_t\phi+\sum_{i=1}^N q_i( w^\ve,t,x)\po_{x_i}\phi+\sum_{i,j=1}^N r_{ij}^\ve( w^\ve,t,x)\po_{x_ix_j}^2\phi \right.\right.\\
 \left. -\sum_{i=1}^N\left(\eta'( w^\ve)b_{i,x_i}( w^\ve,t,x)-q_{i,x_i}( w^\ve,t,x)\right)\phi+\sum_{i,j=1}^N r_{ij,x_i}^\ve( w^\ve,t,x)\po_{x_i}\phi\right\}\,dx\,dt\\
\left. - \int\limits_{(0,T)\X\R^N}\eta''( w^\ve)\sum_{k=1}^K\left(\sum_{i=1}^N\left(\po_{x_i}\z_{ik}^\ve( w^\ve,t,x)-\z_{ik,x_i}^\ve( w^\ve,t,x)\right)\right)^2\phi\, dx\, dt\right] \ge0.
 \end{multline}
Hence, 
\begin{multline}\label{e4.entropy''}
\bbE\gamma(\bar u^\ve, \overline{W})\left[\int\limits_{(0,T)\X\R^N} \left\{\eta(\bar w^\ve)\po_t\phi+\sum_{i=1}^N q_i(\bar w^\ve,t,x)\po_{x_i}\phi+\sum_{i,j=1}^N r_{ij}^\ve(\bar w^\ve,t,x)\po_{x_ix_j}^2\phi \right.\right.\\
 \left. -\sum_{i=1}^N\left(\eta'(\bar w^\ve)b_{i,x_i}(\bar w^\ve,t,x)-q_{i,x_i}(\bar w^\ve,t,x)\right)\phi+\sum_{i,j=1}^N r_{ij,x_i}^\ve(\bar w^\ve,t,x)\po_{x_i}\phi\right\}\,dx\,dt\\
\left. - \int\limits_{(0,T)\X\R^N}\eta''(\bar w^\ve)\sum_{k=1}^K\left(\sum_{i=1}^N\left(\po_{x_i}\z_{ik}^\ve(\bar w^\ve,t,x)-\z_{ik,x_i}^\ve(\bar w^\ve,t,x)\right)\right)^2\phi\, dx\, dt\right]\ge0,
 \end{multline}
for each $\g:\mathcal{X}\X\mathcal{W}\to [0,1]$,
$0\le \phi\in C_c^\infty((0,T)\X\R^N)$. Therefore, we have a.s.\ we have
\begin{multline}\label{e4.entropy'''}
\int\limits_{(0,T)\X\R^N} \left\{\eta(\bar w^\ve)\po_t\phi+\sum_{i=1}^N q_i(\bar w^\ve,t,x)\po_{x_i}\phi+\sum_{i,j=1}^N r_{ij}^\ve(\bar w^\ve,t,x)\po_{x_ix_j}^2\phi \right.\\
 \left.  \sum_{i=1}^N\left(\eta'(\bar w^\ve)b_{i,x_i}(\bar w^\ve,t,x)-q_{i,x_i}(\bar w^\ve,t,x)\right)\phi+\sum_{i,j=1}^N r_{ij,x_i}^\ve(\bar w^\ve,t,x)\po_{x_i}\phi\right\}\,dx\,dt\\
\ge  \int\limits_{(0,T)\X\R^N}\eta''(\bar w^\ve)\sum_{k=1}^K\left(\sum_{i=1}^N\left(\po_{x_i}\z_{ik}^\ve(\bar w^\ve,t,x)-\z_{ik,x_i}^\ve(\bar w^\ve,t,x)\right)\right)^2\phi\, dx\, dt.
\end{multline}
The left-hand side of \eqref{e4.entropy'''} ($L.H.S.$, for short)  is a.s.\ bounded in $L^r([0,T]\X\R^N)$, since $\bar u^\ve$ converges a.s.\ in $L_\loc^r([0,T]\X\R^N)$, with $1<r<2$. Then, for any fixed $\om\in\Om$ in a subset of total measure, $L.H.S.$ is equi-integrable and so converges weakly in $L^1((0,T)\X\R^N)$. But since $\bar u^\ve$ converges in $L_\loc^r((0,T)\X\R^N)$ to $\bar u$, we obtain that $L.H.S.$ converges as $\ve\to0$ to
\begin{multline*}
\int\limits_{(0,T)\X\R^N} \left\{\eta(\bar w)\po_t\phi+\sum_{i=1}^N q_i(\bar w,t,x)\po_{x_i}\phi+\sum_{i,j=1}^N r_{ij}(\bar w,t,x)\po_{x_ix_j}^2\phi \right.\\
 \left.  \sum_{i=1}^N\left(\eta'(\bar w)b_{i,x_i}(\bar w,t,x)-q_{i,x_i}(\bar w,t,x)\right)\phi+\sum_{i,j=1}^N r_{ij,x_i}(\bar w,t,x)\po_{x_i}\phi\right\}\,dx\,dt,
 \end{multline*}
   
As for the right-hand side of  \eqref{e4.entropy'''} ($R.H.S.$, for short), using  the fact that $\eta\in{\mathcal E}$,  we deduce that the sequence of vector functions with values in $\R^K$
$$
\left(\sum_{i=1}^N\left(\po_{x_i}\z_{ik}^\ve(\bar w^\ve,t,x)-\z_{ik,x_i}^\ve(\bar w^\ve,t,x)\right)\right)\eta''(\bar w^\ve)^{1/2}, \quad k=1,\cdots, K,
$$
is bounded in $L_\loc^2((0,T)\X\R^N)$,  and by the usual chain rule the above expression is equal to 
$$
\sum_{i=1}^N\left(\po_{x_i}\z_{ik}^{\ve,\eta''(\cdot)^{1/2}} (\bar w^\ve,t,x)-\z_{ik,x_i}^{\ve,\eta''(\cdot)^{1/2}}(\bar w^\ve,t,x)\right), \quad k=1,\cdots, K,
$$
but the latter converges in the sense of the distributions to
$$
\sum_{i=1}^N\left(\po_{x_i}\z_{ik}^{\eta''(\cdot)^{1/2}} (\bar w,t,x)-\z_{ik,x_i}^{\eta''(\cdot)^{1/2}}(\bar w,t,x)\right),
 \quad k=1,\cdots, K.
$$
Therefore, taking the $\liminf$ in \eqref{e4.entropy'''} and using the lower semicontinuity of the $L^2$-norm we arrive at 
\begin{multline}\label{e4.entropyiv}
\int\limits_{(0,T)\X\R^N} \left\{\eta(\bar w)\po_t\phi+\sum_{i=1}^N q_i(\bar w,t,x)\po_{x_i}\phi+\sum_{i,j=1}^N r_{ij}(\bar w,t,x)\po_{x_ix_j}^2\phi \right.\\
 \left.  \sum_{i=1}^N\left(\eta'(\bar w)b_{i,x_i}(\bar w,t,x)-q_{i,x_i}(\bar w,t,x)\right)\phi+\sum_{i,j=1}^N r_{ij,x_i}(\bar w,t,x)\po_{x_i}\phi\right\}\,dx\,dt\\
\ge  \int\limits_{(0,T)\X\R^N}\sum_{k=1}^K\left(\sum_{n=1}^N\left(\po_{x_i}\z_{ik}^{\eta''(\bar w)^{1/2}} (\bar w,t,x)-\z_{ik,x_i}^{\eta''(\bar w)^{1/2}}(\bar w,t,x)\right)\right)^2\phi\, dx\, dt.
\end{multline}  
When we will have proved the validity of the chain rule, we will be able to write the right-hand side as
\begin{equation}\label{e4.entropyv}
\int\limits_{(0,T)\X\R^N}\eta''(\bar w)\sum_{k=1}^K\left(\sum_{n=1}^N\left(\po_{x_i}\z_{ik} (\bar w,t,x)-\z_{ik,x_i}(\bar w,t,x)\right)\right)^2\phi\, dx\, dt.
\end{equation}
In order to prove the validity of the chain rule for $\bar u$ (i.e., $\bar w$), we observe first that by \eqref{eP31} in the limit as $\mu\to0$, applied to $u^\ve$, with $p>2$,  implies the equi-integrability in $\Om$ of 
$\sup_{0\le t\le T}\|u^\ve(t)\|_{L_{\rho_*}^2(\R^N)}$. So, we may use $\eta(w)=\frac12 w^2$ in \eqref{e4.entropy''} 
and get that the integrand inside the expectation sign of
\begin{multline*}
\bbE\gamma(\bar u^\ve, \overline{W})\left[\int\limits_{(0,T)\X\R^N} \left\{\eta(\bar w^\ve)\po_t\phi+\sum_{i=1}^N q_i(\bar w^\ve,t,x)\po_{x_i}\phi+\sum_{i,j=1}^N r_{ij}^\ve(\bar w^\ve,t,x)\po_{x_ix_j}^2\phi \right.\right.\\
 \left.\left. -\sum_{i=1}^N\left(\eta'(\bar w^\ve)b_{i,x_i}(\bar w^\ve,t,x)-q_{i,x_i}(\bar w^\ve,t,x)\right)\phi+\sum_{i,j=1}^N r_{ij,x_i}^\ve(\bar w^\ve,t,x)\po_{x_i}\phi\right\}\,dx\,dt\right]
 \end{multline*}
 is   equi-integrable in $\Om$ for all $\gamma:\mathcal{X}\X\mathcal{W}\to[0,1]$ and all $\phi\in C_c^\infty((0,T)\X\R^N)$. In particular, since the integrand is also bounded in $L^1(\Om)$, it  converges weakly in $L^1(\Om)$, for all $\gamma:\mathcal{X}\X\mathcal{W}\to[0,1]$ and all $\phi\in C_c^\infty((0,T)\X\R^N)$. As a consequence, this is also true for   the integrand inside the expectation sign of
 \begin{equation*}
\bbE\gamma(\bar u^\ve, \overline{W})\left[ \int\limits_{(0,T)\X\R^N} \sum_{k=1}^K\left(\sum_{i=1}^N\left(\po_{x_i}\z_{ik}^\ve(\bar w^\ve,t,x)-\z_{ik,x_i}^\ve(\bar w^\ve,t,x)\right)\right)^2\phi\, dx\, dt\right].
 \end{equation*}
It is also uniformly bounded in $L^1(\Om)$ and equi-integrable in $\Om$, for all $\gamma:\mathcal{X}\X\mathcal{W}\to[0,1]$ and all $\phi\in C_c^\infty((0,T)\X\R^N)$. On the other hand, since
$\z_{ik,x_i}^\ve(w,t,x)$ is Lipschitz with respect to $w$,   $\z_{ik,x_i}^\ve(\bar w^\ve,t,x)$ strongly converges to  $\z_{ik,x_i}(\bar w,t,x)$, a.s. Therefore, $\sum_{i=1}^N\po_{x_i}\z_{ik,x_i}^\ve(\bar w^\ve,t,x)$ is a.s.\ uniformly bounded in $L^\infty((0,T); L_{\rho_*}^2(\R^N))$, $k=1,\cdots,K$. 
In particular, $\sum_{i=1}^N\po_{x_i}\z_{ik,x_i}^\ve(\bar w^\ve,t,x)$ converges in the weak*-weak topology of $L^\infty((0,T); L_{\rho_*}^2(\R^N))$. Now, since it converges to 
$\sum_{i=1}^N\po_{x_i}\z_{ik,x_i}(\bar w,t,x)$ in the sense of the distributions, we have that 
$$
\sum_{i=1}^N\po_{x_i}\z_{ik,x_i}^\ve(\bar w^\ve,t,x)\wto \sum_{i=1}^N\po_{x_i}\z_{ik,x_i}(\bar w,t,x),
$$ 
in $L^\infty([0,T]; L_{\rho_*}^2(\R^N))$, $k=1,\cdots,K$. Now, concerning the chain rule, we begin 
by observing that, by the usual chain rule, we have,
 for $k=1,\cdots,K$, for all $\psi\in C_b(\R)$, 
 \begin{multline*}
 0=\bbE\g(u^\ve,W)\left[\sum_{i=1}^N\left(\po_{x_i}\z_{ik}^\ve\psi(w^\ve,t,x)-\z_{ik,x_i}^{\ve,\psi}(w^\ve,t,x)\right)\right.\\
 \left.-\psi(w^\ve)\sum_{i=1}^N\left(\po_{x_i}\z_{ik}^\ve (w^\ve,t,x)-\z_{ik,x_i}^\ve(w^\ve,t,x)\right)\right]\\
 \bbE\g(\bar u^\ve,\overline{W})\left[\sum_{i=1}^N\left(\po_{x_i}\z_{ik}^{\ve,\psi}(\bar w^\ve,t,x)-\z_{ik,x_i}^{\ve,\psi}(\bar w^\ve,t,x)\right)\right.\\
 \left.-\psi(\bar w^\ve)\sum_{i=1}^N\left(\po_{x_i}\z_{ik}^\ve (\bar w^\ve,t,x)-\z_{ik,x_i}^\ve(\bar w^\ve,t,x)\right)\right]\\
\overset{\ve\to0}{\longrightarrow} \bbE\g(\bar u,\overline{W})\left[\sum_{i=1}^N\left(\po_{x_i}\z_{ik}^{\psi}(\bar w,t,x)-\z_{ik,x_i}^{\psi}(\bar w,t,x)\right)\right.\\
 \left.-\psi(\bar w)\sum_{i=1}^N\left(\po_{x_i}\z_{ik} (\bar w,t,x)-\z_{ik,x_i}(\bar w,t,x)\right)\right],
 \end{multline*}
 by what has been said above. Therefore, a.s.,  for $k=1,\cdots,K$, for all $\psi\in C_b(\R)$, we have
 \begin{equation*}
 \sum_{i=1}^N\left(\po_{x_i}\z_{ik}^{\psi}(\bar w,t,x)-\z_{ik,x_i}^{\psi}(\bar w,t,x)\right)
 =\psi(\bar w)\sum_{i=1}^N\left(\po_{x_i}\z_{ik} (\bar w,t,x)-\z_{ik,x_i}(\bar w,t,x)\right),
 \end{equation*}
 that is, the chain rule holds.  In particular, we may write the right-hand side of \eqref{e4.entropyiv}
 as \eqref{e4.entropyv}. Thus far we have proved the validity of items (iv) and (v) of Definition~\ref{D:2.1}.  Item (iii) also follows from what has been said above. As for item (ii), from Proposition~\ref{P:3.1}, in the limit as $\mu\to0$, we deduce that $\bar u^\ve$ is a.s.\ uniformly bounded in $L^\infty((0,T); L_{\rho_*}^2(\R^N))$, which implies that $\bar u\in L^\infty((0,T); L_{\rho_*}^2(\R^N))$. Since we have already proved that a.s.\ $\bar u\in L^\infty([0,T]; \B^1(\R^N))$,
 item (ii) follows.  As for (i) in Definition~\ref{D:2.1}, we recall that from   \eqref{e4.entropyiv}, with
 $\eta(w)=\pm w$ we can obtain the integral form of of the weak notion of solution of \eqref{e1.1}. 
 Also,    in   \eqref{e4.entropyiv} we can consider $\phi\in C_c^\infty((-\infty,T)\X\R^N)$, as long as we add the term
 $$
 \int_{\R^N} \eta(u_0(x))\phi(0,x)\,dx,
 $$
 to the  left-hand side of  \eqref{e4.entropyiv}. In particular, when $\eta(w)=\pm w$, we obtain from
 \eqref{e4.entropyiv} the weak formulation including the initial condition
 \begin{multline}\label{e4.1new2}
\int_{(0,T)\X\R^N}\left\{ \bar w\phi_t+ \fbf (\bar w+\bar J)\nabla \phi+ \Abf(\bar w+\bar J):D^2\phi\right\}\,dx\,dt\\
+\int_{\R^N} u_0(x)\phi(0,x)\,dx= 0.
\end{multline} 
{}From \eqref{e4.1new2}, taking $\phi(t,x)=\frac{1}{R^N}g(\frac{x}{R})\varphi(t,x)$, with $g$ as in the proof of Proposition~\ref{P:2.1}, and $\varphi\in C^\infty([0,T]; \AP^2(\R^N))$, and $\AP^2(\R^N)$ denoting the almost periodic function in $\AP(\R^N)$ whose derivatives up to order 2 also belong to $\AP(\R^N)$, we obtain 
 \begin{multline}\label{e4.1new3}
\int_{(0,T)\X\bbG_N}\left\{ \bar w\varphi_t+ \fbf (\bar w+\bar J)\nabla \varphi+ \Abf(\bar w+\bar J):D^2\varphi\right\}\,d\mm(x)\,dt\\
+\int_{\bbG_N} u_0(x)\varphi(0,x)\,d\mm(x)= 0.
\end{multline} 
{}From  \eqref{e4.1new2}, taking $\phi(t,x)=\z^h(t)\psi(x)$  and from \eqref{e4.1new3}, taking $\varphi(t,x)=
\z^h(t)\tilde \psi(x)$,  for $\psi\in C_c^\infty(\R^N)$, $\tilde \psi\in \AP^2(\R^N)$, and a  suitable 
sequence $\z^h\in C_c^\infty([0,T))$,  approaching the indicator function of the interval 
$[t_0,t_0+\d)$, $\d>0$, $t_0\in[0,T)$,  we deduce the weak continuity required in (i) of Definition~\ref{D:2.1}. 

Finally, concerning (vi) of Definition~\ref{D:2.1}, it also follows in a standard way, using an idea in \cite{Ot} (see also \cite{MNRRO}), departing from  
\begin{multline}\label{e4.entropyiv'}
\int\limits_{(0,T)\X\R^N} \left\{\eta(\bar w)\po_t\phi+\sum_{i=1}^N q_i(\bar w,t,x)\po_{x_i}\phi+\sum_{i,j=1}^N r_{ij}(\bar w,t,x)\po_{x_ix_j}^2\phi \right.\\
 \left.  -\sum_{i=1}^N\left(\eta'(\bar w)b_{i,x_i}(\bar w,t,x)-q_{i,x_i}(\bar w,t,x)\right)\phi+\sum_{i,j=1}^N r_{ij,x_i}(\bar w,t,x)\po_{x_i}\phi\right\}\,dx\,dt\\
 +\int_{\R^N}\eta(u_0(x))\phi(0,x)\,dx\ge 0,
\end{multline}    
with $\eta(w)=|w-c|$, $c\in\R$, $\phi\in C_c^\infty((-\infty,T)\X\R^N)$, choosing $\phi(t,x)=\z^h(t)\psi(x)$, for $\psi\in C_c^\infty(\R^N)$ and $\z^h(t)$ a suitable sequence approaching the indicator function of $[0,\d)$, $\d>0$, etc. 
\end{proof}

Having verified that $\bar u$ satisfies all the conditions of Definition~\ref{D:2.1}, we then have that 
$\bar u$ enjoys the $L_{\rho_*}^1(\R^N)$-stability given by Theorem~\ref{T:2.1}. Therefore, we can apply  the Gyongy-Krylov criterion for convergence in probability, introduced in \cite{GH}, to conclude that the whole sequence $u^\ve$ converges strongly in $L_\loc^1((0,T)\X\R^N)$
to an entropy solution of \eqref{e1.1}-\eqref{e1.2}.

We close this section by stating the definition of $L^1(\bbG_N)$-entropy solution and the companion notion of semi-group solution. 

\begin{definition}\label{D:4.10} Let $u_0\in L^1(\bbG_N)$ and  $T>0$ be given. A $L^1(\bbG_N)$-valued stochastic process,  adapted
to $\{\F_t\}$, is said to be a $L^1(\bbG_N)$-entropy solution of \eqref{e1.1}-\eqref{e1.2} if, for almost all $\om\in\Om$, for $w=u-J$,
\begin{enumerate}
 \item[(i)] $w(t)$ is $L^1(\bbG_N)$-weakly continuous on $[0,T]$.
 \item[(ii)] $w\in L^\infty([0,T];  L^1(\bbG_N))$.  
 \item[(iii)] (Weak regularity) For $k=1,\cdots,K$,
 $$
 \sum_{i=1}^N\left(\po_{x_i}\z_{ik}(w,t,z)-\z_{ik,x_i}(w,t,z)\right)\in L^2((0,T)\X\bbG_N).
 $$
 \item[(iv)] (Chain Rule) For $k=1,\cdots,K$,
 $$
 \sum_{i=1}^N\left(\po_{x_i}\z_{ik}^\psi(w,t,z)-\z_{ik,x_i}^\psi(w,t,z)\right)=\psi(w)\sum_{i=1}^N\left(\po_{x_i}\z_{ik}(w,t,z)-\z_{ik,x_i}(w,t,z)\right)
 $$
 a.e.\ in $(0,T)\X\bbG_N$, for any $\psi\in C_b(\R)$.
 
 \item[(v)] (Entropy Inequality) For any entropy-entropy flux triple $(\eta,q,r)$, with $\eta\in\mathcal{E}$, for all $0\le \varphi\in C_c^\infty([0,T)\X\bbG_N)$,
 \begin{multline}\label{e40.entropy}
 \int_0^T\int_{\bbG_N}\left\{(\eta(w)-\eta(u_0))\varphi_t+\sum_{i=1}^N q_i(w,t,z)\po_{x_i}\varphi +\sum_{i,j=1}^N r_{ij}(w,t,z)\po_{x_ix_j}^2\varphi\right. \\
\left.-\sum_{i=1}^N\left(\eta'(w)b_{i,x_i}(w,t,z)-q_{i,x_i}(w,t,z)\right)\varphi
+\sum_{i,j=1}^Nr_{ij,x_i}(w,t,z)\po_{x_i}\varphi\right\}\,d\mm(z)\,dt\\
 \ge \int_0^T\int_{\bbG_N}\eta''(w)\sum_{k=1}^K\left(\sum_{i=1}^N\left(\po_{x_i}\z_{ik}(w,t,z)-\z_{ik,x_i}(w,t,z)\right)\right)^2\varphi\,d\mm(z)\,dt.
 \end{multline}
 \end{enumerate}
 \end{definition}

 \begin{theorem}\label{T:3new} Given, $u_0\in L^1(\bbG_N)$, there exists a $L^1(\bbG_N)$-entropy solution of  \eqref{e1.1}-\eqref{e1.2} in the sense of Definition~\ref{D:4.10} which is the  $L^1(\Om; L^\infty((0,T);L^1(\bbG_N)))$ limit of
$\BAP$-entropy solutions of \eqref{e1.1}-\eqref{e1.2} with initial data in $\AP(\R^N)$. 
Moreover, given two $L^1(\bbG_N)$-entropy solutions $u(t,x), v(t,x)$, both of which are obtained as limits in $L^1(\Om; L^\infty((0,T);L^1(\bbG_N)))$ of $\BAP$-entropy solutions of
\eqref{e1.1}-\eqref{e1.2}, with initial functions converging to $u_0, v_0$ in $L^1(\bbG_N)$, 
 then a.s.\  it holds
\begin{equation}\label{e4.semicont} 
\int_{\bbG_N}|u(t,z)-v(t,z)|\,d\mm(z)\le C(T) \int_{\bbG_N}|u_0(z)-v_0(z)|\, d\mm(z),
\end{equation}
where $C(T)$ is as in \eqref{e2.4AP}. 
\end{theorem}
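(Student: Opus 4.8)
The plan is to realize the $L^1(\bbG_N)$-entropy solution as a strong limit, in $L^1(\Om;L^\infty((0,T);L^1(\bbG_N)))$, of the $\BAP$-entropy solutions of Theorem~\ref{T:4.1} with trigonometric-polynomial initial data, and then to carry the $L^1$-mean semi-contraction of Proposition~\ref{P:2.1} through that limit.

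\textbf{Step 1: approximation and reinterpretation on $\bbG_N$.} Since the trigonometric polynomials are dense in $\B^1(\R^N)\cong L^1(\bbG_N)$ for the norm $N_1$, pick trigonometric polynomials $u_0^n\in\AP^\infty(\R^N)$ with $N_1(u_0^n-u_0)\to0$ and, refining the choice, $N_1(u_0^{n+1}-u_0^n)\le2^{-n}$. By Theorem~\ref{T:4.1} let $u^n$ be the pathwise unique $\BAP$-entropy solution of \eqref{e1.1}--\eqref{e1.2} with datum $u_0^n$, and set $w^n:=u^n-J$. One first verifies that each $u^n$ is an $L^1(\bbG_N)$-entropy solution in the sense of Definition~\ref{D:4.10}: the entropy inequality \eqref{e2.entropy} and item (vi) of Definition~\ref{D:2.1}, tested against $\varphi_R(t,x)=R^{-N}g(x/R)\phi(t,x)$ with $g$ as in the proof of Proposition~\ref{P:2.1} and $0\le\phi\in C_c^\infty([0,T)\X\bbG_N)$ (viewed on $[0,T)\X\R^N$ through $C(\bbG_N)\sim\AP(\R^N)$), turn, after sending $R\to\infty$ exactly as in the passage from \eqref{e4.1new2} to \eqref{e4.1new3}, into \eqref{e40.entropy}; here one uses that the nonlinear compositions occurring there ($\bff(w^n+J)$, $\Abf(w^n+J)$, $q_i$, $r_{ij}$, $\z_{ik}$, and $\eta''(w^n)\bigl(\sum_i(\po_{x_i}\z_{ik}(w^n)-\z_{ik,x_i}(w^n))\bigr)^2$) lie in $\B^1(\R^N)$ whenever $w^n(t)\in\B^1(\R^N)$, thanks to the Lipschitz hypotheses on $\bff,\Abf$ and the bounds of Section~\ref{S:3} (Proposition~\ref{P:3.new} and the $L^2((0,T)\X\bbG_N)$ estimate for the weak-regularity quantity obtained from the entropy inequality with $\eta(w)=\frac12w^2$). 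The chain rule (item (iv)) transfers verbatim from the $\R^N$ statement.

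\textbf{Step 2: convergence.} By Proposition~\ref{P:2.1} and Remark~\ref{R:2.1}, applied to $u^n$ and $u^m$ (whose data lie in $\AP(\R^N)$, so pathwise uniqueness applies), for a.a.\ $\om$ and a.e.\ $t\in(0,T)$
\begin{equation*}
\int_{\bbG_N}|u^n(t)-u^m(t)|\,d\mm\le C(T,\om)\int_{\bbG_N}|u_0^n-u_0^m|\,d\mm ,
\end{equation*}
with the constant $C(T,\om)$ of \eqref{e2.4AP}, which by construction depends only on $T$, on $\bff,\Abf,\Phi$, and on $\om$ --- not on the particular solutions. The right-hand side being independent of $t$, the left-hand side may be replaced by $\operatorname{ess\,sup}_{t\in(0,T)}\|u^n(t)-u^m(t)\|_{L^1(\bbG_N)}$. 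With $N_1(u_0^{n+1}-u_0^n)\le2^{-n}$ this gives, a.s., that $(u^n)$ is Cauchy in $L^\infty((0,T);L^1(\bbG_N))$; let $u$ be the limit and $w:=u-J$. Moreover, by the proof of Proposition~\ref{P:2.1}, $C(T,\om)=e^{c(\om)T}$ with $c(\om)$ depending only on $T$, $\|\bff'\|_\infty$, $\|\abf'\|_\infty$ and $\sup_{t\le T}\bigl(\|J(t)\|_{L^\infty}+\|\nabla J(t)\|_{L^\infty}\bigr)$; the last quantity has Gaussian tails --- hence exponential moments of every order --- by \eqref{e1.3-0}--\eqref{e1.4} and a Kolmogorov/Fernique argument, so $\bbE\,C(T,\om)<\infty$ and the convergence $u^n\to u$ also holds in $L^1(\Om;L^\infty((0,T);L^1(\bbG_N)))$.

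\textbf{Step 3: the limit solves the equation; semi-contraction.} Passing to a subsequence, a.s.\ $u^n(t)\to u(t)$ in $L^1(\bbG_N)$ for a.e.\ $t$. Items (i), (ii), (iii) of Definition~\ref{D:4.10} and the \emph{linear} part of \eqref{e40.entropy} pass to the limit as in the proof of Theorem~\ref{T:4.new}: Lipschitz continuity of $\bff,\Abf,q_i,r_{ij},\z_{ik,x_i}$ gives convergence of the corresponding compositions in $L^1((0,T)\X\bbG_N)$, and the uniform bounds of Step~1 provide the equi-integrability needed. The chain rule (iv) for $w$ follows from the weak-$L^2((0,T)\X\bbG_N)$ convergence of $\sum_i(\po_{x_i}\z_{ik}(w^n)-\z_{ik,x_i}(w^n))$ together with the strong convergence of $\psi(w^n)$, $\psi\in C_b(\R)$. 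For the nonlinear dissipation term in \eqref{e40.entropy} one writes, via the chain rule for $w^n$, $\eta''(w^n)^{1/2}\sum_i(\po_{x_i}\z_{ik}(w^n)-\z_{ik,x_i}(w^n))=\sum_i\bigl(\po_{x_i}\z_{ik}^{\eta''(\cdot)^{1/2}}(w^n)-\z_{ik,x_i}^{\eta''(\cdot)^{1/2}}(w^n)\bigr)$, which is bounded in $L^2((0,T)\X\bbG_N)$ and converges there weakly to the analogous quantity for $w$; weak lower semicontinuity of the $L^2$-norm, followed by the chain rule for $w$, lets one pass to the $\liminf$ in \eqref{e40.entropy}, the initial datum entering through $(\eta(w)-\eta(u_0))\varphi_t$ with $\eta(u_0^n)\to\eta(u_0)$ in $L^1(\bbG_N)$ since $\eta$ is Lipschitz. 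This proves the existence part. For \eqref{e4.semicont}, if $u=\lim u^n$ and $v=\lim v^n$ are two such limit solutions, with data $u_0^n\to u_0$, $v_0^n\to v_0$ in $L^1(\bbG_N)$, apply Proposition~\ref{P:2.1} to the pair $(u^n,v^n)$ --- with the \emph{same} constant $C(T,\om)=C(T)$ --- and let $n\to\infty$ along a common subsequence; both sides converge, a.s.\ and for a.e.\ $t$, to the two sides of \eqref{e4.semicont}.

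\textbf{Main obstacle.} The delicate point is the passage to the limit in the nonlinear entropy-dissipation term of \eqref{e40.entropy} in Step~3: it must remain a lower bound after the limit, which forces the chain-rule substitution $\eta''\rightsquigarrow(\eta'')^{1/2}$ to be combined with weak-$L^2$ lower semicontinuity, and requires establishing the chain rule (iv) for the limit \emph{at the same time} --- the same mechanism that makes the proof of Theorem~\ref{T:4.new} involved. A secondary technical point is the integrability $\bbE\,C(T,\om)<\infty$ needed to upgrade almost sure convergence to convergence in $L^1(\Om;\cdot)$.
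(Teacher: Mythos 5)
Your overall route coincides with the paper's: first reinterpret the $\BAP$-entropy solutions with data in $\AP(\R^N)$ as $L^1(\bbG_N)$-entropy solutions by testing \eqref{e2.entropy} against $R^{-N}g(x/R)\tilde\varphi$ and letting $R\to\infty$; then use the semi-contraction \eqref{e2.4AP} to obtain an a.s.\ Cauchy sequence in $L^\infty((0,T);L^1(\bbG_N))$ for a sequence of $\AP$ data approximating $u_0$; pass to the limit in Definition~\ref{D:4.10}; and deduce \eqref{e4.semicont} by applying Proposition~\ref{P:2.1} to the approximating pairs and letting $n\to\infty$. Your Step 3 is in fact more detailed than the paper's (which only records that the limit is easily checked to be an entropy solution), and your handling of the dissipation term correctly mirrors the mechanism of Theorem~\ref{T:4.new}.

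The one step I would not accept as written is the upgrade from a.s.\ convergence to convergence in $L^1(\Om;L^\infty((0,T);L^1(\bbG_N)))$ via the claim $\bbE\,C(T,\om)<\infty$. The constant in \eqref{e2.30} comes from the Chen--Karlsen stability estimate in \cite{CK}, whose Gr\"onwall rate involves sup-norms of the $x$-derivatives of the coefficients in \eqref{e2.1new}; since $b(w,t,x)=\bff(w+J)-\abf(w+J)\nabla_x J$, these derivatives contain terms at least quadratic in $\nabla J$ (e.g.\ $\abf'(w+J)\,\po_{x_i}J\otimes\nabla J$), as well as $D^2J$, so $c(\om)$ is in general not merely linear in $\sup_{t\le T}\bigl(\|J(t)\|_\infty+\|\nabla J(t)\|_\infty\bigr)$. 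A Gaussian-tail (Fernique) bound for that supremum gives all exponential moments of the supremum itself, but not of its square multiplied by an arbitrary constant, so $\bbE\,e^{c(\om)T}<\infty$ is not justified for all $T$ --- and it is not needed. The paper sidesteps this: by \eqref{e3.new} the norms $\|u^n\|_{L^\infty((0,T);L^1(\bbG_N))}$ are dominated, uniformly in $n$, by a fixed $L^1(\Om)$ function (the $\|u_0^n\|_{L^1(\bbG_N)}$ being bounded and the $J$-dependent term integrable in $\om$), and dominated convergence then upgrades the a.s.\ convergence to convergence in $L^1(\Om;L^\infty((0,T);L^1(\bbG_N)))$ without any moment hypothesis on $C(T,\om)$. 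Replace your Fernique argument by this domination and your proof agrees with the paper's.
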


\begin{proof} Indeed, existence of a $L^1(\bbG_N)$-entropy solution of \eqref{e1.1}-\eqref{e1.2}  in the sense of Definition~\ref{D:4.10},
when $u_0\in\AP(\R^N)$,  follows from the existence of $\BAP$-entropy solution of \eqref{e1.1}-\eqref{e1.2} in the sense of Definition~\ref{D:2.1}, proved above,  by applying  \eqref{e2.entropy} to a test function of the form
$$
\varphi(t,x)= R^{-N}g(x/R) \tilde\varphi(t,x),
$$ 
where $g$ is as in the proof of Proposition~\ref{P:2.1} and $0\le \varphi\in C_c^\infty([0,T); \AP^\infty(\R^N))$. In this way we obtain that the $\BAP$-entropy solution satisfies \eqref{e4.entropy} of Definition~\ref{D:4.10} for all nonnegative $\tilde\varphi\in C_0^\infty ((0,T)\X\bbG_N)$ and $\eta\in\mathcal{E}$. We can then extend this to test functions $0\le \varphi\in C_0^\infty ([0,T)\X\bbG_N)$ by taking in the inequality already obtained  for all nonnegative $\tilde\varphi\in C_0^\infty ((0,T)\X\bbG_N)$ a test function of the form $\tilde\varphi(t,x)=\d_h(t)\varphi(t,x)$, with 
 $0\le \varphi\in C_0^\infty ([0,T)\X\bbG_N)$  and $\d_h$ a suitable sequence in $C_c^\infty((0,T))$ converging  everywhere in $(0,T)$ to the indicator function of the interval $(0,T)$, which proves (v) in Definition~\ref{D:4.10}. 
 
 We next observe that \eqref{e40.entropy} implies
  \begin{multline}\label{e40.entropy'}
 \int_0^T\int_{\bbG_N}\left\{(\eta(w)-\eta(u_0))\varphi_t+\sum_{i=1}^N q_i(w,t,z)\po_{x_i}\varphi +\sum_{i,j=1}^N r_{ij}(w,t,z)\po_{x_ix_j}^2\varphi\right. \\
\left.-\sum_{i=1}^N\left(\eta'(w)b_{i,x_i}(w,t,z)-q_{i,x_i}(w,t,z)\right)\varphi
+\sum_{i,j=1}^Nr_{ij,x_i}(w,t,z)\po_{x_i}\varphi\right\}\,d\mm(z)\,dt\ge 0,
\end{multline}
which, by approximation can be extended for $\eta$ of the form $\eta(w)=|w-\a|$, $\a\in\R$, with
$q(w,t,x)=\sgn(w-\a)(b(w,t,x)-b(\a,t,x))$, $r(w,t,x)=\sgn(w-\a)(a(w,t,x)-a(\a,t,x))$ and $a(w,t,x),\, b(w,t,x)$ given by \eqref{e2.1new}. 
We then  observe that making $\a\to+\infty$ in \eqref{e40.entropy'}, with $\eta(w)=|w-\a|$,  splitting the integral over $\bbG_N$ into two parts, one over $\{w\le \a\}$ and other over 
$\{w>\a\}$, and, after, also making $\a\to-\infty$ and proceeding similarly, we obtain the following integral equation
\begin{multline}\label{e40.3eq}
 \int_0^T\int_{\bbG_N} \{w\varphi_t+\bff(w+J)\cdot\nabla\varphi +\Abf(w+J):D^2\varphi\}\,d\mm(z)\,dt
 \\
 =\int_{\bbG_N} u_0(z) \varphi(0,z )\,d\mm(z),
 \end{multline}
for all $\varphi\in C_0^\infty ([0,T)\X\bbG_N)$. The fact that \eqref{e40.3eq} holds for all $\varphi\in C_0^\infty ([0,T)\X\bbG_N)$ implies, in turn, in a standard way, the item (i) of Definition~\ref{D:4.10}. 

We may verify the item~(ii) of Definition~\ref{D:4.10} also as a consequence of item (v) of Definition~\ref{D:4.10}. Indeed, from \eqref{e40.entropy'} with $\eta(w)=|w|$ and $\varphi(t,z)=\d_h(t)$, where the latter is a sequence in $C_c^\infty((0,T))$ converging everywhere to $(0,t_0)$, and $t_0$ is any Lebesgue point of $\int_{\bbG_N}|w(t)|\,d\mm(z)$, we obtain
\begin{equation}\label{e3.new}
\int_{\bbG_N}|w(t_0,z)|\,d\mm(z)\le \int_{\bbG_N}|u_0|\,d\mm(z) + \int_0^T\int_{\bbG_N} C(J, \nabla J, \nabla^2J)\,d\mm(z)\,dt,
\end{equation}
where $C(J,\nabla J, \nabla^2 J)$ is a continuous function of $T$,  $J$ and its derivatives up to the second order. In particular, a.s.,  $w\in L^\infty([0,T];  L^1(\bbG_N))$, which  proves (ii). 

Item (iii) also follows from \eqref{e40.entropy} by taking $\varphi$, by approximation, in the form $\varphi(t,z)={\bf 1}_{[0,T)}$ for  $\eta$ in a sequence in ${\mathcal E}$ converging everywhere in $\R$ to $\frac12
|w|^2$ such that $\eta''(w)\to 1$, everywhere in $\R$,  which gives, since we are assuming $u_0\in\AP(\R^N)$, 
\begin{multline}\label{e4.100}
 \int_0^T\int_{\bbG_N}\sum_{k=1}^K\left(\sum_{i=1}^N\left(\po_{x_i}\z_{ik}(w,t,z)-\z_{ik,x_i}(w,t,z)\right)\right)^2\,d\mm(z)\,dt \\
 \le \int_{\bbG_N}|u_0(z)|^2d\mm(z) + C\int_0^T\int_{\bbG_N}|u(t,z)|^2d\mm(z)dt 
\end{multline}
where $C>0$ depends on $T$, $J$, $\nabla J$ and $D^2J$. Now, since the bound from Proposition~\ref{P:3.new} is uniform in $\mu$ and $\ve$, then \eqref{e3.12} holds for $u$ and the right hand side of \eqref{e4.100} is finite a.e., which implies the validity of item (iii). Finally, item (iv) follows directly from (iv) of Definition~\ref{D:2.1}, by multiplying that equality by a test function of the form 
$\tilde\varphi(t,x)=1/R^Ng(x/R) \varphi(t,x)$, where $g$ is as in the proof of Proposition~\ref{P:2.1}, $\varphi\in C_c((0,T)\X\bbG_N)$, making $R\to\infty$,  
which gives the equality in the form of an integral equation  valid for all $\varphi\in C_c((0,T)\X\bbG_N)$, which in turn proves (iv) of Definition~\ref{D:4.10}, and co  
  concludes the proof that $\BAP$-entropy solutions are also (or extend to) $L^1(\bbG_N)$-entropy solutions. 

{}Also, given two  $\BAP$-entropy
solutions $u(t,x), v(t,x)$, with initial data $u_0, v_0\in \AP(\R^N)$, we get from  \eqref{e2.4AP}
\begin{equation}\label{e4.10} 
\int_{\bbG_N} |u(t,z)-v(t,z)|\,d\mm(z)\le C(T)\int_{\bbG_N}|u_0(z)-v_0(z)|\,d\mm(z).
\end{equation}
Now, from \eqref{e4.10} we can extend the existence of $L^1(\bbG_N)$-entropy solutions for initial data $u_0\in L^1(\bbG_N)$. Indeed,  if we approximate the initial data $u_0\in L^1(\bbG_N)\sim \B^1(\R^N)$ in $\B^1(\R^N)$ by a sequence $u_{0n}\in \AP(\R^N)$, from \eqref{e4.10} we deduce that the corresponding $\BAP$-entropy solutions $u_n$, in the sense of Definition~\ref{D:2.1}, form a Cauchy sequence in $L^\infty((0,T); L^1(\bbG_N))$, and so, there is $u\in L^\infty((0,T);L^1(\bbG_N))$ such that $u_n\to u$ in $L^\infty((0,T);L^1(\bbG_N))$. This is true a.s.\ in $\Om$ and since by \eqref{e3.new} the norm of the $u_n$'s in $L^\infty((0,T);L^1(\bbG_N))$  is
 bounded by a function in $L^1(\Om)$, we conclude by dominated convergence that 
$$
u_n\to u \quad \text {in $L^1(\Om;L^\infty((0,T); L^1(\bbG_N)))$}.
$$ 
It is then easy to check that the limit $u$ is indeed a $L^1(\bbG_N)$-entropy solution in the sense of Definition~\ref{D:4.1}. Moreover, the contraction property \eqref{e4.10}  extends to any pair of such  $L^1(\bbG_N)$-entropy solutions with initial data in $L^1(\bbG_N)$, obtained as limit in $L^1(\Om; L^\infty([0,T]; L^1(\bbG_N)))$ of  $\BAP$-entropy solutions. In this way we have proved the existence of a $L^1(\bbG_N)$-entropy solution to \eqref{e1.1}-\eqref{e1.2} for any initial data in $L^1(\bbG_N)$. The proof of \eqref{e4.semicont} follows from what has already been seen. 
\end{proof}

 \begin{definition}\label{D:4.2} Let $T>0$ be given. A $L^1(\bbG_N)$-entropy solution of \eqref{e1.1}-\eqref{e1.2} is said to be a $L^1(\bbG_N)$-semigroup solution if it is the limit in $L^1(\Om;L^\infty((0,T);L^1(\bbG_N)))$ of a sequence of $\BAP$-entropy solutions of problems like \eqref{e1.1}-\eqref{e1.2}, with initial functions converging to $u_0$ in $L^1(\bbG_N)$. 
 \end{definition}
 
 The following proposition justifies the introduction of the notion of $L^1(\bbG_N)$-semigroup solution of \eqref{e1.1}-\eqref{e1.2}. The proof is straightforward. 
 
 \begin{proposition}\label{P:4.1} Let $u$ and $v$ be two $L^1(\bbG_N)$-semigroup solutions of \eqref{e1.1}-\eqref{e1.2} with initial functions $u_0,v_0\in L^1(\bbG_N)$. Then \eqref{e4.semicont} holds.
 \end{proposition}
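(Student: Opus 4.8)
The plan is to unwind the definition of $L^1(\bbG_N)$-semigroup solution and pass to the limit in the $L^1$-mean semi-contraction estimate already available for $\BAP$-entropy solutions. By Definition~\ref{D:4.2} there are sequences $u_n,v_n$ of $\BAP$-entropy solutions of problems of the form \eqref{e1.1}-\eqref{e1.2} — with the same flux $\bff$, the same viscosity $\Abf$ and the same noise $\Phi\,dW$ — having initial data $u_{0n},v_{0n}\in\AP(\R^N)$ with $u_{0n}\to u_0$ and $v_{0n}\to v_0$ in $L^1(\bbG_N)\sim\B^1(\R^N)$, such that $u_n\to u$ and $v_n\to v$ in $L^1(\Om;L^\infty((0,T);L^1(\bbG_N)))$.

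First I would apply Proposition~\ref{P:2.1}, written in the form \eqref{e2.4AP'} via Remark~\ref{R:2.1}, to each pair $(u_n,v_n)$: a.s., for a.e.\ $t\in[0,T]$,
\begin{equation*}
\int_{\bbG_N}|u_n(t)-v_n(t)|\,d\mm\le C(T)\int_{\bbG_N}|u_{0n}-v_{0n}|\,d\mm .
\end{equation*}
The one point that deserves care is the uniformity of $C(T)$ in $n$: tracing the proof of Proposition~\ref{P:2.1}, the constant originates from Proposition~\ref{P:New} and the concluding Gr\"onwall step, and depends only on $T$, on $\bff$ and $\Abf$, and on the noise path $J=\sum_{k\ge1}g_k\b_k$ (through $\|\nabla J\|_\infty$ and $\|\nabla^2 J\|_\infty$ on $[0,T]$), not on the particular entropy solutions involved; hence the same $C(T)$ serves for every $n$, and it is precisely the constant $C(T)$ appearing in \eqref{e4.semicont}.

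Then I would let $n\to\infty$. Along a subsequence (not relabelled) $u_n\to u$ and $v_n\to v$ $\bbP$-a.s.\ in $L^\infty((0,T);L^1(\bbG_N))$, so for $\om$ in a set of full measure $\sup_{t\in[0,T]}\|u_n(t)-u(t)\|_{L^1(\bbG_N)}\to 0$ and likewise for $v$; therefore the left-hand side above tends to $\int_{\bbG_N}|u(t)-v(t)|\,d\mm$ for a.e.\ $t$. The right-hand side tends to $C(T)\int_{\bbG_N}|u_0-v_0|\,d\mm$ by the triangle inequality, since $u_{0n}\to u_0$ and $v_{0n}\to v_0$ in $L^1(\bbG_N)$. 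This yields \eqref{e4.semicont} a.s., for a.e.\ $t\in[0,T]$.

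I expect no genuine obstacle here; indeed, since by Definition~\ref{D:4.2} a semigroup solution is precisely one of the $L^1(\bbG_N)$-entropy solutions considered in Theorem~\ref{T:3new}, one may alternatively just invoke the ``moreover'' part of that theorem. The only thing worth making explicit is the independence of $C(T)$ from the approximating solutions, which is exactly why the bound survives the passage to the limit.
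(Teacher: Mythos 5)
Your argument is correct and is essentially the paper's own route: the paper declares the proof of Proposition~\ref{P:4.1} straightforward precisely because, by Definition~\ref{D:4.2}, semigroup solutions are limits in $L^1(\Om;L^\infty((0,T);L^1(\bbG_N)))$ of $\BAP$-entropy solutions, so the ``moreover'' part of Theorem~\ref{T:3new} (itself proved by the same limiting argument you spell out, with the constant $C(T)$ from Proposition~\ref{P:2.1} independent of the approximating pair) gives \eqref{e4.semicont}. Your explicit remark on the uniformity of $C(T)$ in $n$ is exactly the point that makes the passage to the limit legitimate.
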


\section{Reduction to the periodic case}\label{S:5}

In this section and the next one we consider solutions of \eqref{e1.1}-\eqref{e1.2} taking values in a separable subspace of $L^1(\bbG_N)$. More specifically, let  $\Lambda=\{\l_j\,:\, j=1,\cdots,P\}$  a finite set of vectors in $\R^N$ linearly independent over $\Z$. We consider the  closed real algebra generated by 1 and the complex trigonometrical functions $e^{\pm i 2\pi\l_j\cdot x}$, $j=1,\cdots, P$. Any function $g$  belonging  to this algebra has spectrum $\Sp(g)$ contained in the smallest additive group generated by $\Lambda$, which we denote $\G_\Lambda$. It is not difficult to see that this closed algebra is the closed subspace of $\AP(\R^N)$ formed by the functions of the form $g(y(x))$, with $g\in C(\bbT^P)$, where $\bbT^P$ is the $P$-dimensional torus and $y(x):=(\l_1\cdot x, \cdots, \l_P\cdot x)$. Indeed, it is the completion in the $\sup$-norm of the real trigonometric polynomials of the form 
$$
s(y(x))=\sum_{\bar k\in {\mathfrak F}}a_{\bar k}e^{i2\pi \bar k\cdot y(x)},
$$
where ${\mathfrak F}\subset\Z^P$ is a finite set. Since we are considering real trigonometric polynomials, this means that ${\mathfrak F}$ should be symmetric, that is ${\mathfrak F}=-{\mathfrak F}$, and $a_{-\bar k}=\bar a_{\bar k}$, where, as usual, $\bar a_{\bar k}$ denotes the complex conjugate of $a_{\bar k}$.  Since the completion in the $\sup$-norm of the trigonometric polynomials 
$$
s(y)=\sum_{\bar k\in \mathfrak F}a_{\bar k}e^{i2\pi \bar k\cdot y},
$$
is exactly $C(\bbT^P)$, the assertion follows.

We henceforth denote by $\AP_*(\R^N)$ this subspace of $\AP(\R^N)$ and we will assume that  the noise functions $g_k$, $k\in\N$, belong to $\AP_*(\R^N)$.   

By a well known extension of the Stone-Weierstrass theorem (see, e.g,  \cite{DS}, p.274--276, Theorem~18 and Corollary~19) we have that $\AP_*(\R^N)$ is isometrically isomorphic with $C(\bbG_{*N})$, where $\bbG_{*N}$ is the topological subgroup of the Bohr compact $\bbG_N$, whose topology is generated by $\{e^{\pm 2\pi i\l_j\cdot x}\,:\, j=0,1,2,\cdots,P\}$. 
We denote by $\B_*^1(\R^N)$ the completion of $\AP_*(\R^N)$ with respect to the semi-norm $N_1$ defined in beginning of Section~\ref{S:2}. Therefore, we have that $\B_*^1(\R^N)$ is isometrically isomorphic with 
$L^1(\bbG_{*N})$.

 For simplicity, let us first consider the situation where we have as the initial data $u_0$ in \eqref{e1.1}-\eqref{e1.2} a trigonometrical polynomial. So, for some finite symmetric set ${\mathfrak F}\subset\Z^P$ as above, with $a_{-\bar k}=\bar a_{\bar k}$,
   $u_0$ can be written as
\begin{equation}\label{e4.0}
u_0(x)=\sum_{\bar k\in {\mathfrak F}} a_{\bar k}e^{2\pi i\bar k\cdot y(x)}.
\end{equation}
Therefore, $u_0(x)=v_0(y(x))$ where
\begin{equation}\label{e4.0'}
v_0(y)=\sum_{\bar k\in {\mathfrak F}} a_{\bar k}e^{2\pi i \bar k\cdot y}
\end{equation}
also,  $g_k(x)=h_k(y(x))$, with $h_k\in C(\bbT^P)$, 
and, as defined above, $y(x)=(y_1(x),\cdots,y_P(x))$, with
\begin{equation}\label{e4.0''}
y_j(x)=\l_j\cdot x=\sum_{l=1}^n\l_{jl}x_l,\ \l_j=(\l_{j1},\cdots,\l_{jN}).
\end{equation}

Consider the equation
\begin{equation}\label{e5.1}
v_t+\div_y \tilde\bff(v)=\nabla_y\cdot (\tilde \abf(v)\nabla_y v)+ \tilde \Phi\,d\tilde W,\quad \ \tilde \Phi\,d\tilde W=\sum_{k=1}^\infty h_k(y)\,d\b_l,
\end{equation}
with $\tilde \bff(v)=(\tilde f_1(v),\cdots,\tilde f_P(v))$,
$$
\tilde f_j(v)=\l_j\cdot \bff(v)=\sum_{\ell=1}^N\l_{j\ell} f_\ell(v),\quad j=1,\cdots,P,
$$
and $\tilde \abf(v)=\tilde \s(v)\tilde \s(v)^\top$,
 where $\tilde \s(v)$ is the $N\X P$ matrix defined in terms of its columns by 
 $\tilde \s(v)=[\tilde \s_1(v), \cdots, \tilde\s_P(v)]$  
 where
 $$
\tilde \s_j(v):=\l_j^\top \s(v),\quad j=1,\cdots,P.
$$
The Cauchy problem in $\bbT^P$ for \eqref{e5.1} is formed by prescribing the initial datum
\begin{equation}\label{e5.1'}
v(0,y)=v_0(y), \quad y\in\bbT^P.
\end{equation}
Setting $\tilde w=v-\tilde J$, with
$$
\tilde J(t,y):=\sum_{l=1}^\infty h_l(y) \b_k(t),
$$
we can write \eqref{e5.1} as
\begin{equation}\label{e4.1''}
\tilde w_t+\div_y (\tilde\bff(\tilde w+\tilde J)+\tilde\abf(\tilde w+\tilde J)\nabla\tilde J)-
\nabla_y\cdot (\tilde {\abf}(\tilde w+\tilde J)\nabla_y \tilde w)=0,
\end{equation}
and \eqref{e5.1'} as
\begin{equation}\label{e4.1'''}
\tilde w(0,y)=v_0(y),\quad y\in\bbT^P.
\end{equation}
We can then define entropy solution for the periodic problem \eqref{e5.1}-\eqref{e5.1'} in a way entirely analogous to Definition~\ref{D:2.1}.
Let us then denote 
\begin{align*}
& \tilde\z_{ik}(\tilde w,t,y)=\int_0^{\tilde w}\tilde\s_{ik}(v,t,y)\,dv,\\
&\tilde \z_{ik}^\psi(\tilde w,t,y)=\int_0^{\tilde w}\psi(v)\tilde\s_{ik}(v,t,y)\, dv,\quad \text{for $\psi\in C(\R)$},\\
&\qquad\qquad i=1,\cdots,P,\quad k=1,\cdots, N.
\end{align*}

\begin{definition}\label{D:4.0} Let $v_0\in L^1(\bbT^P)$ and  $T>0$ be given. A $L^1(\bbT^P)$-valued stochastic process,  adapted
to $\{\F_t\}$, is said to be an entropy solution of \eqref{e5.1}-\eqref{e5.1'} if, for almost all $\om\in\Om$, for $\tilde w=v-\tilde J$,
\begin{enumerate}
 \item $\tilde w\in L^1(\bbT^P)$-weakly continuous on $[0,T]$,
 \item $\tilde w\in L^\infty([0,T]; L^1(\bbT^P))$,  
 \item (Weak regularity)
 $$
 \sum_{i=1}^P\left(\po_{y_i}\tilde \z_{ik}(\tilde w,t,y)-\tilde \z_{ik,y_i}(\tilde w,t,y)\right)\in L^2((0,T)\X\bbT^P), \quad  k=1,\cdots, N.
 $$
 \item (Chain Rule) For $k=1,\cdots,N$,
 $$
 \sum_{i=1}^P\left(\po_{y_i}\tilde \z_{ik}^\psi(\tilde w,t,y)-\tilde\z_{ik,y_i}^\psi(\tilde w,t,y)\right)=\psi(\tilde w)\sum_{i=1}^P\left(\po_{y_i}\tilde\z_{ik}(\tilde w,t,y)-\tilde\z_{ik,y_i}(\tilde w,t,y)\right)
 $$
 a.e.\ in $(0,T)\X\bbT^P$, for any $\psi\in C(\R)$.
 
 \item (Entropy Inequality) For any entropy-entropy flux triple $(\eta,q,r)$, 
 \begin{multline}\label{e4.entropy-p}
 \po_t\eta(\tilde w)+\sum_{i=1}^P\po_{y_i}q_i(\tilde w,t,y)-\sum_{i,j=1}^P\po_{y_iy_j}^2r_{ij}(\tilde w,t,y)\\
 +\sum_{i=1}^P\left(\eta'(\tilde w)b_{i,y_i}(\tilde w,t,x)-q_{i,y_i}(\tilde w,t,y)\right)+\sum_{i,j=1}^P\po_{y_i}r_{ij,y_i}(\tilde w,t,x)\\
 \le -\eta''(\tilde w)\sum_{k=1}^N\left(\sum_{i=1}^P\left(\po_{y_i}\tilde\z_{ik}(\tilde w,t,y)-\tilde\z_{ik,y_i}(\tilde w,t,y)\right)\right)^2\quad \text{in $\DD'((0,T)\X\bbT^P)$}.
 \end{multline}
 
 \item (Initial Condition) 
\begin{equation}\label{e2.D21''}
\lim_{t\to0+}\intl_{\bbT^P}|v(t,y)-v_0(y)|\,dy=0,
\end{equation}
 \end{enumerate}
 \end{definition}

Existence and uniqueness of a periodic entropy solution of \eqref{e5.1}-\eqref{e5.1'} can be proved in a way similar to what was done for the almost periodic case, but we need to impose a further non-degeneracy condition, similar to \eqref{e3.nondeg1},  with the additive group $\Z^N$ replaced by $\G_\Lambda$, so wherever
we have $n\in\Z^N$ in \eqref{e3.nondeg1} we replace it by $\b\in\G_\Lambda$.
So, the $\G_\Lambda$-symbol  is defined by
$$
\LL(i\tau,i \b, \xi):=i(\tau+b(\xi)\cdot \b) + {\b}^\top \bfa(\xi)\b,
$$  
where $b(\xi)=\bff'(\xi)$ and  $\b\in\G_\Lambda$.  
For $J,\d>0$ and $\eta\in C_b^\infty(\R)$ nonnegative, let 
\begin{equation*}
\begin{aligned}
\Om_{\LL}^\eta(\tau,\eta;\d)&:=\{\xi\in \supp\eta\,:\, |\LL(i\tau, i \b, \xi)|\le \d\},\\
\om_{\LL}^\eta(J;\d) &:= \sup_{\tiny\begin{matrix} \tau\in\R, \b\in \G_\Lambda\\ |\b|\sim J\end{matrix}}|\Om_{\LL}^\eta(\tau,i \b;\d)|,
\end{aligned}
\end{equation*}
where, for $\b=m_1\l_1+\cdots+m_P\l_P\in\G_\Lambda$, $m_j\in\Z$, $j=1,\cdots,P$,  we define $|\b|:=
\sqrt{m_1^2+\cdots+m_P^2}$.
Let $\LL_{\xi}:=\po_\xi\LL$. We suppose that there exist $\a\in (0,1)$, $\g>0$ and a measurable function  $\vartheta\in L_\loc^\infty(\R;[1,\infty))$ such that
\begin{equation}\label{e3.ndeg-new}
\begin{aligned}
\om_{\cL}^\eta(J;\d) &\lesssim_\eta \left(\frac{\d}{J^\g}\right)^\a,\\
\sup_{\tiny{\begin{matrix}\tau\in\R,\b\in\G_\Lambda \\ |\b|\sim J \end{matrix}}}\sup_{\xi\in\supp \eta}\frac{|\cL_{\xi}(i\tau,i\b;\xi)|}{\vartheta(\xi)}&\lesssim_\eta J^\g,\qquad \forall \d>0,\, J\gtrsim 1.
\end{aligned}
\end{equation}
This condition guarantees that \eqref{e5.1} enjoys a non-degeneracy condition in $\R^P$ similar to \eqref{e3.nondeg1}.

We recall the concepts of kinetic measure and of kinetic solution \eqref{e5.1}-\eqref{e5.1'} in the $L^1$ periodic setting from \cite{GH}.

\begin{definition}[Periodic kinetic measure ({\em cf.} \cite{GH})] \label{D:4.1} A map $m$ from $\Om$ to the set of non-negative Radon measures over $[0,T]\X\bbT^P\X\R$ is a kinetic measure if
\begin{enumerate}
\item $m$ is measurable, that is, for all $\phi\in C_c([0,T]\X\bbT^P\X\R)$, $\la m,\phi\ra:\Om\to \R$
is a measurable function;

\item $m$ vanishes for large $\xi$ in the sense that 
\begin{equation}\label{e5.2} 
\lim_{\ell\to\infty} \frac1{2^\ell} \bbE m(A_{2^\ell})=0,
\end{equation}
where $A_{2^\ell}=[0,T]\X\bbT^P \X\{\xi\in\R\,:\,\ 2^\ell\le |\xi|\le 2^{\ell+1}\}$,
\item for all $\phi\in C_c(\bbT^P\X\R)$, the process
$$
t\mapsto \int_{[0,t]\X\bbT^P\X\R}\phi(y,\xi)\, dm(s,y,\xi)
$$
is predictable.

\end{enumerate}
\end{definition}

\begin{definition}[Periodic kinetic solution ({\em cf.} \cite{GH}) ]\label{D:4.2P} 
Let $v_0\in L^1(\bbT^P)$. A function $v\in L^1(\Om\X[0,T],\cP, d\bbP\otimes dt; L^1(\bbT^P))$
is called a kinetic solution of  \eqref{e5.1}-\eqref{e5.1'}, where $\cP$ denotes the predictable $\s$-algebra, if
\begin{enumerate}
\item[(i)] For all $\phi\in C_c^\infty(\R)$, $\phi\ge0$,
$$
\div \int_0^v\phi(\z)\tilde \s(\z)\,d\z\in L^2(\Om\X[0,T]\X\bbT^P),
$$
where the divergence in $\R^P$ of a $N\X P$ matrix  means the $N$-vector resulting of the divergence of each of its $N$ lines.
\item[(ii)]({\em cf.} \cite{CP}) For all $\phi_1,\phi_2\in C_c^\infty(\R)$, $\phi_1,\phi_2\ge0$, the following chain
rule formula holds true in $L^2(\Om\X[0,T]\X\bbT^P)$,
\begin{equation}\label{e4.3}
\div \int_0^v\phi_1(\z)\phi_2(\z)\tilde \s(\z)\,d\z=\phi_1(v)\div\int_0^v\phi_2(\z)\tilde\s(\z)\,d\z.
\end{equation}
\item[(iii)]  Let $\phi\in C_c^\infty(\R)$, $\phi\ge0$, and let $n^\phi:\Om\to\M^+([0,T]\X\bbT^P)$ be defined as
follows: for all $\varphi\in C_c^\infty([0,T]\X\bbT^P)$,
\begin{equation}\label{e4.4}
n^\phi(\varphi)=\int_0^T\int_{\bbT^P}\varphi(t,x)\left|\div\int_0^v\sqrt{\phi(\z)}\,\tilde \s(\z)\,d\z\right|^2\,dx\,dt.
\end{equation}
There exists a kinetic measure $m$ such that, for all $\varphi\in C_c^\infty([0,T]\X\bbT^P)$, $\varphi\ge0$
and $\phi\in C_c^\infty(\R)$, $\phi\ge0$, it holds $m(\varphi \phi)\ge n^\phi(\varphi)$, $\bbP$-a.s., and, in addition, if $f(\om,t,x,\xi)={\bf 1}_{v(\om,t,x)>\xi}$,  the pair $(f, m)$ satisfies, for all $\varphi\in C_c^\infty([0,T]\X\bbT^P\X\R)$, $\bbP$-a.s., 
\begin{multline}\label{e4.5} 
\int_0^T\la f(t),\po_t\varphi (t)\ra\,dt +\la f_0,\varphi(0)\ra +\int_0^T\la f(t),\tilde b\cdot \nabla\varphi(t)\ra\,dt\\
+\int_0^T\la f(t), \tilde \Abf: D^2\varphi(t)\ra\,dt\\
=-\sum_{k\ge1}\int_0^T\int_{\bbT^P}h_k(y)\varphi(t,y,v(t,y))\,dy\,d\b_k(t)+\\
-\frac{1}{2}\int_0^T\int_{\mathbb{T}^P}\sum_{k\ge 1}|h_k(x)|^2\partial_\xi\varphi(t,y,v(t,y))\, dy\, dt +  m(\po_\xi\varphi),
\end{multline}
where $\tilde b(\xi)=\tilde \bff'(\xi)$
\end{enumerate}
\end{definition}

Existence and uniqueness of a kinetic solution of   \eqref{e5.1}-\eqref{e5.1'} with the non-degeneracy condition
implied by \eqref{e3.ndeg-new} was established in \cite{GH}. The kinetic solution must coincide with the entropy solution since both are obtained as the a.s.\   limit in $L_\loc^1((0,T)\X\R^P)$ of the solutions of the regularized parabolic approximation. 

The kinetic solutions of the periodic problem  \eqref{e5.1}-\eqref{e5.1'}  satisfy the following contraction property 
obtained in \cite{DHV},
as a consequence of the doubling of variables method introduced in \cite{DV}. Observe that, since we are dealing with an additive noise, the inequality holds a.s., instead of in average, i.e., for the expected values of the norms. 

\begin{proposition}[{\em cf.} \cite{DHV}] \label{P:5.stabper} Let $v_1$ and $v_2$ be two kinetic solutions of  \eqref{e5.1}-\eqref{e5.1'}, with initial data $v_{01},v_{02}$. Then, a.s., for a.e.\ $t\in[0,T]$, we have
\begin{equation}\label{e5.stabper}
\int_{\bbT^P}|v_1(t,x)-v_2(t,x)|\,dx\le \int_{\bbT^P}|v_{01}(x)-v_{02}(x)|\,dx.
\end{equation}
\end{proposition}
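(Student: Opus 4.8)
The plan is to carry out the kinetic doubling of variables of \cite{DV} in the degenerate parabolic framework of \cite{DHV}, taking advantage of the fact that the noise in \eqref{e5.1} is additive: this makes the stochastic terms disappear in the limit, so that the inequality holds almost surely rather than merely in expectation. Let $(f_1,m_1)$ and $(f_2,m_2)$ be the kinetic solution--measure pairs of Definition~\ref{D:4.2P} attached to $v_1$ and $v_2$, with $f_i(t,x,\xi)=\mathbf 1_{v_i(t,x)>\xi}$, and set $\bar f_i:=1-f_i$; then $\bar f_2$ satisfies the equation obtained from \eqref{e4.5} by changing the signs of $m_2$, of the stochastic term and of the It\^o term and by replacing the initial datum by its complement. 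Starting from the pointwise identity
\begin{equation*}
\int_{\R}\bigl(f_1(t,x,\xi)\bar f_2(t,x,\xi)+\bar f_1(t,x,\xi)f_2(t,x,\xi)\bigr)\,d\xi=|v_1(t,x)-v_2(t,x)|,
\end{equation*}
it suffices to estimate $\int_{\bbT^P}\!\int_{\R}f_1\bar f_2\,d\xi\,dx$ and the symmetric term. For this I would test \eqref{e4.5} for $f_1(t,x,\xi)$ against $\rho_\ve(x-y)\psi_\d(\xi-\z)$, test the complementary equation for $\bar f_2(t,y,\z)$ against the same kernel, combine the two via It\^o's product formula in time, integrate over all space--velocity variables, and then let first $\d\to0$ with $\ve$ fixed and afterwards $\ve\to0$; here $\rho_\ve$ is a mollifier on $\bbT^P$ and $\psi_\d$ an approximation of $\d_0$ on $\R$.

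After integration by parts the first-order (transport) terms produce contributions of the form $\int(\tilde b(\xi)-\tilde b(\z))\cdot\nabla_x\rho_\ve(x-y)\,\psi_\d(\xi-\z)\,f_1\bar f_2$, which vanish as $\d\to0$ with $\ve$ fixed, exactly as in \cite{DV}, using the $L^1$-bound and weak continuity built into Definition~\ref{D:4.2P}; the initial term is kept by admitting test functions that need not vanish at $t=0$ and mollifying in time. The genuinely new point, relative to the hyperbolic case of \cite{EFM}, is the second-order term: the three pieces $\tilde\abf(\xi):D_x^2$, $\tilde\abf(\z):D_y^2$ and the mixed diffusion $-2\sum_{i,j=1}^P\tilde a_{ij}\,\partial_{x_i}\partial_{y_j}$ coming from the parabolic dissipation measures, all acting on $\rho_\ve(x-y)$, have to be assembled together. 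Using $(\partial_{x_i}+\partial_{y_i})\rho_\ve(x-y)\equiv0$ together with $\tilde\abf(\xi)\approx\tilde\abf(\z)$ on $\supp\psi_\d$, this combination reduces to a remainder negligible in the double limit plus terms dominated by $m_1(\partial_\xi\cdot)$ and $m_2(\partial_\z\cdot)$, which carry a favorable sign and are discarded. The chain-rule identity \eqref{e4.3} is exactly what makes these manipulations of the dissipation terms legitimate, following Chen--Perthame \cite{CP} and \cite{DHV}. I expect this cross-diffusion bookkeeping to be the main technical obstacle.

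Finally, the martingale parts of the kinetic equations for $f_1$ and for $\bar f_2$ combine into a single martingale $N_\ve(t)$, while the two It\^o corrections together with the bracket of these martingales combine into an integral whose integrand is proportional to $\sum_{k\ge1}|h_k(x)-h_k(y)|^2$ tested against $\rho_\ve(x-y)$. By the uniform Lipschitz bound \eqref{e1.4} on the noise coefficients (inherited by the $h_k$ from the $g_k$ via $g_k=h_k\circ y$), both this correction and the quadratic variation $\langle N_\ve\rangle_t$ are bounded by $C\,t\sup_{|z|\lesssim\ve}\sum_{k\ge1}\|h_k(\cdot+z)-h_k(\cdot)\|_{\infty}^2$, which tends to $0$ as $\ve\to0$; hence, passing to a subsequence if needed, all stochastic terms vanish almost surely in the limit. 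This is where additivity of the noise enters, and why the inequality holds $\om$-wise and not only for expectations. Collecting the limits we obtain, for a.e.\ $t\in[0,T]$,
\begin{equation*}
\int_{\bbT^P}|v_1(t,x)-v_2(t,x)|\,dx\le\int_{\bbT^P}|v_{01}(x)-v_{02}(x)|\,dx,
\end{equation*}
which is \eqref{e5.stabper}.
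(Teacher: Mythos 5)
Your sketch follows the same route the paper relies on: the paper does not prove Proposition~\ref{P:5.stabper} itself but quotes it from \cite{DHV}, i.e.\ the kinetic doubling of variables of \cite{DV} adapted to the degenerate parabolic setting, together with exactly the observation that additivity of the noise allows the stochastic terms to be controlled pathwise (as you do, via the quadratic-variation bound coming from \eqref{e1.4}), so that the contraction holds a.s.\ rather than merely in expectation. Your handling of the martingale and It\^o-correction terms and of the cross-diffusion terms is consistent with that argument, so the proposal is correct at the level of a sketch and essentially coincides with the proof the paper invokes.
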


We next establish a result which is the analogue of theorem~2.1 of \cite{Pv}, where the method of reduction to the periodic case was introduced.

\begin{theorem}\label{T:5.1} Let $v:\Om\X(0,T)\X\bbT^P\to\R$, be a periodic entropy solution of \eqref{e5.1}-\eqref{e5.1'}, where $v_0(y)$ is a trigonometric polynomial as in \eqref{e4.0'}. Let $y(x):=(\l_1\cdot x, \cdots, \l_P\cdot x)$. Then, there exists a set $Z\subset\R^P$ of total measure, that is,  $\R^P\setminus Z$ has $P$-dimensional Lebesgue measure zero, such that, for all  $z\in Z$, the function $u(t,x)=v(t,z+y(x))$ is a  $\BAP$-entropy solution of an initial value problem as \eqref{e1.1}-\eqref{e1.2} with initial function $u_0(x)=v_0(z+y(x))$ and noise functions $h_k(z+y(x))$. Moreover, $Z$ does not depend on $\om\in\Om$ and can be taken as
the same for all trigonometric polynomials $v_0(y)$ in a countable family $\mathcal{T}$ dense in $L^1(\bbT^P)$. 
\end{theorem}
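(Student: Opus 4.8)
The plan is to prove the theorem exactly in the spirit of Theorem~2.1 of \cite{Pv}: lift the parabolic approximations of the periodic problem to $\R^N$ and pass to the limit, recycling the arguments of Section~\ref{S:4}. Since periodic entropy solutions of \eqref{e5.1}--\eqref{e5.1'} are unique ({\em cf.}\ \cite{GH}), $v$ coincides with the a.s.\ limit in $L_\loc^1((0,T)\X\bbT^P)$ of the smooth solutions $v^\ve$ of the regularized periodic problems; moreover, by the uniform (in $\ve$) bounds---the periodic counterparts of \eqref{e3.12}, of Proposition~\ref{P:3.1}, and of the uniform $L^2((0,T)\X\bbT^P)$--bound for the dissipation $\sum_i(\po_{y_i}\tilde\z_{ik}^\ve-\tilde\z_{ik,y_i}^\ve)$ obtained, as in \eqref{e4.100}, by taking $\eta=\frac12 w^2$ in the periodic entropy inequality---together with convergence in probability, one has $v^\ve\to v$ in $L^1(\Om\X(0,T)\X\bbT^P)$. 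For $z\in\R^P$ set $u_z^\ve(t,x):=v^\ve(t,z+y(x))$ and $u_z(t,x):=v(t,z+y(x))$; as $v^\ve(t,\cdot)\in C^\infty(\bbT^P)$, $u_z^\ve$ is smooth in $x$, almost periodic with $\Sp(u_z^\ve(t,\cdot))\subset\G_\Lambda$, and $\F_t$--adapted. Using the chain rule $\po_{x_\ell}\mapsto\sum_j\l_{j\ell}\po_{y_j}$ together with the very definitions $\tilde f_j=\l_j\cdot\bff$, $\tilde\abf(v)=\tilde\s(v)\tilde\s(v)^\top$ and $\tilde\s_j=\l_j^\top\s$, the function $u_z^\ve$ solves on $\R^N$ the $\ve$--regularized version of \eqref{e1.1} with flux $\bff$, viscosity $\Abf$ and noise functions $g_k^z(x):=h_k(z+y(x))$ (which satisfy \eqref{e1.3-0} with the summable sequence $C_\Lambda\a_k$, $C_\Lambda$ depending only on $\Lambda$), \emph{up to} an artificial-viscosity remainder of order $\ve$ coming from the mismatch between the regularizing matrices $\tilde\abf+\ve I_P$ and $\abf+\ve I_N$ (which are related by the lift only when $\Lambda$ has full rank over $\R$); being smooth, $u_z^\ve$ satisfies classically the $\R^N$--analogue of \eqref{e4.entropy}, with a dissipation term dominating the ``correct'' one of item~(v) of Definition~\ref{D:2.1} and an $O(\ve)$ remainder against any fixed test function.

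The crux is the translation--Fubini device. Since $z\mapsto z+y(x)$ preserves Lebesgue (Haar) measure on $\bbT^P$, for every $g\in L^1((0,T)\X\bbT^P)$ and $R>0$ one has $\int_{\bbT^P}\bigl(\int_0^T\!\!\int_{C_R}|g(t,z+y(x))|\,dx\,dt\bigr)dz=|C_R|\,\|g\|_{L^1((0,T)\X\bbT^P)}$, and likewise with $|g|$ replaced by $|g|^2$. Applying this to $g=v^\ve-v$ and integrating also over $\Om$,
\[
\int_\Om\int_{\bbT^P}\Bigl(\int_0^T\!\!\int_{C_R}|u_z^\ve-u_z|\,dx\,dt\Bigr)\,dz\,d\bbP=|C_R|\,\|v^\ve-v\|_{L^1(\Om\X(0,T)\X\bbT^P)}\longrightarrow0,
\]
so, extracting a subsequence $\ve_m\to0$ (common for all $R\in\N$ by a diagonal argument) and using Fubini in $(\om,z)$, there is a null set $\CN\subset\bbT^P$, \emph{independent of $\om$}, such that for $z\notin\CN$ and $\bbP$--a.e.\ $\om$ (an $\om$--null set depending on $z$), $u_z^{\ve_m}\to u_z$ in $L_\loc^1((0,T)\X\R^N)$. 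The same identity transports the a priori bounds: $N_1(u_z^\ve(t,\cdot))=\|v^\ve(t)\|_{L^1(\bbT^P)}$ is bounded independently of $\ve$ and $z$, and, from the periodic counterparts of the $L^p$-- and $L^2$--bounds and of the dissipation bound above, for a.e.\ $z$ the families $\{u_z^\ve\}$ are uniformly bounded (and equi-integrable in $\om$ for $p>2$) in $L^\infty([0,T];L_{\rho_*}^2(\R^N))$ and $\{\sum_\ell(\po_{x_\ell}\z_{\ell k}(w_z^\ve,\cdot)-\z_{\ell k,x_\ell}(w_z^\ve,\cdot))\}$ is uniformly bounded in $L_\loc^2((0,T)\X\R^N)$; enlarge $\CN$ by these countably many null sets.

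Fix $z\notin\CN$ and a generic $\om$. Passing to the limit $m\to\infty$ in the $\R^N$--entropy inequality satisfied by $u_z^{\ve_m}$ exactly as in the proof of Theorem~\ref{T:4.new}---entropy-flux terms converge by $L_\loc^1$--convergence and equi-integrability, the $O(\ve)$ remainder vanishes, the dissipation converges weakly in $L_\loc^2$, the chain rule (item~(iv)) is established as there, and the $\eta''(w)\sum_k(\cdots)^2$ form is recovered by lower semicontinuity of the $L^2$--norm---we obtain items (iii), (iv), (v) of Definition~\ref{D:2.1} for $u_z$; items (ii) and (i) follow from the transported $\B^1$-- and $L_{\rho_*}^2$--bounds and from the weak formulation (take $\eta$ affine, as in \eqref{e40.3eq}); item~(vi) follows from the periodic initial condition \eqref{e2.D21''}, which via the Fubini identity yields $\int_{C_R}|u_z(t_n,x)-v_0(z+y(x))|\,dx\to0$ along some $t_n\to0+$ for a.e.\ $z$, upgraded to the full limit $t\to0+$ using the weak continuity already obtained and Otto's argument as in Theorem~\ref{T:4.new}. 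Hence, for every $z\notin\CN$, $u_z$ is a $\BAP$--entropy solution of \eqref{e1.1}--\eqref{e1.2} with initial function $v_0(z+y(x))$ and noise functions $h_k(z+y(x))$. Finally, choose once and for all a countable family $\mathcal{T}$ of trigonometric polynomials dense in $L^1(\bbT^P)$ and containing the given $v_0$, run the construction for each $v_0\in\mathcal{T}$, and set $Z:=\R^P\setminus\bigcup_{v_0\in\mathcal{T}}\CN(v_0)$ (a $\Z^P$--periodic set of full measure, independent of $\om$): $Z$ has all the asserted properties.

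\textbf{Main obstacle.} The delicate point is the measure-theoretic bookkeeping in the second step: one must pass to the $L^1(\Om\X(0,T)\X\bbT^P)$--convergence of $v^\ve$ \emph{before} extracting the a.e.-$z$ subsequence, so that the exceptional set lies in the $z$--variable alone and is independent of $\om$, and one must diagonalize over the countably many auxiliary conditions ($R\in\N$, each uniform bound, each $v_0\in\mathcal{T}$) so that a single null set serves them all. A secondary technical nuisance, with no deterministic analogue in \cite{Pv}, is that the lifted approximations solve the exact parabolic regularization of \eqref{e1.1} only when the $\l_j$ are $\R$--independent, so the $O(\ve)$ artificial-viscosity remainder must be carried along through the limit passage---routine, but it forbids quoting the convergence statement of Section~\ref{S:4} as a black box.
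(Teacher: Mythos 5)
Your argument takes a genuinely different route from the paper. The paper never revisits the approximating sequence: following theorem~2.1 of \cite{Pv}, it transfers the entropy inequality satisfied by the limit solution $v$ itself to the composed functions $v(t,z+y(x))$ via a Lebesgue-point argument in $z$, and it secures the $\om$-independence of $Z$ by first integrating the entropy functional against a countable generating family $\{\mathbf{1}_{A_\ell}\}$ of $\F$ (and letting $\eta$, $v_0$, $\varphi$ run over countable dense families), so that the exceptional set is a countable union of $z$-null sets of deterministic functions $I_\ell(v_0,\eta)$. You instead lift the smooth parabolic approximations $v^\ve$ to $\R^N$ and re-run the Section~\ref{S:4} limit passage for a.e.\ fixed $z$, using the translation--Fubini identity on $\bbT^P$ to localize in $z$ after averaging over $\Om$. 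Your route is heavier but self-contained (it does not outsource the transfer to \cite{Pv}); note, however, that it only applies to the solution obtained as the vanishing-viscosity limit, so it implicitly uses the uniqueness of the periodic entropy (kinetic) solution from \cite{GH}, whereas the paper's argument works directly with any periodic entropy solution. Your handling of the $O(\ve)$ mismatch $\ve\sum_j\l_j\otimes\l_j$ versus $\ve I_N$ is legitimate (the extra dissipation has a sign and the extra flux term is $O(\ve)$ against fixed test functions).

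There is one concrete soft spot in the second step. From the transported estimates you only control quantities \emph{integrated} in $z$, e.g.\ $\sup_\ve\int_{\bbT^P}G_\ve(z)\,dz<\infty$ with $G_\ve(z)=\bbE\int_0^T\!\!\int_{C_R}D_\ve(t,z+y(x))\,dx\,dt$ for the dissipation density $D_\ve$, and similarly for the $L^p$- and $L^2_{\rho_*}$-bounds. This does \emph{not} give, as you assert, that for a.e.\ $z$ the family $\{u_z^\ve\}$ is uniformly bounded and the dissipation is uniformly bounded in $L^2_\loc$: a sequence with uniformly bounded $z$-integrals can have $\limsup_\ve G_\ve(z)=\infty$ a.e. Yet these pointwise-in-$z$ uniform bounds are exactly what you need for the weak-$L^2$ compactness of the dissipation (hence the lower-semicontinuity recovery of the term in item (v) of Definition~\ref{D:2.1}) and for the equi-integrability in $\om$ used as in Theorem~\ref{T:4.new}. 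The gap is reparable: by Fatou, $\liminf_m G_{\ve_m}(z)<\infty$ for a.e.\ $z$, so for each fixed $z\notin\CN$ you may pass to a further, $z$-dependent sub-subsequence along which the bounds hold; since the limit $u_z$ has already been identified and the entropy inequality need only be verified once, this extraction is harmless and does not disturb the $\om$-independence of $\CN$. Alternatively, you can keep the $z$-average until the very end and conclude a.e.\ in $z$ by Lebesgue differentiation over countable families of $\varphi$, $\eta$, $v_0$ and events $A_\ell$ --- which is precisely the paper's device. A related, minor inaccuracy: for fixed $z\notin\CN$ you obtain $u_z^{\ve_m}\to u_z$ in $L^1(\Om\X(0,T)\X C_R)$, i.e.\ in probability, not a.s.\ along the same subsequence; this suffices for the limit passage in expectations against $\gamma$, but should be stated as such.
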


\begin{proof} Except for the independence of $Z$ with respect to $\om\in\Om$, the proof is totally similar to the one of theorem~2.1 of \cite{Pv}, and we refer to the latter for the proof of the first part. We assume that $\F$ has a countable  basis and let $\{\gamma_\ell(\om)={\bf1}_{A_\ell},:\, \ell \in\N\}$ where $\{A_\ell\,:\,\ell\in\N\}$ is  a basis for $\F$.  Also, let us assume that $\eta\in{\mathcal E}_0$,  $v_0\in\mathcal{T}$ where ${\mathcal E}_0$ is a countable dense subset of ${\mathcal E}$,
$\mathcal{T}$ is a countable family of trigonometric polynomials dense in $L^1(\bbT^P)$.
Set $J(\om,z,t,x):=\sum_{k\in\N}h_k(z+y(x))\b_k(t,\om)$, $w(\om,z,t,x)=v(\om,t,z+y(x))-J(\om,z,t,x)$ and $u_0^z(x)=v_0(z+y(x))$.  Let $Z_\ell(v_0, \eta)\subset\R^P$ be the set of Lebesgue points $z\in\R^P$ of
\begin{multline}\label{e400.entropy}
I_\ell(v_0, \eta)=\int_\Om \gamma_\ell(\om) \int_0^T\int_{\R^N} 
 \left\{(\eta(w)-\eta(u_0^z))\varphi_t+\sum_{i=1}^N q_i(w,t,x)\po_{x_i}\varphi \right.\\
 \left.+\sum_{i,j=1}^N r_{ij}(w,t,x)\po_{x_ix_j}^2\varphi\right. \\
\left.-\sum_{i=1}^N\left(\eta'(w)b_{i,x_i}(w,t,x)-q_{i,x_i}(w,t,x)\right)\varphi
+\sum_{i,j=1}^Nr_{ij,x_i}(w,t,x)\po_{x_i}\varphi\right\}\,dx\,dt\\
 - \int_0^T\int_{\R^N}\eta''(w)\sum_{k=1}^K\left(\sum_{i=1}^N\left(\po_{x_i}\z_{ik}(w,t,x)-\z_{ik,x_i}(w,t,x)\right)\right)^2\varphi\,dx\,dt.
 \end{multline}
 where  $\varphi$ runs along a countable dense subset of $C_c^\infty([0,T)\X\R^N)$. We then define
$Z(v_0,\eta):=\bigcap_{\ell\in\N} Z_\ell(v_0,\eta)$, $Z=\bigcap_{v_0\in\mathcal{T}, \eta\in{\mathcal E}_0}Z(v_0,\eta)$. We can easily check that $Z$ satisfies the assertion of the theorem. 

\end{proof}
 
Together with Theorem~\ref{T:5.1} the following lemma is also a very important ingredient in  the method of reduction to the periodic case in \cite{Pv}. In the latter, the analogue of \eqref{e5.elem} below is derived from Birkhoff's ergodic theorem. Here we give a different proof which has the advantage to give the validity of the referred equation for all $z_0\in\R^P$.  

\begin{lemma}\label{L:5.elem} If $w\in L^1(\bbT^P)$, $z_0\in\R^P$, $y(x)=(\l_1\cdot x, \cdots,\l_P\cdot x)$,
$x\in\R^N$, then we may define the map $x\mapsto w(z_0+y(x))$ as a function in $\B_*^1(\R^N)$. Moreover, we have for the $\B^1$-norm of  this function
\begin{equation} \label{e5.elem}
\Medint_{\R^N}|w(z_0+y(x))|\,dx=\int_{\bbT^P}|w(y)|\,dy.
\end{equation}
In particular, the mapping $w(y)\mapsto w(z_0+y(x))$ is an isometric isomorphism between $L^1(\bbT^P)$ and $\B_*^1(\R^N)$. 
\end{lemma}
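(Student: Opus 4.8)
The plan is to reduce the statement to a Weyl-type equidistribution fact for the linear flow $x\mapsto z_0+y(x)$ on $\bbT^P$. The central claim to establish is that, for every $F\in C(\bbT^P)$ and \emph{every} $z_0\in\R^P$,
\[
\lim_{R\to\infty}\frac1{R^N}\int_{C_R}F(z_0+y(x))\,dx=\int_{\bbT^P}F(y)\,dy .
\]
By Stone--Weierstrass it is enough to check this for $F(y)=e^{2\pi i\bar k\cdot y}$, $\bar k\in\Z^P$, the general continuous case then following by a routine $3\e$ approximation. For $\bar k=0$ both sides equal $1$; for $\bar k\ne0$ one writes $\bar k\cdot y(x)=\mu_{\bar k}\cdot x$ with $\mu_{\bar k}:=\sum_{j=1}^{P}k_j\l_j$, which is nonzero precisely because $\Lambda$ is linearly independent over $\Z$, so at least one component $\mu_{\bar k,l}$ is nonzero. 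The cube average then factors into one-dimensional integrals, the prefactor $e^{2\pi i\bar k\cdot z_0}$ being an irrelevant unimodular constant, and the factor with index $l$ decays like $\sin(\pi\mu_{\bar k,l}R)/(\pi\mu_{\bar k,l}R)$. I expect this equidistribution step to be the main point, especially the observation that the computation is uniform in $z_0$: this is exactly what upgrades the a.e.\ statement one would obtain from Birkhoff's ergodic theorem (as in the original argument of \cite{Pv}) to one valid for all $z_0\in\R^P$. Everything else is bookkeeping.

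Next I would record that for a trigonometric polynomial $w(y)=\sum_{\bar k\in\mathfrak{F}}a_{\bar k}e^{2\pi i\bar k\cdot y}$, the function $g(x):=w(z_0+y(x))=\sum_{\bar k\in\mathfrak{F}}a_{\bar k}e^{2\pi i\bar k\cdot z_0}e^{2\pi i\mu_{\bar k}\cdot x}$ is, by the injectivity of $\bar k\mapsto\mu_{\bar k}$ on $\Z^P$ (again linear independence of $\Lambda$ over $\Z$), a trigonometric polynomial with $\Sp(g)\subset\G_\Lambda$, hence $g\in\AP_*(\R^N)$. Applying the equidistribution claim to the continuous function $F=|w|$ yields
\[
\Medint_{\R^N}|w(z_0+y(x))|\,dx=\int_{\bbT^P}|w(y)|\,dy ,
\]
so the linear map $T\colon w\mapsto w(z_0+y(\cdot))$ sends the trigonometric polynomials on $\bbT^P$, with the $L^1(\bbT^P)$ norm, isometrically into $\AP_*(\R^N)$ equipped with the seminorm $N_1$.

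Finally I would pass to the completions. Since the trigonometric polynomials are dense in $L^1(\bbT^P)$ and $\AP_*(\R^N)$ is, by definition, dense in $\B_*^1(\R^N)$, the isometry $T$ extends uniquely to an isometry $T\colon L^1(\bbT^P)\to\B_*^1(\R^N)$; this extension \emph{is} the definition of $x\mapsto w(z_0+y(x))$ as an element of $\B_*^1(\R^N)$ for general $w\in L^1(\bbT^P)$, and it is consistent with the pointwise definition when $w$ is continuous because uniform approximation by trigonometric polynomials $w_n\to w$ gives $N_1\big(w_n(z_0+y(\cdot))-w(z_0+y(\cdot))\big)\le\|w_n-w\|_{C(\bbT^P)}\to0$. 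Letting $w_n\to w$ in $L^1(\bbT^P)$ along trigonometric polynomials and passing to the limit in the displayed identity gives \eqref{e5.elem} for all $w\in L^1(\bbT^P)$. For surjectivity one notes that the range of $T$, being the isometric image of a complete space, is closed in $\B_*^1(\R^N)$, while it already contains every exponential $e^{2\pi i\mu\cdot x}$ with $\mu\in\G_\Lambda$ (take $w(y)=e^{2\pi i\bar m\cdot y}$ with $\mu=\sum_j m_j\l_j$ and divide by the unimodular constant $e^{2\pi i\bar m\cdot z_0}$), hence the dense subspace spanned by these exponentials; therefore $T$ is onto, and it is the claimed isometric isomorphism of $L^1(\bbT^P)$ onto $\B_*^1(\R^N)$.
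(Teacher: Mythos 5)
Your proposal is correct and follows essentially the same route as the paper's proof: establish the mean-value identity $\Medint_{\R^N}F(z_0+y(x))\,dx=\int_{\bbT^P}F(y)\,dy$ on the dense class of trigonometric polynomials (the paper asserts this step, while you spell out the Weyl-type computation and the role of the $\Z$-linear independence of $\Lambda$), then extend the resulting isometry to $L^1(\bbT^P)$ by completion and obtain surjectivity from density of the exponentials with frequencies in $\G_\Lambda$. The only cosmetic difference is that you extend directly from trigonometric polynomials via the unique isometric extension, whereas the paper passes first through $C(\bbT^P)$ and checks independence of the approximating sequence; the content is the same.
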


\begin{proof} Consider the elementary trigonometric functions $E_0(y)=1$ and
$$
E_j^\pm(y)=e^{\pm i2\pi y_j}:[-1/2,1/2]^P\to\C,\quad j=1,\cdots,P,
$$ 
which can be viewed as functions on the $P$-dimensional torus $\bbT^P$, by the usual identification 
of $[-1/2,1/2]^P$ with periodic conditions on the boundary and the $P$-dimensional torus.  
We have $E_j^\pm(z_0+y(x))=e^{\pm i2\pi({z_0}_j+ \l_j\cdot x)}$ which clearly belong to $\AP(\R^N)$, since they are indeed periodic with period $(2\pi/(\l_j)_1,\cdots,2\pi/(\l_j)_N)$, $j=1,\cdots,P$. 
Since the (complex valued) continuous periodic functions on $[-1/2,1/2]^P$, or $C(\bbT^P)$,  form a closed algebra generated by the elementary trigonometric functions $E_j^\pm(y)$, $j=0,1,\cdots,P$, and $\AP(\R^N)$ is also a closed algebra, it follows that for any (complex valued) continuous periodic function $F\in C(\bbT^P)$, $F(z_0+y(x))\in\AP(\R^N)$. Observe also that we have, concerning the mean-value of $F(z_0+y(x))$,
$$
\Me(F):=\lim_{R\to\infty}\frac1{R^N}\int_{C_R}F(z_0+y(x))\,dx=\int_{\bbT^P}F(y)\,dy,
$$
since this is true when $F$ is a trigonometric polynomial, that is, when $F$ is a finite linear combination of $E_0(y)$ and trigonometric exponentials of the type $E^{\bar k}(y)=e^{i 2\pi \bar k\cdot y}$, with $\bar k=(k_1,\cdots,k_P)\in\Z^P$,  and these  are dense in 
$C(\bbT^P)$ with respect to the uniform topology. In particular, for any continuous periodic 
$F:\bbT^P\to\C$, the 
$\B^1$-norm of  $F(z_0+y(x))$ verifies
\begin{equation}\label{e5.elem'}
\Medint_{\R^N} |F(z_0+y(x))|\,dx=\int_{\bbT^P}|F(y)|\,dy.
\end{equation}
Since $C(\bbT^P)$ is dense in $L^1(\bbT^P)$, we deduce that, given $w\in L^1(\bbT^P)$, 
we can find a sequence $w_n\in C(\bbT^P)$, $n\in\N$, with $w_n\to w$ in $L^1(\bbT^P)$ and, so,
$w_n(z_0+y(x))$ is a Cauchy sequence in $\B^1(\R^N)$. Therefore, there exists a $g\in\B^1(\R^N)$ such that $w_n(z_0+y(x))\to g$ in $\B^1(\R^N)$. We notice that this function $g$ does not depend on the specific sequence of functions $w_n\in C(\bbT^P)$ converging to $w$ in $L^1(\bbT^P)$. Indeed,
if $\tilde w_n$ is another sequence in $C(\bbT^P)$ with $\tilde w_n\to w$ in $L^1(\bbT^P)$, then, by \eqref{e5.elem'},
 \begin{equation*}
\lim_{n\to\infty}\Medint_{\R^N} |w_n(z_0+y(x))-\tilde w_n(z_0+y(x))|\,dx=\lim_{n\to\infty}\int_{\bbT^P}|w_n(y)-\tilde w_n(y)|\,dy=0,
\end{equation*}
and so $w_n(z_0+y(x))$ and $\tilde w_n(z_0+y(x))$ converge to the same limit in $\B^1(\R^N)$. 
We may denote, without ambiguity, $g(x):= w(z_0+y(x))$. Moreover, since \eqref{e5.elem} holds for $w_n$, it also holds for $w$. 

Finally, concerning the fact that the mapping $w(y)\mapsto w(z_0+y(x))$ is an isometric isomorphism between $L^1(\bbT^P)$ and $\B_*^1(\R^N)$, that this mapping is injective it is clear. The surjectivity follows from the fact that any $g\in \B_*^1(\R^N)$ may be approximated in $\B_*^1(\R^N)$ by trigonometric polynomials in $\AP_*(\R^N)$, $g^n(y(x))$ with $g^n\in C(\bbT^P)$ and $g^n$ converging in $L^1(\bbT^P)$ to some $w\in L^1(\bbT^P)$. This then proves that $g$ may be represented as $w(z_0+y(x))$, which implies the surjectivity of the mapping.

\end{proof}

The following corollary is useful in connection with Theorem~\ref{T:5.1}. 

\begin{corollary} \label{C:newcor} Let $v:\Om\X(0,T)\X\bbT^P\to\R$, be the periodic entropy solution of \eqref{e5.1}-\eqref{e5.1'} with $v_0\in L^1(\bbT^P)$. Let $Z$ be the set of total measure given by Theorem~\ref{T:5.1}. Let $z\in Z$ be  fixed and  $y(x)=(\l_1\cdot x,\cdots, \l_P\cdot x)$. Then,  the function $u(t,x)=v(t,z+y(x))$ is a $\BAP$-entropy solution of \eqref{e1.1}-\eqref{e1.2} with initial function $v_0(z+y(x))$ and noise functions $g_k^z(x)=h_k(z+y(x))$. 
\end{corollary}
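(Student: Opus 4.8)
The plan is to reduce to the case of trigonometric-polynomial initial data, already treated in Theorem~\ref{T:5.1}, by an approximation argument, using the $L^1$-contraction for the periodic problem (Proposition~\ref{P:5.stabper}) together with the isometry of Lemma~\ref{L:5.elem}. Fix trigonometric polynomials $v_0^n\in\mathcal T$ with $v_0^n\to v_0$ in $L^1(\bbT^P)$, and let $v^n$ be the periodic entropy (equivalently, kinetic) solution of \eqref{e5.1}--\eqref{e5.1'} with initial datum $v_0^n$. Since only the initial datum varies, $v^n$ and $v$ carry the same noise coefficients $h_k$, hence the same $\tilde J$, and Proposition~\ref{P:5.stabper} gives, a.s.\ and for a.e.\ $t\in[0,T]$, that $\|v^n(t)-v(t)\|_{L^1(\bbT^P)}\le\|v_0^n-v_0\|_{L^1(\bbT^P)}\to 0$ as $n\to\infty$. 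Because the set $Z$ furnished by Theorem~\ref{T:5.1} is common to the whole countable family $\mathcal T$, for the fixed $z\in Z$ each $u^n(t,x):=v^n(t,z+y(x))$ is a $\BAP$-entropy solution of a problem of the form \eqref{e1.1}--\eqref{e1.2} with initial function $v_0^n(z+y(x))$ and noise functions $g_k^z(x)=h_k(z+y(x))$; in particular the associated $J$ is the same for every $n$.

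Next I would pass to the limit $n\to\infty$. By Lemma~\ref{L:5.elem}, $N_1\bigl(u^n(t,\cdot)-u^m(t,\cdot)\bigr)=\|v^n(t)-v^m(t)\|_{L^1(\bbT^P)}$, so $(u^n)$ is Cauchy in $L^\infty((0,T);\B_*^1(\R^N))$ with limit $u(t,x):=v(t,z+y(x))$, well defined by Lemma~\ref{L:5.elem}. Moreover, a Fubini/change-of-variables argument in the spirit of the proof of Lemma~\ref{L:5.elem} (splitting $\R^N$ along $\ker y$ and its orthogonal complement) yields, for every $R>0$, $\int_{C_R}|v^n(t,z+y(x))-v(t,z+y(x))|\,dx\lesssim_R\|v^n(t)-v(t)\|_{L^1(\bbT^P)}$; hence $u^n\to u$, and therefore $w^n:=u^n-J\to u-J=:w$, in $L^\infty((0,T);L^1_{\loc}(\R^N))$, in particular in $L^1_{\loc}((0,T)\X\R^N)$.

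It then remains to verify that $u$ (i.e.\ $w$) satisfies the six items of Definition~\ref{D:2.1}, which is done exactly as in the corresponding part of the proof of Theorem~\ref{T:3new} (and of Theorem~\ref{T:4.new}): items (i)--(ii) follow from the convergence above and the $n$-uniform $\B^1$- and $L^1_{\loc}$-bounds; for (iii)--(iv) one uses the entropy inequality for $u^n$ with $\eta(w)=\tfrac12 w^2$ (as in \eqref{e4.100}) to bound $\sum_{i=1}^N\bigl(\po_{x_i}\z_{ik}(w^n,t,x)-\z_{ik,x_i}(w^n,t,x)\bigr)$ in $L^2_{\loc}((0,T)\X\R^N)$ uniformly in $n$, extracts a weak limit, and identifies it using that $\z_{ik}(\cdot,t,x)$ and $\z_{ik,x_i}(\cdot,t,x)$ are Lipschitz in their first argument, precisely as the chain rule was passed to the limit in Theorem~\ref{T:4.new}; item (v) passes to the limit because the flux and viscosity terms are Lipschitz, so the linear terms converge by the $L^1_{\loc}$-convergence of $w^n$, while the quadratic dissipation term is controlled by weak-$L^2$ lower semicontinuity, as in \eqref{e4.entropyiv}; and (vi) follows from the initial condition for each $u^n$, the uniform estimate $\int_{C_R}|u^n(t)-u(t)|\,dx\lesssim_R\|v_0^n-v_0\|_{L^1(\bbT^P)}$, and $v_0^n(z+y(\cdot))\to v_0(z+y(\cdot))$ in $L^1(C_R)$, by a standard three-$\varepsilon$ argument. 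The main obstacle is this limit passage — in particular the $n$-uniform $L^2_{\loc}$ control of the parabolic dissipation terms and the identification of the weak limits in the chain rule and in the entropy inequality — but since the $v^n$ already satisfy the full periodic well-posedness theory and the required estimates were established for the almost periodic problem in Section~\ref{S:4}, this reduces to repeating those arguments with $u^n$ in place of the approximate solutions used there, and I would refer to the proof of Theorem~\ref{T:3new} for the details.
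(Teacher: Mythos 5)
Your opening steps (approximation of $v_0$ by trigonometric polynomials in $\mathcal T$, the periodic contraction \eqref{e5.stabper}, the observation that $Z$ is common to the whole family $\mathcal T$ so that each $u^n(t,x)=v^n(t,z+y(x))$ is a $\BAP$-entropy solution, and the Cauchy property in $L^\infty((0,T);\B_*^1(\R^N))$ via the isometry of Lemma~\ref{L:5.elem}) coincide with the paper's argument. The genuine gap is the passage from $\B^1$-convergence to strong $L^1_\loc$-convergence for the \emph{fixed} $z$: the claimed bound $\int_{C_R}|v^n(t,z+y(x))-v(t,z+y(x))|\,dx\lesssim_R\|v^n(t)-v(t)\|_{L^1(\bbT^P)}$ is false in general. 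The change-of-variables argument splitting $\R^N$ along $\ker y$ only works when $y:\R^N\to\R^P$ is surjective, i.e.\ when $\l_1,\dots,\l_P$ are linearly independent over $\R$ (forcing $P\le N$); the hypotheses only require $\Z$-linear independence. For instance $N=1$, $P=2$, $\l_1=1$, $\l_2=\sqrt2$: the set $\{z+y(x)\,:\,x\in C_R\}$ is a segment of a line winding through $\bbT^2$, a Lebesgue-null set, and a function supported in a thin tube around that segment, of small area but height one, has tiny $L^1(\bbT^P)$-norm while its trace on the orbit has integral of order $R$. Hence $\|v^n(t)-v(t)\|_{L^1(\bbT^P)}\to0$ gives no control on compact sets for a fixed $z$ (at best one gets a.e.-in-$z$ subsequential convergence by Fubini, which is not built into your choice of $Z$). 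Since all subsequent steps of your verification of Definition~\ref{D:2.1} — the $n$-uniform $L^2_\loc$ bounds on the dissipation terms, the identification of the chain rule, the limit in the entropy inequality, and the initial condition — rest on this local strong convergence, the limit passage does not go through as written.

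The paper's proof avoids any local limit passage precisely because of this obstruction: it uses the well-posedness theory already established for the almost periodic problem. By Theorem~\ref{T:4.1} together with the $L^1$-mean semi-contraction \eqref{e2.4AP} (i.e.\ the argument around \eqref{e2.4}), the $\BAP$-entropy solutions $u^{\a,z}$ with data $v_0^\a(z+y(\cdot))$ converge in $L^\infty((0,T);\B^1(\R^N))$ to the $\BAP$-entropy solution $u^z$ with datum $v_0(z+y(\cdot))$; on the other hand, by Lemma~\ref{L:5.elem} and \eqref{e5.stabper}, $v^\a(t,z+y(\cdot))\to v(t,z+y(\cdot))$ in $\B^1(\R^N)$, so the two limits coincide and $u^z(t,x)=v(t,z+y(x))$, with the upgrade to $L^1(\Om;L^\infty((0,T);\B^1(\R^N)))$ by dominated convergence using \eqref{e3.new}. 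To repair your route you would either have to invoke this same stability of the solution map in $\B^1$ (thereby reverting to the paper's argument), or redefine $Z$ so as to encode, for the specific approximating sequence, the a.e.-in-$z$ local convergence and local energy bounds you need — neither of which is addressed in the proposal.
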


\begin{proof} Indeed, from the last lemma it follows, if $v_0^\a(y)$ is a sequence of trigonometric polynomials in $\mathcal{T}$ 
approximating $v_0(y)$ in $L^1(\bbT^P)$, then  
\begin{equation} \label{e5.elem''}
\Medint_{\R^N}|v_0^\a(z+y(x))-v_0(z+y(x))|\,dx=\int_{\bbT^P}|v_0^\a(y)-v_0(y)|\,dy,
\end{equation}
and so $v_0^\a(z+y(x))\to v_0(z+y(x))$ in $\B^1(\R^N)$ as $\a\to\infty$.
Therefore, if $u^{\a,z}(t,x)=v^\a(t,z+y(x))$ is the $\BAP$-entropy solution  of \eqref{e1.1}-\eqref{e1.2} with 
$u^{\a,z}(0,x)=v_0^\a(z+y(x))$, according to Theorem~\ref{T:4.1}, and $u^z(t,x)$ is the corresponding solution with 
 initial function $u(0,x)=v_0(z+y(x))$,   using \eqref{e2.4}, we obtain that $u^{\a,z}\to u^z$ in $L^\infty((0,T); \B^1(\R^N))$, as $\a\to\infty$, 
a.s.\ in $\Om$. Again, since by \eqref{e3.new} the norm of the $u^{\a,z}$'s in $L^1(\Om; L^\infty((0,T);\B^1(\R^N)))$  are
uniformly bounded by a function in $L^1(\Om)$, we conclude by dominated convergence that 
\begin{equation}\label{e.newcor}
u^{\a,z}\to u^z \quad \text {in $L^1(\Om;L^\infty((0,T); \B^1(\R^N)))$}.
\end{equation}
Finally, using again  Lemma~\ref{L:5.elem}, we deduce that we must have $u^z(t,x)=v(t, z+y(x))$, where $v(t,y)$ is the entropy solution 
of \eqref{e5.1}-\eqref{e5.1'}.
\end{proof}

\subsection{The limit as $z\to0$}\label{SS:4.1}  In this subsection we consider the limit as $z\to0$ of the 
$\BAP$-entropy solutions $u^z(t,x)=v(t, z+y(x))$ given by Corollary~\ref{C:newcor} and show that they converge to a $\BAP$-solution of \eqref{e1.1}-\eqref{e1.2}. Observe that, since 
$\B^1_*(\R^N)\subset \B^1(\R^N)$ such $\BAP$-entropy solutions belong to $\B_*^1(\R^N)$.
Similarly, if a $L^1(\bbG_{N})$-semigroup solution is the limit in $L^1(\Om;L^\infty((0,T); L^1(\bbG_N)))$ of  $\BAP$-entropy solutions of \eqref{e1.1}-\eqref{e1.2} belonging a.s.\ to $L^\infty((0,T); \B_*^1(\R^N))$, then, a.s., it belongs to  $L^\infty((0,T); L^1(\bbG_{*N}))$. We then, henceforth, call such $L^1(\bbG_N )$-semigroup solutions  
$L^1(\bbG_{*N})$-semigroup solutions of \eqref{e1.1}-\eqref{e1.2}. 

For the discussion in this subsection we assume the non-degeneracy condition \eqref{e5.NDC2}-\eqref{e5.NDC3}, in Section~\ref{S:6}, to assure the improved regularity of the periodic entropy solutions proved in \cite{CPa}.  

\begin{theorem}\label{T:4.2}  Let $z_n\in Z$ be a sequence converging to 0 and let $u^n(t,x)=v(t,z_n+y(x))$ be the $\BAP$-entropy solution given by Corollary~\ref{C:newcor}, where 
$v(t,y)$ is the periodic entropy solution of \eqref{e5.1}-\eqref{e5.1'}, with initial function $v_0\in C(\bbT^P)$. Then, $u^n$ converges in $L^1(\Om;L^1((0,T); L_\loc^1\cap\B^1(\R^N)))$  to a $\BAP$-entropy solution of 
\eqref{e1.1}-\eqref{e1.2} with $u_0(x)=v_0(y(x))$, which then may be represented  as $u(t,x)=v(t,y(x))$. 

As a consequence, let $u_0\in L^1(\bbG_{*N})$, so  $u_0(x)=v_0(y(x))$ for some $v_0\in L^1(\bbT^P)$.   Then $u(t,x)=v(t,y(x))$ is the $L^1(\bbG_{*N})$-semigroup solution of \eqref{e1.1}-\eqref{e1.2}  where 
$v(t,y)$ is the periodic entropy solution of \eqref{e5.1}-\eqref{e5.1'}, with initial function $v_0$. 
\end{theorem}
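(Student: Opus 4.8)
The plan is to prove convergence of the sequence $u^n(t,x)=v(t,z_n+y(x))$ as $z_n\to0$ by exhibiting it as a Cauchy sequence in $L^1(\Om;L^1((0,T);L_\loc^1\cap\B^1(\R^N)))$, identifying the limit, and finally checking that the limit is a $\BAP$-entropy solution. First I would record the two structural facts that make this work: (a) by Lemma~\ref{L:5.elem}, for each fixed $z\in\R^P$ the map $w\mapsto w(z+y(x))$ is an isometry of $L^1(\bbT^P)$ onto $\B_*^1(\R^N)$, so in particular $N_1(u^n(t,\cdot)-u^m(t,\cdot))$ only depends on the $L^1(\bbT^P)$-distance of the periodic profiles \emph{after a translation}; and (b) by Proposition~\ref{P:5.stabper} the periodic kinetic/entropy solution satisfies the $L^1(\bbT^P)$-contraction, a.s., so that $\|v(t,z_n+\cdot)-v(t,z_m+\cdot)\|_{L^1(\bbT^P)}\le\|v_0(z_n+\cdot)-v_0(z_m+\cdot)\|_{L^1(\bbT^P)}$ for a.e.\ $t$. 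Since $v_0\in C(\bbT^P)$ is uniformly continuous, the right-hand side tends to $0$ as $n,m\to\infty$, uniformly in $t$ and independently of $\om$. Hence $u^n$ is a.s.\ Cauchy in $L^\infty((0,T);\B_*^1(\R^N))$, and, since the bound is uniform in $\om$ (bounded by a constant times the modulus of continuity of $v_0$), dominated convergence upgrades this to convergence in $L^1(\Om;L^\infty((0,T);\B^1(\R^N)))$.

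Next I would pin down the limit. Denote by $u$ the $\B^1$-limit just obtained. On the one hand, for each $n$, $u^n(t,x)=v(t,z_n+y(x))$ and, by the same isometry argument applied pointwise in $t$, $u^n(t,\cdot)\to v(t,\cdot+y(x))\big|_{\text{shift }0}$; more precisely, since $z_n\to0$ and $v(t,\cdot)\in L^1(\bbT^P)$ with $v(t,z_n+\cdot)\to v(t,\cdot)$ in $L^1(\bbT^P)$ (continuity of translations in $L^1$ of the torus), Lemma~\ref{L:5.elem} gives $u^n(t,\cdot)\to v(t,y(\cdot))$ in $\B_*^1(\R^N)$ for a.e.\ $t$. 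Therefore $u(t,x)=v(t,y(x))$ a.e., which identifies the limit unambiguously. Simultaneously I would note that the $L_\loc^1$-convergence is automatic from the $\B^1$-convergence composed with the local integrability bound: each $u^n$ is uniformly bounded in $L^\infty((0,T);L_{\rho_*}^2(\R^N))$ (via the translation of the periodic bound, or directly from Proposition~\ref{P:3.1} transported through the reduction), so on any ball $L^1$-convergence follows by Vitali.

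Then I would verify that $u(t,x)=v(t,y(x))$ is a $\BAP$-entropy solution in the sense of Definition~\ref{D:2.1}. For items (iii)--(v) this is the passage to the limit in the entropy inequality: by Corollary~\ref{C:newcor} each $u^n$ satisfies \eqref{e2.entropy} (equivalently \eqref{e400.entropy}) with the correct triples and the correct coefficients $g_k^{z_n}(x)=h_k(z_n+y(x))$; since $h_k,\nabla h_k,\nabla^2 h_k$ are continuous on $\bbT^P$ we have $g_k^{z_n}\to g_k^0$ and $J^{z_n}\to J$ with all derivatives uniformly, and the dissipation terms are handled exactly as in the proof of Theorem~\ref{T:4.new}: the vector fields $\sum_i(\po_{x_i}\z_{ik}^{\eta''(\cdot)^{1/2}}(w^n,t,x)-\z_{ik,x_i}^{\eta''(\cdot)^{1/2}}(w^n,t,x))$ are bounded in $L_\loc^2$ (using the improved regularity of periodic solutions from \cite{CPa}, available under the non-degeneracy hypothesis \eqref{e5.NDC2}--\eqref{e5.NDC3} invoked in the statement), converge distributionally, and lower semicontinuity of the $L^2$-norm gives the correct inequality; the chain rule is obtained just as in Theorem~\ref{T:4.new}. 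Items (i), (ii), (vi) follow from the integral/weak formulation (obtained by taking $\eta(w)=\pm w$, exactly as in \eqref{e4.entropyiv'} and the paragraphs following it in the proof of Theorem~\ref{T:4.new}) together with the uniform $\B^1$- and $L_{\rho_*}^2$-bounds. Finally, for the ``as a consequence'' part: given $u_0\in L^1(\bbG_{*N})$, write $u_0(x)=v_0(y(x))$ with $v_0\in L^1(\bbT^P)$, approximate $v_0$ by continuous profiles, apply the first part to each, and use the $L^1(\bbG_N)$-contraction \eqref{e4.semicont} (with $C(T)\equiv1$ in this separable subspace by the reduction to the periodic case) to pass to the limit; the resulting $u(t,x)=v(t,y(x))$ is by construction a limit in $L^1(\Om;L^\infty((0,T);L^1(\bbG_N)))$ of $\BAP$-entropy solutions, hence is \emph{the} $L^1(\bbG_{*N})$-semigroup solution.

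The main obstacle I expect is not the Cauchy estimate itself, which is almost immediate from the torus contraction plus the isometry of Lemma~\ref{L:5.elem}, but rather making the passage to the limit in the dissipation (entropy-defect) term rigorous uniformly in the shift $z_n$: one must ensure that the weak compactness of $\sum_i(\po_{x_i}\z_{ik}^{\ve}-\z_{ik,x_i}^{\ve})$-type quantities and the identification of their distributional limits go through with the shifted coefficients, which is exactly where the regularity estimate of \cite{CPa} under \eqref{e5.NDC2}--\eqref{e5.NDC3} (rather than merely \eqref{e3.nondeg1}) is needed, and where one has to be careful that the translation $z_n\mapsto z_n+y(x)$ does not destroy the uniform-in-$n$ local $L^2$ bounds — it does not, precisely because the bound is on the periodic profile $v$ and translation is an $L^1(\bbT^P)$- and $L^2(\bbT^P)$-isometry.
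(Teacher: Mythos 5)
Your identification of the limit (continuity of translations in $L^1(\bbT^P)$ plus the isometry of Lemma~\ref{L:5.elem}, then dominated convergence in $\om$) and your treatment of the final ``as a consequence'' part (approximate $v_0\in L^1(\bbT^P)$ by continuous profiles and pass to the limit as in Theorem~\ref{T:3new}) coincide with what the paper does. But two steps of your argument do not hold as written. First, your Cauchy estimate invokes Proposition~\ref{P:5.stabper} to compare $v(t,z_n+\cdot)$ with $v(t,z_m+\cdot)$; these are \emph{not} two kinetic solutions of the same equation \eqref{e5.1}, because translating the spatial variable also translates the noise coefficients, so $v(t,z_n+\cdot)$ solves the problem with noise $h_k(z_n+\cdot)$ while $v(t,z_m+\cdot)$ solves the one with $h_k(z_m+\cdot)$. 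The $L^1(\bbT^P)$-contraction is stated for a common noise and does not give $\|v(t,z_n+\cdot)-v(t,z_m+\cdot)\|_{L^1(\bbT^P)}\le\|v_0(z_n+\cdot)-v_0(z_m+\cdot)\|_{L^1(\bbT^P)}$; any comparison of solutions with different $h_k$'s produces extra terms. This particular step is dispensable (your second paragraph already yields convergence in $L^1(\Om;L^1((0,T);\B^1(\R^N)))$, which is the mode of convergence asserted in the theorem), but it should not be presented as a consequence of the periodic contraction.

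The more serious gap is the claim that the $L^1_\loc$ part of the convergence is ``automatic from the $\B^1$-convergence \dots by Vitali.'' Convergence in the Besicovitch seminorm $N_1$ gives no information whatsoever on any fixed bounded set: a compactly supported perturbation has $N_1$-seminorm zero, and Vitali's theorem needs local convergence in measure, which you have not produced. Moreover, for $v(t,\cdot)$ merely in $L^1(\bbT^P)$ the composition $v(t,z+y(x))$ is not even defined pointwise (the orbit $\{z+y(x)\}$ is a null set in $\bbT^P$ when $P$ exceeds the rank of $(\l_1,\dots,\l_P)$), which is precisely why Theorem~\ref{T:5.1} only gives a.e.\ $z$ and why the limit $z_n\to0$ is delicate. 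This local strong convergence is also exactly what your third paragraph needs in order to pass to the limit pathwise in the nonlinear terms $\eta(w^n)$, $q_i(w^n,t,x)$, $r_{ij}(w^n,t,x)$ of the entropy inequality, so it cannot be bypassed. The paper obtains it by rerunning the entire compactness machinery of Section~\ref{S:4} simultaneously for $u^n$ and for the periodic sequence $v^n(t,y)=v(t,z_n+y)$: uniform local $W^{s,r}$ regularity and $C^\l$-in-time bounds in negative Sobolev spaces, tightness in the combined space ${\mathcal X}=L^1((0,T);L_\loc^1\cap\B_*^1(\R^N))\cap C([0,T];W_\loc^{-2,r}\cap{\mathcal W}_*^{-2,r}(\R^N))$ with $K_R=K_R^u\cap K_R^v$ (transferring the periodic regularity to $u^n$ through the isometry ${\mathfrak Y}$), then Skorokhod representation and the Gy\"ongy--Krylov argument, and only afterwards the translation-continuity computation to identify the limit as $v(t,y(x))$. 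Your proposal omits this compactness step, so as it stands it establishes neither the $L_\loc^1$ convergence claimed in the statement nor the pathwise strong convergence required to justify the limit in the entropy inequality.
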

  
  \begin{proof} {\em Step \#1.} Let us denote by ${\mathfrak Y}$ the mapping $v(y)\mapsto v(y(x))$ from $L^1(\bbT^P)\to \B_*^1(\R^N)$. By Lamma~\ref{L:5.elem}, ${\mathfrak Y}$ is an isometric isomorphism. For $s\in\R$, $q\ge1$, let us define ${\mathcal W}_*^{s,q}(\R^N):={\mathfrak Y}[W^{s,q}(\bbT^P)]$, and
  $$
  \|v(y(\cdot))\|_{{\mathcal W}_*^{s,q}(\R^N)}=\|v\|_{W^{s,q}(\bbT^P)}.
  $$
 The first part of the statement  is proved following the same steps as the proof of the existence of a  $\BAP$-entropy solution of \eqref{e1.1}-\eqref{e1.2} as the limit of a vanishing viscosity sequence of solutions to the parabolic approximation as it was done in Section~\ref{S:4}, with the following adaptations. Now, besides the sequence $u^n(t,x)$, we also consider the sequence $v^n(t,y):=v(t,z_n+y)$. Recall that $v^n(t,y)$ is the periodic entropy solution  of \eqref{e5.1}-\eqref{e5.1'} with initial function $v_0(z_n+y)$ and noise functions $h_k(z_n+y)$, $k\in\N$. We can proceed with the above mentioned compactness method along the usual steps, Kolmogorov's continuity, Prohorov's theorem, Skorokhod's representation theorem, etc., corresponding to Propositions~\ref{P:3.2'}, \ref{P:3.2''}, \ref{P:3.2'''}, etc., simultaneously for both $u^n$ and $v^n$. While the steps for the sequence $u^n$ are similar to those for the vanishing viscosity sequence, the same is true for the sequence $v^n$. We combine both procedures transferring the regularity results for $v^n$ over to $u^n$ through the map ${\mathfrak Y}$. 
 
 {\em Step \#2.} Thus, combining the corresponding Proposition~\ref{P:3.2'} for $u^n$ and $v^n$, we get $u^n\in C^\l([0,T]; W_\loc^{-2,r}\cap {\mathcal W}_*^{-2,r}(\R^N))$. Concerning the results corresponding to Proposition~\ref{P:3.2''} for both $u^n$ and $v^n$, they can be combined by defining  
$$
 {\mathcal X}:=L^1((0,T);L_\loc^1\cap\B_*^1(\R^N))
 \bigcap C([0,T]; W_\loc^{-2,r}\cap {\mathcal W}_*^{-2,r}(\R^N)).
 $$    
  In the proof of the tightness corresponding to Proposition~\ref{P:3.2''}, tranferring the regularity of $v^n$ to $u^n$, we can now define $K_R=K_R^u\cap K_R^v$, where $K_R^u$ is as $K_R$ in the proof of Proposition~\ref{P:3.2''} and 
  \begin{multline*}
  K_R^v:= \{u\in {\mathcal X}\,:\, \|u\|_{C^\l([0,T];{\mathcal W}_*^{-2,r}(\R^N))}\le R,
\,  \|u\|_{L^1((0,T);{\mathcal W}_*^{s,r}(\R^N))}\le R,\\
  \|u\|_{L^\infty((0,T);\B_*^2(\R^N))}\le R\}.
  \end{multline*}
  The procedures to prove the tightness of the laws of $u^n$ in ${\mathcal X}$ are then totally similar to those in the proof of Proposition~\ref{P:3.2''}. Then Proposition~\ref{P:3.2'''} and the subsequent content of Section~\ref{S:4} may be repeated with no change, and this way we conclude that the sequence $u^n$ converges in $L^1(\Om;L^1((0,T);L_\loc^1\cap \B_*^1(\R^N)))$ to the $\BAP$-entropy solution of \eqref{e1.1}-\eqref{e1.2}, with $u_0(x)=v_0(y(x))$, and by Lemma~\ref{L:5.elem} it may be represented as $u(t,x)=v(t,y(x))$.  Indeed, by Lemma~\ref{L:5.elem} we deduce that $v_0(z+y(x))\to v_0(y(x))$, as $z\to0$,  in $\B^1(\R^N)$. Moreover, using again Lemma~\ref{L:5.elem},  we have
  \begin{multline*}
 \bbE\int_0^T\!\!\!\Medint_{\R^N}|u^n(t,x)-v(t,y(x))|\,dx\,dt
 \\=\bbE\int_0^T\!\!\!\Medint_{\R^N}|v(t,z_n+y(x))-v(t,y(x))|\,dx\,dt\\
 =\bbE\int_0^T\int_{\bbT^P}|v(t,z_n+y)-v(t,y)|\,dy\,dt\to0,\quad\text{as $z_n\to0$},
 \end{multline*}
 where we also use the continuity of translations in $L^1(\bbT^P)$. Therefore, $u^n(t,x)\to v(t,y(x))$ 
 in $L^1(\Om\X[0,T]\X\bbG_N)$, and so $v(t,y(x))$ is the $\BAP$-entropy solution $u(t,x)$ of \eqref{e1.1}-\eqref{e1.2} with $u_0(x)=v_0(y(x))$.
 
 {\em Step \#3.} Concerning the final part of the statement, it is proved as follows. When $u_0\in \AP_*(\R^N)$, by Lemma~\ref{L:5.elem} and its proof, $u_0(x)=v_0(y(x))$, for some $v_0\in C(\bbT^P)$, and so by the first part of the statement, $u(t,x)=v(t,y(x))$ is a $\BAP$-entropy solution of \eqref{e1.1}-\eqref{e1.2}. On the other hand, if $u_0\in L^1(\bbG_{*N})$, by Lemma~\ref{L:5.elem}, $u_0(x)=v_0(y(x))$, for some $v_0\in L^1(\bbT^P)$, and, if $v_0^n\in C(\bbT^P)$ is a sequence of continuous functions on the torus converging to $v_0$ in $L^1(\bbT^P)$,
  then, as in the proof of Theorem\ref{T:3new},  the $\BAP$-entropy solutions with initial functions $u_0^n(x)=v_0^n(y(x))$, $u^n(t,x)=v^n(t,y(x))$,  converge in $L^1(\Om;L^\infty((0,T);L^1(\bbG_{*N})))$ to a 
  $L^1(\bbG_{*N})$-semigroup solution of \eqref{e1.1}-\eqref{e1.2}, which can be represented as 
  $u(t,x)=v(t,y(x))$.  
       
  \end{proof}
  
  As a consequence of Theorem~\ref{T:4.2} we have the following result establishing the contraction property of the $L^1(\bbG_{*N})$-semigroup solutions.
  
  \begin{proposition}[$L^1$-mean contraction property]~\label{P:5.new}    Let 
 $ u_1(t,x), u_2(t,x)$  be two $L^1(\bbG_{*N})$-semigroup solutions of \eqref{e1.1}-\eqref{e1.2} with  initial data $u_{01}, u_{02}\in L^1(\bbG_{*N})$. 
 Then,  a.s.,  for a.e.\ $t>0$, 
 \begin{equation}\label{e5.4AP'} 
 \int_{\bbG_N}|u_1(t)- u_2(t)|\,d\mm \le \, \int_{\bbG_N}|u_{01}-u_{02}|\,d\mm.
 \end{equation}
 \end{proposition}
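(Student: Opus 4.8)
The plan is to pull the inequality back to the periodic torus $\bbT^P$, where the genuine $L^1$-contraction is already available, using the reduction machinery of Section~\ref{S:5}. First I would apply Lemma~\ref{L:5.elem} with $z_0=0$: since $u_{0i}\in L^1(\bbG_{*N})\sim\B_*^1(\R^N)$, there exist $v_{0i}\in L^1(\bbT^P)$ with $u_{0i}(x)=v_{0i}(y(x))$, $i=1,2$, where $y(x)=(\l_1\cdot x,\cdots,\l_P\cdot x)$. Then, by the concluding assertion of Theorem~\ref{T:4.2}, the two $L^1(\bbG_{*N})$-semigroup solutions are represented as $u_i(t,x)=v_i(t,y(x))$, where $v_i$ is the periodic entropy solution of \eqref{e5.1}-\eqref{e5.1'} with initial datum $v_{0i}$; here I use that the $L^1(\bbG_{*N})$-semigroup solution associated to a prescribed initial datum is uniquely determined (an immediate consequence of Proposition~\ref{P:4.1} applied with equal data), so these representations are forced for the given $u_1,u_2$.

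Second, I would invoke the periodic contraction. As noted right after Definition~\ref{D:4.2P}, the periodic entropy solution coincides with the periodic kinetic solution of \cite{GH}, both being the a.s.\ limit in $L_\loc^1((0,T)\X\R^P)$ of the regularized parabolic approximations; hence Proposition~\ref{P:5.stabper} applies to the pair $v_1,v_2$ and gives, a.s.\ and for a.e.\ $t\in[0,T]$,
\begin{equation*}
\int_{\bbT^P}|v_1(t,y)-v_2(t,y)|\,dy\le\int_{\bbT^P}|v_{01}(y)-v_{02}(y)|\,dy.
\end{equation*}

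Third, I would transport this back through the isometry. For a.e.\ $t$ one has $u_1(t,x)-u_2(t,x)=\bigl(v_1(t,\cdot)-v_2(t,\cdot)\bigr)(y(x))\in\B_*^1(\R^N)$, and likewise $u_{01}-u_{02}=(v_{01}-v_{02})(y(\cdot))$; applying the identity \eqref{e5.elem} of Lemma~\ref{L:5.elem} (again with $z_0=0$) to $w=v_1(t,\cdot)-v_2(t,\cdot)$ and to $w=v_{01}-v_{02}$ turns the periodic inequality above into $N_1\bigl(u_1(t,\cdot)-u_2(t,\cdot)\bigr)\le N_1(u_{01}-u_{02})$, valid a.s.\ and for a.e.\ $t>0$. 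By Remark~\ref{R:2.1}, $N_1(\cdot)$ is the $L^1(\bbG_N)$-norm, so this is exactly \eqref{e5.4AP'}.

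I do not anticipate any substantial obstacle: all the weight has already been carried by the reduction-to-the-periodic-case results (Theorems~\ref{T:5.1} and \ref{T:4.2}) and by the periodic well-posedness and contraction theory borrowed from \cite{GH} and \cite{DHV}. The only mild point deserving a word is the management of exceptional sets --- the representation $u_i=v_i(t,y(x))$ holds $\bbP\otimes dt$-a.e.\ and the periodic contraction holds a.s.\ for a.e.\ $t$ --- but the intersection of these full-measure sets is still of full measure, so the final inequality holds in the stated a.s., a.e.-$t$ sense.
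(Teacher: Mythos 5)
Your proposal is correct and is essentially the paper's own argument: the paper likewise uses the representation $u_i(t,x)=v_i(t,y(x))$ from Theorem~\ref{T:4.2}, the periodic contraction of Proposition~\ref{P:5.stabper}, and the isometric isomorphism of Lemma~\ref{L:5.elem} to transfer the inequality back to $L^1(\bbG_{*N})$. Your additional remarks (uniqueness of the semigroup solution via Proposition~\ref{P:4.1} and the bookkeeping of exceptional sets) only make explicit what the paper leaves implicit.
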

 
 \begin{proof} Using the isometric isomorphism ${\mathfrak Y}: L^1(\bbT^N)\to L^1(\bbG_{*N})$, $v(y)\mapsto v(y(x))$, since, by Theorem~\ref{T:4.2}, $u_1(t,x)=v_1(t,y(x))$, $u_2(t,x)=v_2(t,y(x))$, where 
 $v_1,v_2$ are the periodic entropy solutions with initial data $v_{01}, v_{02}$, such that ${\mathfrak Y}v_{0i}=u_{0i}$, $i=1,2$, \eqref{e5.4AP'} follows immediately from the contraction property for periodic entropy solutions \eqref{e5.stabper}.
 
 \end{proof}

\section{Asymptotic Behavior}\label{S:6}

In this section we study the asymptotic behavior of the $L^1(\bbG_{*N})$-semigroup solution obtained in the last section.  Thus, we keep considering  the algebra
generated by $\{e^{\pm 2\pi i\l_\ell\cdot x}\,:\, \ell=0,1,2,\cdots, P\}$, with $\l_\ell\in\R^N$, $\l_0=0$,   where $\Lambda=\{\l_1, \l_2, \cdots,\l_P\}$ is a $\Z$-linearly independent set in $\R^N$, and we keep  denoting the closure of  this algebra  in 
the $\sup$-norm by $\AP_*(\R^N)$. For any $g\in\AP_*(\R^N)$, we have $\Sp(g)\in\G_\Lambda$, where the latter is the smallest additive group containing $\Lambda$. We also keep assuming, as in the last section,  that the noise functions satisfy $g_k\in\AP_*(\R^N)$, $k\in\N$. For $y(x)=(\l_1\cdot x,\cdots, \l_P\cdot x)$ we have that $g_k(x)=h_k(y(x))$, where $h_k(y)\in C(\bbT^P)$, $k\in\N$.

From \eqref{e5.4AP} we can define   the transition semigroup in $L^1(\bbG_{*N})$ associated with \eqref{e1.1}:
$$
P_t\phi(u_0)=\bbE(\phi(u(t))),\quad \phi\in \mathcal{B}_b(L^1(\bbG_{*N})),
$$
where $u(t)$ denotes the $L^1(\bbG_{*N})$-semigroup solution with initial data $u_0$ at time $t$, which, to be more precise, we will henceforth denote $u^{u_0}(t)$, and $\mathcal{B}_b(L^1(\bbG_{*N}))$ denotes the bounded Borel function on $L^1(\bbG_{*N})$.  We keep the notation and assumptions of Section~\ref{S:5}. 
 
A probability measure $\mu$ on $L^1(\bbG_{*N})$ is a said to be an invariant measure for $(P_t)$ if we have
\begin{equation*}
P_t^*\mu =\mu,\quad t\ge0,\quad
\text{where $\la P_t^*\mu, \phi\ra=\la\mu, P_t\phi\ra$, for all $\phi\in C(L^1(\bbG_{*N}))$.}
\end{equation*}
It can be easily checked that $P_t(u_0,\Gamma):=P_t\chi_\Gamma(u_0)$, $u_0\in L^1(\bbG_{*N})$, 
$\Gamma\in\mathcal{E}:=\mathcal{B}_b(L^1(\bbG_{*N}))$,  defines a Markovian transition function.

 Recalling the definition of ${\mathcal W}^{s,q}(\R^N)$ in Section~\ref{S:5}, let  $S\subset L^1(\bbG_{*N})$ be defined by
$$
S=\{u\in {\mathcal W}_*^{s,q}(\R^N)\,:\, \int_{\bbG_{*N}}u(x)\,dx=0\},
$$
where $W^{s,q}(\bbT^P)$ is the Sobolev space such that the kinetic periodic solutions obtained in \cite{GH} with initial data in $L^3(\bbT^P)$ belong to $L^1((0,T); W^{s,q}(\bbT))$,
according to \cite{CPa}.
More specifically, we also recall the decisive estimate  (4.21) from \cite{CPa},  
  for the kinetic periodic solution on
 $\bbT^P$,
 \begin{equation}\label{e5.DV}
 \bbE\|v\|_{L^1((0,T);W^{s,q}(\bbT^P))}
 \le k_0(\bbE\|v_0\|_{L^3(\bbT^P)}^3+1+T),
 \end{equation}
 for some $q>1$, where $k_0$ depends only on the data of the periodic problem, provided the non-degeneracy condition \eqref{e5.NDC3}, with \eqref{e5.NDC2}, recalled below, holds.
 
We then define, 
$$
\|u\|_S:=\|u\|_{{\mathcal W}_*^{s,q}(\R^N)}.
$$
We notice that $S$ is a subspace of $L^1(\bbG_{*N})$ and $\|\cdot\|_S$ is a norm. Indeed,  since $W^{s,p}(\bbT^P)$ is continuously embedded in $L^1(\bbT^P)$, we have that if $\|u\|_S=0$, then $v=0$ in $L^1(\bbT^P)$, which, in turn,  by Lemma~\ref{L:5.elem}, implies that $u=0$ in $L^1(\bbG_{*N})$. The other properties for a norm are obviously checked. Thus, $\|\cdot\|_S$ is a norm in $S$.

Let $S_R:=\{u\in S\,:\, \|u\|_S\le R\}$. We claim that $S_R$ is compact in $L^1(\bbG_{*N})$. Indeed,
given a sequence $u_\a\in S_R$, we can find $v_\a(y(x))$, with $\|v_\a\|_{W^{s,p}(\bbT^P)}\le R$ 
and $u_\a=v_\a(y(x))$ in $L^1(\bbG_{*N})$. By the compactness of the embedding 
$W^{s,p}(\bbT^P)\subset L^1(\bbT^N)$, we may find a subsequence $v_{\a_k}$ converging in $L^1(\bbT^P)$. Then,  by Lemma~\ref{L:5.elem}, $u_{\a_k}(x)=v_{\a_k}(y(x))$ converges in $L^1(\bbG_{*N})$  to  certain $u\in S_R$, which proves the compactness of $S_R$.

Let us define the probability measures 
$$
\la \mu_T,\phi \ra :=\frac1T\int_0^T P_t\phi (u_0) dt =\frac1T\int_0^T \la P_t^*\d_{u_0},\phi\ra dt, \quad \phi\in \B_b(L^1(\bbG_{*N})),
$$
where, for a Banach space $E$,  $\B_b(E)$ is the space of bounded Borel functions on $E$. 
  We next prove that the family of probability measures over $L^1(\bbG_{*N})$,  $\{\mu_T\}_{T>0}$, is tight, aiming to apply Prohorov's theorem (see, e.g., \cite{Bi}).
  \begin{proposition}\label{existence-m}
The family  $\{\mu_T\}_{T>0}$  of measures over $\B_b(L^1(\bbG_{*N}))$ is tight and relatively weakly compact. Hence, there is a subsequence $\mu_{T_k}$ and $\mu\in \M_1(L^1(\bbG_{*N}))$ such that $\mu_{T_k}\wto \mu$. 
\end{proposition}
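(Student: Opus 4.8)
The plan is the standard Krylov–Bogoliubov argument: we show the family $\{\mu_T\}_{T>0}$ is tight by producing a uniform-in-$T$ bound on the expected $S$-norm of the solution, then invoke the compactness of the balls $S_R$ in $L^1(\bbG_{*N})$ established just above, and finally apply Prohorov's theorem. First I would fix an initial datum $u_0\in S$ (which by Lemma~\ref{L:5.elem} corresponds to $v_0\in W^{s,q}(\bbT^P)\subset L^3(\bbT^P)$ with zero mean), and recall that by Theorem~\ref{T:4.2} the $L^1(\bbG_{*N})$-semigroup solution $u^{u_0}(t)$ is represented as $v(t,y(x))$ where $v$ is the periodic kinetic solution of \eqref{e5.1}-\eqref{e5.1'}. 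The key regularity input is the estimate \eqref{e5.DV} from \cite{CPa}, valid under the non-degeneracy assumption \eqref{e5.NDC} (via \eqref{e5.NDC2}-\eqref{e5.NDC3}), which gives
$$
\bbE\|v\|_{L^1((0,T);W^{s,q}(\bbT^P))}\le k_0\bigl(\bbE\|v_0\|_{L^3(\bbT^P)}^3+1+T\bigr).
$$
Since the noise has zero spatial mean, the mean value of $v(t,\cdot)$ is preserved (it satisfies a closed ODE driven by the mean of the noise, which is zero), so $v(t,\cdot)$ stays in the zero-mean subspace; equivalently, after translating $v$ by $z\in Z$ and using Lemma~\ref{L:5.elem}, the corresponding $u^{u_0}(t)$ lies in $S$ a.s.\ for a.e.\ $t$, with $\|u^{u_0}(t)\|_S=\|v(t,\cdot)\|_{W^{s,q}(\bbT^P)}$ by the definition of the norm on ${\mathcal W}_*^{s,q}(\R^N)$.

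Next I would estimate $\mu_T(L^1(\bbG_{*N})\setminus S_R)$. By definition $\la\mu_T,\phi\ra=\frac1T\int_0^T\bbE\phi(u^{u_0}(t))\,dt$; taking $\phi$ to be (an approximation of) the indicator of the complement of $S_R$ and using Chebyshev's inequality in the form $\mathbf 1_{\|u\|_S>R}\le R^{-1}\|u\|_S$ (interpreting $\|u\|_S=+\infty$ when $u\notin S$, which happens only on a null set in $t$), we get
$$
\mu_T\bigl(L^1(\bbG_{*N})\setminus S_R\bigr)\le \frac1{RT}\int_0^T\bbE\|u^{u_0}(t)\|_S\,dt
=\frac1{RT}\,\bbE\|v\|_{L^1((0,T);W^{s,q}(\bbT^P))}\le \frac{k_0}{R}\Bigl(\frac{\bbE\|v_0\|_{L^3(\bbT^P)}^3+1}{T}+1\Bigr).
$$
For $T\ge 1$ the right-hand side is bounded by $\tfrac{C}{R}$ with $C=k_0(\bbE\|v_0\|_{L^3}^3+2)$ independent of $T$; for the finitely many $T\in(0,1)$ one can either enlarge $R$ or simply note each individual $\mu_T$ is tight (it is a law on a Polish space). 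Hence, given $\varepsilon>0$, choosing $R=R(\varepsilon)$ large enough yields $\mu_T(L^1(\bbG_{*N})\setminus S_R)<\varepsilon$ for all $T>0$. Since $S_R$ is compact in $L^1(\bbG_{*N})$ by the claim established before the proposition, the family $\{\mu_T\}_{T>0}$ is tight.

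Finally, tightness on the Polish space $L^1(\bbG_{*N})$ together with Prohorov's theorem (see, e.g., \cite{Bi}) gives relative weak compactness: there is a sequence $T_k\to\infty$ and a probability measure $\mu\in\M_1(L^1(\bbG_{*N}))$ with $\mu_{T_k}\wto\mu$. I expect the only genuine subtlety — and hence the main point to be careful about — is justifying that $u^{u_0}(t)$ actually takes values in $S$ for a.e.\ $t$ with the stated norm bound: this requires combining the representation $u^{u_0}(t,x)=v(t,y(x))$ from Theorem~\ref{T:4.2}, the isometry of Lemma~\ref{L:5.elem} extended to the Sobolev scale via the map ${\mathfrak Y}$, the conservation of the (zero) spatial mean under the zero-mean-noise hypothesis, and the integrability estimate \eqref{e5.DV} of \cite{CPa}, which itself relies on the non-degeneracy condition \eqref{e5.NDC}. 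Everything else is the routine Chebyshev-plus-Prohorov mechanism.
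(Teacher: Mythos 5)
Your proposal follows essentially the same route as the paper: a Chebyshev-type bound $\mu_T(L^1(\bbG_{*N})\setminus S_R)\lesssim R^{-1}\,\bbE\|v\|_{L^1((0,T);W^{s,q}(\bbT^P))}/T$ using the representation $u(t,x)=v(t,y(x))$, the isometry ${\mathfrak Y}$, and the regularity estimate \eqref{e5.DV}, followed by compactness of $S_R$ in $L^1(\bbG_{*N})$ and Prohorov's theorem, with your remarks on zero-mean preservation and small $T$ being harmless extra detail. The only deviation is your choice of initial datum: you take $u_0\in S$ and assert $W^{s,q}(\bbT^P)\subset L^3(\bbT^P)$, which is not justified (and not needed) — the paper simply takes $u_0(x)=v_0(y(x))$ with $v_0$ a trigonometric polynomial, so that $\bbE\|v_0\|_{L^3(\bbT^P)}^3<\infty$ trivially and \eqref{e5.DV} applies.
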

\begin{proof}
  We suppose $u_0(x)=v_0(y(x))$ is a trigonometric polynomial and we let $v$ be the corresponding kinetic periodic solution  on 
 $\R^P$, with initial datum $v(0,y)=v_0(y)$,
 as in the discussion of Section~\ref{S:4}.
 Also, assume that
 $\phi\in \B_b(L^1(\bbG_{*N}))$ has support in $S_R^C=L^1(\bbG_{*N})\setminus S_R$, where $S_R$ is as above. 
 
 Thus, we have
 \begin{multline*}
 |\la  \mu_T,\phi\ra|=| \frac1T\int_0^T P_t\phi (u_0) dt|=| \frac1T\int_0^T \bbE\phi (u(t)) dt|\le \|\phi\|_\infty\frac1{R} \frac1T
 \bbE\int_0^T \|u(t)\|_S\,dt\\
 = \|\phi\|_\infty\frac1{R}\frac1T \bbE\int_0^T \|v(t)\|_{W^{s,q}(\bbT^P)}\,dt
 \le \frac{C\|\phi\|_\infty}{R},
 \end{multline*} 
where we have used Corollary~\ref{C:newcor} and \eqref{e5.DV}, which proves the desired tightness of $\mu_T$, $T>0$, and so, by Prohorov's therorem, there exists a subsequence $\{\mu_{T_k}\}_{k\in\N}$ and a probability measure $\mu$ over $L^1(\bbG_{*N})$ such that   $\mu_{T_k}\wto \mu$.     
\end{proof}
\begin{proposition} Assume condition \eqref{e5.NDC} holds, with 
$\iota^\vartheta(\d)$ defined by \eqref{e5.NDC1}.
 The measure $\mu$ obtained in Proposition~\ref{existence-m} is an invariant measure for the transition semigroup $P_t$.
\end{proposition}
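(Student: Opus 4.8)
The plan is to complete the Krylov--Bogoliubov argument begun in Proposition~\ref{existence-m}: having produced, along a subsequence $T_k\to\infty$, a weak limit $\mu$ of the Cesàro averages $\mu_{T_k}$, I will show that $P_s^*\mu=\mu$ for every $s\ge0$ by combining the Feller property of $(P_t)$ with the standard ``shift by $s$'' estimate for the averaged measures. The soft, measure-theoretic part of the argument is routine once the Feller property is in hand.

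First I would establish that $(P_t)$ is Feller on $L^1(\bbG_{*N})$, i.e.\ $P_t\phi\in C_b(L^1(\bbG_{*N}))$ whenever $\phi\in C_b(L^1(\bbG_{*N}))$. This is exactly where the work of Section~\ref{S:5} pays off: by Proposition~\ref{P:5.new} the $L^1(\bbG_{*N})$-semigroup solutions obey the genuine contraction $\int_{\bbG_N}|u^{u_{01}}(t)-u^{u_{02}}(t)|\,d\mm\le\int_{\bbG_N}|u_{01}-u_{02}|\,d\mm$ a.s., so if $u_{0n}\to u_0$ in $L^1(\bbG_{*N})$ then $u^{u_{0n}}(t)\to u^{u_0}(t)$ in $L^1(\bbG_{*N})$ $\bbP$-a.s.; continuity of $\phi$ together with dominated convergence then yields $P_t\phi(u_{0n})=\bbE\phi(u^{u_{0n}}(t))\to\bbE\phi(u^{u_0}(t))=P_t\phi(u_0)$, and boundedness $\|P_t\phi\|_\infty\le\|\phi\|_\infty$ is immediate. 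One should also note in passing that $t\mapsto P_t\phi(u_0)$ is Borel measurable, so the averaged measures $\mu_T$ are well defined; this is standard from the construction of the solutions.

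Next I would record the semigroup identity $P_{t+s}=P_tP_s$, which follows from the a.s.\ uniqueness of the $L^1(\bbG_{*N})$-semigroup solution together with the Markov structure built into its construction, and use it to write $P_s^*\mu_T=\frac1T\int_s^{T+s}P_t^*\d_{u_0}\,dt$. Testing against $\phi\in C_b(L^1(\bbG_{*N}))$ gives $\bigl|\la P_s^*\mu_T-\mu_T,\phi\ra\bigr|\le \frac{2s\|\phi\|_\infty}{T}\to0$ as $T\to\infty$. Then, specializing to $T=T_k$ and passing to the limit, the Feller property ensures $P_s\phi\in C_b(L^1(\bbG_{*N}))$, so $\la\mu_{T_k},P_s\phi\ra\to\la\mu,P_s\phi\ra=\la P_s^*\mu,\phi\ra$ and $\la\mu_{T_k},\phi\ra\to\la\mu,\phi\ra$; combined with the shift estimate this gives $\la P_s^*\mu,\phi\ra=\la\mu,\phi\ra$ for all $\phi\in C_b(L^1(\bbG_{*N}))$ and all $s\ge0$. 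Since $L^1(\bbG_{*N})\cong L^1(\bbT^P)$ is a separable metric space, a probability measure on it is determined by its integrals against bounded continuous functions, so $P_s^*\mu=\mu$, which is the assertion.

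The step I expect to be the main obstacle is the Feller property, or more precisely the verification that the flow $u_0\mapsto u^{u_0}(t)$ is a.s.\ continuous from $L^1(\bbG_{*N})$ into itself and that this continuity survives the expectation $\bbE\phi(\cdot)$. The continuity itself is handed to us by the contraction property of Proposition~\ref{P:5.new}, which is valid with constant $1$ precisely because we have restricted to spectra contained in the finitely generated group $\G_\Lambda$ and reduced to the periodic torus $\bbT^P$; pushing the limit through the expectation is then just dominated convergence for bounded continuous $\phi$. Everything else --- the shift estimate and the weak-limit bookkeeping --- is the standard part of Krylov--Bogoliubov and requires no new input beyond the tightness already proved under condition \eqref{e5.NDC}.
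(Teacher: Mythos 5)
Your proposal is correct and follows essentially the same route as the paper: the standard Krylov--Bogoliubov argument, writing $\la P_t^*\mu,\phi\ra=\la\mu,P_t\phi\ra=\lim_k\la\mu_{T_k},P_t\phi\ra$, invoking the semigroup identity $P_sP_t=P_{s+t}$, and killing the boundary terms $\frac1{T_k}\int_0^t$ and $\frac1{T_k}\int_{T_k}^{T_k+t}$ by boundedness of $\phi$. The only difference is presentational: you make explicit the Feller property (deduced from the contraction in Proposition~\ref{P:5.new} plus dominated convergence), which the paper uses implicitly when it passes the weak limit through $P_t\phi$.
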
  
\begin{proof}   
For any $t\ge0$ and $\phi\in C_b(L^1(\bbG_{*N}))$, we have
 \begin{multline*}
 \la P_t^* \mu,\phi\ra =\la \mu, P_t\phi\ra =\lim_{n\to\infty}\la \mu_{T_n}, P_t\phi\ra\\
 =\lim_{n\to\infty}\frac1{T_n}\int_0^{T_n}P_s (P_t\phi)(u_0)\,ds= \lim_{n\to\infty}\frac1{T_n}\int_0^{T_n}
 P_{t+s}\phi (u_0) \,ds\\
 =\lim_{n\to\infty}\frac1{T_n}\int_t^{T_n+t} P_s\phi(u_0)\,ds=  \lim_{n\to\infty}\frac1{T_n}\int_0^{T_n} P_s\phi(u_0)\,ds\\
 -\lim_{n\to\infty}\frac1{T_n}\int_0^{t} P_s\phi (u_0)\,ds+\lim_{n\to\infty}\frac1{T_n}\int_{T_n}^{T_n+t} P_s\phi (u_0)\,ds\\
 = \lim_{n\to\infty}\frac1{T_n}\int_0^{T_n} P_s\phi(u_0)\,ds=\la \mu,\phi\ra,
 \end{multline*}
 which proves that $\mu$ is an invariant measure for \eqref{e1.1}. 
 \end{proof}

The just described procedure to obtain an invariant probability measure
 follows the classical Krylov-Bogolyubov method as described, e.g., in \cite{DZ}.


  Let us consider  the case where the initial datum is a trigonometric polynomial and $v$ is a solution of 
 the corresponding periodic problem such that $u(t,x)=v(t,y(x))$, as in Theorem~\ref{T:4.2}.
 Observe that, from condition \eqref{e5.NDC}, with 
$\iota^\vartheta(\d)$ defined by \eqref{e5.NDC1},
 then
 the flux function $\tilde \bff(v)=(\l_1\cdot \bff(v),\cdots,\l_P\cdot \bff(v))$ and the viscosity matrix
 $\tilde \abf(v)=(\tilde \s(v))^T\tilde \s(v)$, with $\tilde \s(v)$ being the $N\X P$ matrix written by its columns $\tilde \s(v)=[\tilde \s_1(v),\cdots,\tilde \s_P(v)]$,  $\tilde \s_i(v)=\l_i\s$, $i=1,\cdots,P$,  
  satisfy the condition, for $\tilde b=\tilde \bff'$ and
 $\tilde \iota^\vartheta(\ve)$  defined by
 \begin{equation}\label{e5.NDC2} 
\tilde\iota^\vartheta(\ve)=\sup_{\a\in\R,{\bf n}\in \Z^P}
 \int_\R\frac{\ve(\tilde \abf(\xi):\frac{\bf n}{|\bf n|}\otimes\frac{\bf n}{|\bf n|}+\ve)}
 {(\tilde \abf(\xi):\frac{\bf n}{|\bf n|}\otimes\frac{\bf n}{|\bf n|}+\ve)^2+\ve^{\nu}| \tilde b(\xi)\cdot \frac{\bf n}{|\bf n|}+\a|^2} \vartheta(\xi)\, d\xi,
 \end{equation}
 with $\vartheta$ as in \eqref{e5.NDC1}, 
\begin{equation}\label{e5.NDC3}
 \tilde\iota^\vartheta(\ve)\le c_1^\vartheta\ve^\k,
 \end{equation}
 for some $c_1^\vartheta>0$, $1<\nu<2$ and $0<\k<1$.

 We point out that the non--degeneracy condition \eqref{e5.NDC3} is a little different from the one in \cite{CPa}. Besides the fact that it is based on $\Z^P$, not on $\R^P$, as in \cite{CPa}, we introduce here the function $\vartheta$ in the definition of $\iota^\vartheta$ in \eqref{e5.NDC2}. This is necessary as, with our assumptions of Lipschitz continuity of the flux function $\mathbf{f}$ and of the viscosity matrix $\Abf$, the integral in \eqref{e5.NDC2} cannot converge without the presence of a weight function like $\vartheta$. However, we can still deduce estimate \eqref{e5.DV} with some modifications in the proof in \cite{CPa} as explained  in the appendix \ref{s:A}. 
 
 The following result establishes the uniqueness of the invariant measure. 

\begin{proposition}\label{P:Final} Assume that condition \eqref{e5.NDC} holds, with 
$\iota^\vartheta(\d)$ defined by \eqref{e5.NDC1}.
 Then the invariant measure $\mu$ of the transition semigroup $P_t$ is unique.   
\end{proposition}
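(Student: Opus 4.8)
The plan is to combine the $L^1$-mean contraction property of Proposition~\ref{P:5.new} with the compactness of the sets $S_R$ in $L^1(\bbG_{*N})$, using the standard ergodic argument for uniqueness of invariant measures à la Krylov--Bogolyubov/Doob, exactly in the spirit of \cite{DV2} and \cite{CPa}. First I would recall that, by Proposition~\ref{P:5.new}, for any two initial data $u_{01},u_{02}\in L^1(\bbG_{*N})$ the corresponding semigroup solutions satisfy, a.s.\ and for a.e.\ $t>0$,
\[
\int_{\bbG_N}|u^{u_{01}}(t)-u^{u_{02}}(t)|\,d\mm\le \int_{\bbG_N}|u_{01}-u_{02}|\,d\mm,
\]
whence, taking expectations, $P_t$ is a (weak) contraction for the $L^1(\bbG_{*N})$ (equivalently $N_1$) metric: $\|P_t\phi\|_{\Lip}\le\|\phi\|_{\Lip}$ for Lipschitz $\phi$, and more precisely the Wasserstein-$1$ distance $\mathcal{W}_1$ on probability measures over $L^1(\bbG_{*N})$ is non-expanding under $P_t^*$.

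Next, suppose $\mu,\tilde\mu$ are two invariant measures for $P_t$. The key step is to show they coincide, and here the non-degeneracy condition \eqref{e5.NDC} (via \eqref{e5.NDC3} and the regularity estimate \eqref{e5.DV}) enters decisively: it forces any invariant measure to be supported, in a suitable averaged sense, on the compact sets $S_R$. Concretely, for an invariant measure $\nu$ one has $\la\nu,\phi\ra=\la\nu,P_t\phi\ra=\frac1T\int_0^T\la\nu,P_s\phi\ra\,ds$, so by the tightness estimate already proved in Proposition~\ref{existence-m} (the bound $|\la\mu_T,\phi\ra|\le C\|\phi\|_\infty/R$ for $\phi$ supported off $S_R$, which used \eqref{e5.DV}), one deduces $\nu(S_R^C)\le C/R$ for every invariant measure $\nu$, with the \emph{same} constant $C$ depending only on the data. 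Thus both $\mu$ and $\tilde\mu$ put mass $\ge 1-C/R$ on $S_R$. Combining this with the contraction property, one shows $\mathcal{W}_1(P_t^*\mu,P_t^*\tilde\mu)\to 0$: split the coupling into the part where both coordinates lie in $S_R$ (there one uses the contraction together with the fact that, on the compact metric space $S_R$, the semigroup is asymptotically stable — this is where one needs an irreducibility/strong-Feller-type input or, alternatively, the argument of \cite{DV2,CPa} exploiting that the difference of two solutions is non-increasing and that, because the noise has zero spatial mean, the spatial means of the solutions are driven to a common value, letting one average out) and the tail part, of mass $O(1/R)$. Since $\mu,\tilde\mu$ are invariant, $\mathcal{W}_1(\mu,\tilde\mu)=\mathcal{W}_1(P_t^*\mu,P_t^*\tilde\mu)$ for all $t$, and letting first $t\to\infty$ and then $R\to\infty$ gives $\mathcal{W}_1(\mu,\tilde\mu)=0$, i.e.\ $\mu=\tilde\mu$.

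More carefully, I would follow the Cesàro/coupling scheme of \cite{CPa}: fix two invariant measures $\mu_1,\mu_2$; for $u_{01},u_{02}$ sampled independently from $\mu_1,\mu_2$, the process $t\mapsto N_1(u^{u_{01}}(t)-u^{u_{02}}(t))$ is a.s.\ non-increasing, hence converges a.s.\ to some limit $\ell\ge 0$; one must show $\ell=0$ with positive probability (indeed a.s.), and for this one uses that, thanks to \eqref{e5.DV}, the time-averaged laws of $u^{u_{0i}}(t)$ are tight on the compact $S_R$, so along a subsequence $t_k\to\infty$ one gets convergence of $(u^{u_{01}}(t_k),u^{u_{02}}(t_k))$ in $L^1(\bbG_{*N})\times L^1(\bbG_{*N})$ to some pair with the same $N_1$-distance $\ell$; then a small additional argument — exploiting that the zero-mean condition $\Medint g_k=0$ keeps the spatial means equal and that strict contraction occurs unless the two solutions agree — shows $\ell=0$. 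Consequently $P_{t}^*(\mu_1\otimes\mu_2)$-pushed marginals merge, forcing $\int\phi\,d\mu_1=\int\phi\,d\mu_2$ for all bounded Lipschitz $\phi$, hence $\mu_1=\mu_2$. Finally, the existence part is already established, so uniqueness completes Theorem~\ref{T:1.1}.

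The main obstacle I anticipate is precisely the step showing that the non-increasing quantity $N_1(u^{u_{01}}(t)-u^{u_{02}}(t))$ actually tends to $0$ rather than to some positive constant: the contraction \eqref{e5.4AP'} alone is not strict, so one genuinely needs the regularity/compactness coming from \eqref{e5.DV} (hence from the non-degeneracy assumption \eqref{e5.NDC}) together with the zero-mean condition on the noise, to rule out a persistent gap. This is the same delicate point as in \cite{DV2,CPa}, and transporting their argument through the isometry ${\mathfrak Y}:L^1(\bbT^P)\to L^1(\bbG_{*N})$ of Lemma~\ref{L:5.elem} — so that the whole discussion is reduced to the periodic torus $\bbT^P$, where the estimate \eqref{e5.DV} lives — is the technical heart of the proof; everything else (tightness, the Cesàro identities, the contraction) is already in place from the preceding sections.
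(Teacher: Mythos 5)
Your overall skeleton --- reduce to the torus through the isometry of Lemma~\ref{L:5.elem}, combine the contraction \eqref{e5.4AP'} with the tightness coming from \eqref{e5.DV}, and conclude by a Ces\`aro/Krylov--Bogolyubov argument --- matches the paper's. But the decisive analytic step is left open in your proposal: you correctly identify that one must show the non-increasing quantity $N_1(u^{u_{01}}(t)-u^{u_{02}}(t))$ tends to $0$, yet the mechanisms you offer to close it (``strict contraction occurs unless the two solutions agree'', or an irreducibility/strong-Feller-type input) are neither available nor established anywhere in the paper: \eqref{e5.4AP'} is only a weak ($1$-Lipschitz) contraction, no strict contraction, strong Feller or irreducibility property is proved, and compactness of $S_R$ alone does not rule out a persistent positive gap $\ell>0$. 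The paper closes exactly this step by a different (and much shorter) route: the reduced periodic solutions are kinetic solutions in the sense of \cite{CPa} (the regularity estimate \eqref{e5.DV} being justified under \eqref{e5.NDC} in Appendix~\ref{s:A}), so the long-time merging $\lim_{t\to\infty}\|v^1(t)-v^2(t)\|_{L^1(\bbT^P)}=0$ is quoted directly from the last equation of Section~5 of \cite{CPa}, transferred to $L^1(\bbG_{*N})$ via the isometry to give \eqref{e5.9}, and then uniqueness follows by testing an arbitrary invariant measure $\nu$ against Lipschitz $\phi$ and comparing with the Ces\`aro averages $\mu_{T_n}$ built from one fixed initial datum $u_0$; no coupling of two invariant measures is needed, and the extension from Lipschitz $\phi$ to Borel sets is done by regularity of the measures.

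A second, more technical flaw: your claim that every invariant measure $\nu$ satisfies $\nu(S_R^C)\le C/R$ ``with the same constant depending only on the data'' does not follow from Proposition~\ref{existence-m}. The constant there comes from \eqref{e5.DV} and involves $\bbE\|v_0\|_{L^3(\bbT^P)}^3$ for the fixed (trigonometric polynomial) initial datum; an arbitrary invariant measure need not have finite third moments, so the uniform tightness of $\{P_t^*\nu\}$ that your Wasserstein/coupling scheme relies on is unjustified (and $\mathcal{W}_1$ itself may be infinite without first-moment bounds). The paper's argument, which fixes a single nice $u_0$, uses the a.s.\ merging \eqref{e5.9} together with the contraction to handle general data drawn from $\nu$, sidesteps this issue entirely.
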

 \begin{proof}
 The periodic kinetic solutions of \eqref{e5.1}-\eqref{e5.1'} are  kinetic solutions in the sense of \cite{CPa} and so,
any two of these solutions satisfy (by the last equation in section~5 of \cite{CPa})
\begin{equation}\label{e5.8}
\lim_{t\to\infty}\|v^1(t)-v^2(t)\|_{L^1(\bbT^P)}=0.
\end{equation}
{}From \eqref{e5.8} we obtain, for any two  $\BAP$-solutions with trigonometric polynomials as initial data and  trigonometric polynomials as noise coefficients in $\AP_*(\R^N)$, the equation   
\begin{equation}\label{e5.9}
\lim_{t\to\infty}\|u^1(t)-u^2(t)\|_{L^1(\bbG_N)}=0.
\end{equation}
 This, together with the contraction property \eqref{e2.4AP},  implies the uniqueness of the invariant measure.  Indeed, given $\phi\in \Lip(L^1(\bbG_{*N}))$ and $u_0\in \AP_*(\R^N)$, if $\mu$ is the invariant measure constructed by the above  Krylov-Bogoliubov's  argument, that is $\mu=\lim_{T_k\to\infty} \mu_{T_k}$, where $\mu_{T}=\frac1T\int_0^T P_t^*\d_{u_0}\,dt$, 
 we have 
 \begin{multline*}
 |\la\nu,\phi\ra-\la\mu_{T_n},\phi\ra|=\frac1{T_n}\int_0^{T_n}|\la P_t^*\nu,\phi\ra-\la P_t^*\d_{u_0},\phi\ra|\,dt\\
= \frac1{T_n}\int_0^{T_n}\left| \int_{L^1(\bbG_{*N})} (P_t\phi( v_0)-P_t\phi(u_0))\,d\nu(v_0) \right|\,dt\\
 \le  \frac1{T_n}\int_0^{T_n}\int_{L^1(\bbG_{*N})} C_\phi\, \bbE\| v(\cdot,t)- u(\cdot,t)\|_{L^1(\bbG_{*N})}\,d\nu(v_0) \,dt\\
\le \int_{L^1(\bbG_{*N})}  C_\phi \,\bbE\frac1{T_n}\int_0^{T_n} \| v(\cdot,t)-u(\cdot,t)\|_{L^1(\bbG_{*N})}\,dt\,d\nu(v_0)\to 0, 
\end{multline*}
where $v(\cdot,t),u(\cdot,t)$ are the $L^1(\bbG_{*N})$-entropy solutions associated with the initial data $v_0,u_0$, respectively. 
Hence, making $T_n\to\infty$, using \eqref{e5.9},  we conclude
$$
 |\la\nu,\phi\ra-\la \mu, \phi\ra|=0,
 $$
 and so
 \begin{equation}\label{e50.uni}
 \la\nu,\phi\ra=\la \mu, \phi\ra
 \end{equation}
 for all $\phi\in\Lip(L^1(\bbG_{*N}))$. Now, it is easy to extend \eqref{e50.uni} to all 
 $\phi\in\B_b(L^1(\bbG_{*N}))$: First for $\phi={\bf1}_F$ where $F$ is any closed subset of $L^1(\bbG_{*N})$ and then, by the regularity of the probability measures $\mu$ and $\nu$,  for 
 $\phi={\bf 1}_A$,  for any Borel set $A$, that is,  $\nu(A)=\mu(A)$ for all Borel sets of $L^1(\bbG_{*N})$, which  implies the uniqueness of the invariant measure  $\mu$ for \eqref{e1.1}.     
 \end{proof}

\appendix
 
\section{Regularity}\label{s:A}

As mentioned in Section~\ref{S:6} we need to assume a non-degeneracy condition, namely \eqref{e5.NDC3}, that differs slightly from the one in \cite{CPa}, which, however, still yields the regularity estimates that they prove, with only a few minor modifications in order to accommodate the weight function $\vartheta(\xi)=(1+|\xi|^2)^{-1}$. Indeed, following the proof of theorem~4.1 in \cite{CPa}, it suffices to make a small modification on the estimates on the term $u^0$ and $u^\flat$ of their decomposition of the periodic kinetic solution (see equation (4.7) in \cite{CPa}) of the parabolic-hyperbolic equation that they consider. Since the estimates on both $u^0$ and $u^\flat$ are similar, we only point out the changes on the first one.

The non--degeneracy condition comes into play on page 982, when estimating the term $\widehat{u^0}(k,t)$, where the Cauchy-Schwarz inequality is used in order to make appear the integral that defines the function $\eta(\lambda)$ (cf. condition (4.1) in their paper), which corresponds to the function $\iota^\vartheta$ in \eqref{e5.NDC2} above. At this point, it suffices to multiply and divide by $\vartheta(\xi)^{-1/2}$ (i.e. by $(1+|\xi|^2)^{1/2}$) before applying the Cauchy-Schwarz inequality as shown below, in order to make appear the function $\iota^\vartheta$, instead of their function $\eta$:
\begin{align*}
\int_0^T|\widehat{u^0}(t,k)|^2 dt &= \frac{4}{|k|}\int_{-\infty}^\infty\left|\int\frac{\mathcal{A}|k|+\omega_k}{(\mathcal{A}|k|+\omega_k)^2+|F'(\xi)\cdot\widehat{k}+\tau|^2}\widehat{\chi^u}(\xi,k,0)\, d\xi\right|^2\, d\tau\\
 &\le \frac{4}{|k|}\int_{-\infty}^\infty\left(\int|\widehat{\chi^u}(\xi,k,0)|^2\vartheta(\xi)^{-1}\frac{\mathcal{A}|k|+\omega_k}{(\mathcal{A}|k|+\omega_k)^2+|F'(\xi)\cdot\widehat{k}+\tau|^2}\, d\xi\right)\\
&\quad \times\left(\int \frac{\mathcal{A}|k|+\omega_k}{(\mathcal{A}|k|+\omega_k)^2+|F'(\xi)\cdot\widehat{k}+\tau|^2}\vartheta(\xi) d\xi \right)\, d\tau\\
&\le \frac{4}{|k|\omega_k}\int\widehat{|\chi^u}(\xi,k,0)|^2\vartheta(\xi)^{-1}\left(\int_{-\infty}^\infty\frac{\mathcal{A}|k|+\omega_k}{(\mathcal{A}|k|+\omega_k)^2+|F'(\xi)\cdot\widehat{k}+\tau|^2}\, d\tau\right)d\xi\\
&\quad \times\sup_{\tau}\int \frac{\omega_k(\mathcal{A}|k|+\omega_k)}{(\mathcal{A}|k|+\omega_k)^2+|F'(\xi)\cdot\widehat{k}+\tau|^2}\vartheta(\xi) d\xi.\\
\end{align*}
 
Here, the rest of the argument in \cite{CPa} can be followed line by line, carrying the function $\vartheta(\xi)^{-1}$ multiplying $|\widehat{\chi^v}(0,k,\xi)|^2$ to deduce using the non--degeneracy condition that
\[
\int_0^T|k|^{1+\kappa}\omega_k^{1-\kappa}|\widehat{u^0}(k,t)|^2dt\le C\int |\widehat{\chi^v}(\xi,k,0)|^2\vartheta(\xi)d\xi,
\]
and summing over all frequencies $k$ yields
\[
\int_0^T\|u\|_{H_y^{(1-\alpha)\kappa+\alpha}}^2dt\le C(1+\|u_0\|_{L^3(\mathbb{T}^P)}),
\]
as in \cite{CPa}. 

As mentioned above, the same modification can be made to include the function $\vartheta$ in the estimate of $v^\flat$. In this case, we also have to use the integrability properties of the periodic kinetic solution, which is also an important point of the regularity analysis in \cite{GH}.  In summary, this is how we obtain \eqref{e5.DV}.


\begin{thebibliography}{999}

\bibitem{AFP} L.~Ambrosio, N.~Fusco, D.~Pallara. ``Functions of Bounded Variation and Free Discontinuity Problems''.
Clarendon Press-Oxford, 2000.


\bibitem{BVW1}
C.~Bauzet, G.~Vallet, and P.~Wittbold.
\newblock {\sl The {C}auchy problem for conservation laws with a multiplicative
  stochastic perturbation.}
\newblock {J. Hyperbolic Differ. Equ.}, 9(4):661--709, 2012.


\bibitem{BVW2} C. Bauzet, G. Vallet, P. Wittbold, {\sl A degenerate parabolic-hyperbolic Cauchy problem with a stochastic force.} J. Hyperbolic Differ. Equ. 12(3) (2015), 501–533.

\bibitem{B}  A.S.~Besicovitch, ``Almost Periodic Functions''. Cambridge University Press, 1932. 

\bibitem{BK}  M.~Bendahmane,  K.~Karlsen, {\sl Renormalized entropy solutions for quasi-linear anisotropic degenerate parabolic equations}. SIAM J.\ Math.\ Anal.\ {\bf36} (2004), No.~2, 405--422. 

\bibitem{Bi} P.~Billingsley. ``Convergence of Probability Measures'', John Wiley \& Sons, Inc., 2nd ed.\ 1999.

\bibitem{Ca} J.~Carrillo, {\sl Entropy solutions for  nonlinear degenerate problems}. Arch.\ Rat.\ Mech.\ Anal.\ {\bf147} (1999), 269--361. 

\bibitem{CDK}
G.-Q.~Chen, Q.~Ding, and K.H.~Karlsen, {\sl On nonlinear stochastic balance laws.} \newblock { Arch. Ration. Mech. Anal.}, 204(3):707--743, 2012.

\bibitem{CF0} G.-Q.~Chen, H.~Frid, {\sl Decay of entropy solutions of nonlinear conservation laws}. Arch.\ Rational Mech.\ Anal.\ {\bf146} (1999), No.2, 95--127. 
 
 \bibitem{CF1} G.-Q.~Chen, H.~Frid,  {\sl Divergence-measure fields and hyperbolic conservation laws}. Arch.\ Ration.\ Mech.\ Anal.\ {\bf147} (1999), no.\ 2, 89 --118. 

\bibitem{CF2} G-Q.~Chen, H.~Frid, {\sl On the theory of divergence-measure fields and its applications}.  Bol.\ Soc.\ Brasil.\ Mat.\ (N.S.) {\bf32} (2001), no. 3, 401--433. 


\bibitem{CF3} G.-Q.~Chen, H.~Frid, {\sl Extended divergence-measure fields and the Euler equations for gas dynamics}. Comm.\ Math.\ Phys.\ {\bf236} (2003), no. 2, 251--280. 



\bibitem{CK} G.-Q.~Chen, K.H.~Karlsen, {\sl Quasilinear anisotropic degenerate parabolic equations with time-space dependence diffusion coefficients}. Communications in Pure and Applied Analysis.  4(2) (2005), 241--266.

\bibitem{CPa} G.-Q.~Chen, P.H.C.~Pang, {\sl Invariant measures for nonlinear conservation laws driven by stochastic forcing}. Chin.\ Ann.\ Math.\ Ser.\ B {\bf 40}(6) (2019), 967--1004.   

\bibitem{CPa2} G.-Q.~Chen, P.H.C.~Pang, {\sl Nonlinear anisotropic degenerate parabolic-hyperbolic equations with stochastic forcing}, J. Funct. Anal. {\bf 281} (2021), 109222.

\bibitem{CP} G.-Q.~Chen, B.~Perthame,\ {\sl Well-posedness for non-isotropic degenerate parabolic-hyperbolic equations.} Ann.\ l'I.H.P.\ Anal.\ Non-Lin\'eaires, {\bf20}(4) (2003), 645--668. 

\bibitem{CP2} G.-Q.~Chen, B.~Perthame, \ {\sl Large-time behavior of periodic entropy solutions to anisotropic degenerate parabolic-hyperbolic equations}. Proc.\ American Math.\ Soc.\ {\bf 137}, No.\ 9 (2009), 3003--3011.   

\bibitem{Da} C.M.~Dafermos,  ``Hyperbolic Conservation Laws in Continuum Physics'' (Third Edition). Springer-Verlag, Berlin, Heidelberg, 1999, 2005, 2010.

\bibitem{Da2} C.M.~Dafermos, {\sl Long time behavior of periodic solutions in several space dimensions}. SIAM J.\ Math.\ Anal.\ {\bf45} (2013), no.~4, 2064--2070.

\bibitem{DPZ}
G.~Da~Prato and J.~Zabczyk, \newblock {\em Stochastic equations in infinite dimensions}, 
volume 152 of { Encyclopedia of Mathematics and its Applications}.
\newblock Cambridge University Press, Cambridge, second edition, 2014.

\bibitem{DZ} G.~Da~Prato, J.~Zabczyk, ``Ergodicity for Infinite Dimensional Systems.'' Cambridge Univ.\ Press (1996). ISBN 0-521-57900-7. (Section 3).



\bibitem{DHV} A.~Debussche, M.~Hofmanov\'a, J.~Vovelle. {\sl Degenerate parabolic stochastic partial differential equations: the quasilinear case}. The Annals of Probability, {\bf 44}, No.~3 (2016), 1916--1955. 


\bibitem{DV2} A.~Debussche,  J.~Vovelle. {\sl Invariant measure of scalar first-order conservation law with stochastic forcing}. Prob.\ Theory Relat.\ Fields {\bf163} (2015), 575--611. 

\bibitem{DV} A.~Debussche,  J.~Vovelle. {\sl Scalar conservation laws with stochastic forcing}.  Journal of Functional Analysis {\bf259} (2010), 1014-1042.

\bibitem{DV'} A.~Debussche,  J.~Vovelle. {\sl Scalar conservation laws with stochastic forcing (revised version)}. https://arxiv.org/pdf/1001.5415.pdf.


\bibitem{DP1} R.J.~DiPerna. {\sl Convergence of approximate solutions to conservation laws.}
Arch.\ Rational Mech.\ Anal.\ {\bf 82} (1983), 27--70.

\bibitem{DP2} R.J.~DiPerna. {\sl Convergence of the viscosity method for isentropic gas dynamics}. Comm.\ Math.\ Phys.\ {\bf91} (1983), 1--30. 



\bibitem{DS}  N.~Dunford, J.T.~Schwartz, ``Linear Operators. General Theory, Part I ''. Interscience Publishers, Inc., New York,  1958, 1963.

\bibitem{EKMS} W.~E, K.~Khanin, A.~Mazel, Ya.~Sinai. {\sl Invariant measures for Burgers equation with stochastic forcing.} Ann.\ Math.\ (2) {\bf 151} (3) (2000), 877--960.  



\bibitem{EFM} C.~Espitia, H.~Frid, D.~Marroquin. {\sl Invariant measures for stochastic conservation laws with Lipschitz flux  in the space of almost periodic functions}. Submitted (2022).  

\bibitem{FN} J.~Feng and D.~Nualart.
\newblock {\em Stochastic scalar conservation laws}.
\newblock { J. Funct. Anal.}, 255(2):313--373, 2008.


\bibitem{Fr0} H.~Frid, {\sl Decay of almost periodic solutions of conservation laws}. Arch.\ Rational Mech.\ Anal. {\bf161} (2002), 43--64. 

\bibitem{FL} H.~Frid, Y.~Li, {\sl Asymptotic Decay of Besicovitch Almost Periodic Entropy Solutions to Anisotropic   Degenerate Parabolic-Hyperbolic Equations}. J.\ Differential Equations {bf 268} (2020), 
no.~9, 4998--5034.

\bibitem{FLMNZ} H.~Frid, Y.~Li, D.~Marroquin, J.~Nariyoshi, Z.~Zeng. {\sl The Strong Trace Property and the Neumann Problem for Stochastic Conservation Laws}. Stoch PDE: Anal Comp (2021). https://doi.org/10.1007/s40072-021-00215-9


\bibitem{FLMNZ2} H.~Frid, Y.~Li, D.~Marroquin, J.~Nariyoshi, Z.~Zeng. {\sl {The Dirichlet Problem for Stochastic Degenerate Parabolic-Hyperbolic Equations}}. Commun. Math. Anal. Appl. {\bf 1}, No.~1, (2022), 1--71.


\bibitem{GH} B.~Gess, M.~Hofmanov\'a. {\sl Well-posedness and regularity for a quasilinear degenerate parabolic-hyperbolic SPDE}. The Annals of Probability {\bf46}, No.~5,  (2018), 2495--2544. 

\bibitem{GS} B.~Gess, P.E.~Souganidis. {\sl Long-Time Behavior, Invariant Measures, and Regularizing Effects
for Stochastic Scalar Conservation Laws}. Communications on Pure and Applied Mathematics, Vol.~LXX,  (2017), 1562--1597.

\bibitem{GS2} B.~Gess, P.E.~ Souganidis. {\sl Stochastic non-isotropic degenerate parabolic-hyperbolic equations}. Stochastic Process.\ Appl.\ {\bf127} (2017), no.~9, 2961--3004.



\bibitem{GK} I.~Gy\"only, N.~Krylov. {\sl Existence of strong solutions for It\^o's stochastic equations via approximations}. Probability Theory Relat.\ Fields {\bf 105} (1996), 143--158.

\bibitem{Ha} M.~Hofmanov\'a. {\sl Degenerate parabolic stochastic partial differential equations}. Stochastic Process.\ Appl.\ {\bf123} (2013), 4294--4336.

\bibitem{Ho} E.~Hopf. {\sl The partial differential equation $u_t+uu_x=\mu u_{xx}$.} Comm.\ Pure App.\ Math.\ {\bf3} (1950), 201--230. 


\bibitem{KJ} J.U.~Kim. {\sl On a stochastic scalar conservation law}. Indiana University J.\ {\bf 52}, No.~1(2003),
227--256.   

\bibitem{Kr} S.N.~Kruzhkov,  {\sl First order quasilinear equations in several independent variables}.
Math.\ USSR-Sb.\ {\bf10} (1970), 217--243.

\bibitem{Lx} P.~Lax. {\sl Weak solutions of nonlinear hyperbolic equations}. Comm.\ Pure Appl.\ Math.\ {\bf7} (1954), 159--193. 


\bibitem{LPT} P.-L.~Lions, B.~Perthame, E.~Tadmor. {\sl Kinetic formulation of multidimensional scalar conservation laws and related equations.} J.~AMS {\bf 7} (1994), 169--191.  

\bibitem{LZ} B.M.~Levitan and V.V.~Zhikov. ``Almost Periodic Functions and Differential Equations''. 
Cambidge University Press, New York, 1982.

\bibitem{Loo} L.H.~Loomis,  ``An Introduction to Harmonic Analysis''. D.\ Van Nostrand Company, Inc., Toronto, New York, London, 1953.

\bibitem{MNRRO} J.~ M\'alek , J.~ Necas , M.~Rokyta , M.~Ruzicka and F.~ Otto. {\sl Scalar conservation laws}. In: Weak and measure-valued solutions to evolutionary PDEs / Josef M\'alek et al (eds.)
London : Chapman and Hall/CRS, 1996. pp 41--143 (Applied mathematics and mathematical computation ; 13)

\bibitem{Ol} O.A.~Oleinik. {\sl Discontinuous solutions of non-linear differential equations.}
Usp.\ Mat.\ Nauk {\bf 12} (1957), 3--73. English translation: AMS Translation, Ser.\ II, {\bf 26},
95--172.   


\bibitem{Pv0} E.Yu.~Panov. {\sl On decay of periodic entropy solutions to a scalar conservation law}. Ann. Inst. H. Poincar\'e Anal.\ Non Lin\'eaire 30 (2013) 997--1007.

\bibitem{Pv} E.Yu.~Panov. {\sl On the Cauchy problem for conservation laws in the class of Besikovitch almost periodic functions: Global well-posedness and decay property}. J.~Hyperbolic Differential Equations {\bf13} (2016), 633--659. 

 \bibitem{Pv2} E.Yu.~Panov. {\sl On some properties of entropy solutions of degenerate non-linear anisotropic parabolic equations}. J. Differential Equations {\bf275} (2021), 139--166.




\bibitem{Ot} F.~Otto. {\sl Initial-boundary value problem for a scalar conservation law}.
 Comptes rendus de l'Acad\'emie des Sciences 1, {\bf322} (1996) No.8, pp. 729--734.

\bibitem{PB}  B.~Perthame,  ``Kinetic formulations of parabolic and hyperbolic PDEs: from theory to numerics. Evolutionary equations''. Vol. I, 437--471, Handb.\ Differ.\ Equ., North-Holland, Amsterdam, 2004.

\bibitem{VH} A.I.~Vol'pert, S.I.~Hudjaev,  {\sl Cauchy's problem for degenerate second order quasilinear parabolic equations.} Math.\ USSR Sbornik, Vol.~7 (1969), No.~3, 365--387.

\bibitem{Ta} L.-C.~Tartar. {\sl Compensated compactness and applications to partial differential equations}. Nonlinear Analysis and Mechanics: Herriot-Watt Symposium, Vol.~IV, pp.~136--212, ed.~R.J.~Knops, London: Pitman, 1979. 



\end{thebibliography}
\end{document}